\newcommand{\splitt}{even}
\tikzset{cd/.style=matrix of math nodes,row sep=2em,column sep=2em, text height=1.5ex, text depth=0.5ex}
\tikzset{cdar/.style=->,auto}
\tikzset{mid/.style={anchor=mid}} 
\tikzset{dar/.style={double,double equal sign distance,-implies}}
\tikzset{narrowfill/.style={inner sep=1pt, fill=white}}
\newcommand*{\MRref}[2]{ \href{http://www.ams.org/mathscinet-getitem?mr=#1}{MR \textbf{#1}}}
\newcommand*{\arxiv}[1]{\href{http://www.arxiv.org/abs/#1}{arXiv: #1}}
\numberwithin{equation}{section}
\theoremstyle{plain}
\newtheorem{theorem}[equation]{Theorem}
\newtheorem{lemma}[equation]{Lemma}
\newtheorem{proposition}[equation]{Proposition}
\newtheorem{corollary}[equation]{Corollary}
\theoremstyle{definition}
\newtheorem{definition}[equation]{Definition}
\theoremstyle{remark}
\newtheorem{remark}[equation]{Remark}
\newtheorem{example}[equation]{Example}
\newcommand*{\C}{\mathbb C}
\newcommand*{\Z}{\mathbb Z}
\newcommand*{\R}{\mathbb R}
\newcommand*{\T}{\mathbb T}
\newcommand*{\id}{\textup{id}}
\newcommand*{\KK}{\textup{KK}}
\newcommand*{\XK}{\textup{XK}}
\newcommand*{\K}{\textup{K}}
\newcommand*{\E}{\textup{E}}
\newcommand*{\op}{\textup{op}}
\newcommand*{\ev}{\textup{ev}}
\newcommand*{\loc}{\textup{loc}}
\newcommand*{\Cst}{\textup C^*}
\newcommand*{\Cstar}{\texorpdfstring{$\textup C^*$\nb-}{C*-}}
\newcommand*{\Star}{\texorpdfstring{$^*$\nb-}{*-}}
\newcommand*{\Ab}{\mathfrak{Ab}} 
\newcommand*{\KKcat}{\mathfrak{KK}}
\newcommand*{\CONT}{\textup C} 
\newcommand*{\Abel}{\mathfrak A} 
\newcommand*{\Tri}{\mathfrak T}  
\newcommand*{\Ideal}{\mathfrak I}
\newcommand*{\Proj}{\mathfrak P} 
\newcommand*{\Repr}{\mathcal{R}} 
\newcommand*{\Boot}{\mathcal{B}}
\newcommand*{\nb}{\nobreakdash}  
\newcommand*{\Modc}[1]{\mathfrak{Mod}\bigl(#1\bigr)_\textup{c}}
\newcommand*{\Mod}[1]{\mathfrak{Mod}\bigl(#1\bigr)}
\newcommand*{\blank}{\text{\textvisiblespace}}
\DeclarePairedDelimiter{\gen}{\langle}{\rangle}
\DeclareMathOperator{\Ext}{Ext}
\DeclareMathOperator{\Hom}{Hom}
\DeclareMathOperator{\Ind}{Ind}
\DeclareMathOperator{\coker}{coker}
\DeclareMathOperator{\Prim}{Prim}
\newcommand*{\inOb}{\mathrel{\in\in}}
\newcommand*{\defeq}{\mathrel{\vcentcolon=}}
\newcommand*{\into}{\rightarrowtail}
\newcommand*{\prto}{\twoheadrightarrow}
\newcommand*{\lad}{\vdash}
\begin{document}

\title[Classify up to equivariant KK-equivalence]{A more general method to classify up to equivariant KK-equivalence}

\author{Rasmus Bentmann}
\email{rasmus.bentmann@mathematik.uni-goettingen.de}

\author{Ralf Meyer}
\email{rmeyer2@uni-goettingen.de}

\address{Mathematisches Institut\\
  Georg-August Universit\"at G\"ottingen\\
  Bunsenstra\ss{}e 3--5\\
  37073 G\"ottingen\\
  Germany}

\begin{abstract}
  Using a homological invariant together with an obstruction class in
  a certain \(\Ext^2\)-group, we may classify objects in triangulated
  categories that have projective resolutions of length two.  This
  invariant gives strong classification results for actions of the
  circle group on \(\Cst\)\nb-algebras, \(\Cst\)\nb-algebras over
  finite unique path spaces, and graph \(\Cst\)\nb-algebras with
  finitely many ideals.
\end{abstract}

\subjclass[2010]{Primary 46L35; Secondary 18E30, 19K35}

\thanks{The first author was partially supported by the Danish National Research
Foundation through the Centre for Symmetry and Deformation (DNRF92).}

\maketitle

\section{Introduction}
\label{sec:intro}

The \(\Cst\)\nb-algebra classification program aims at classifying
certain \(\Cst\)\nb-algebras up to isomorphism by suitable invariants.
Such a classification usually has two steps.  First, an isomorphism
between the invariants is lifted to an equivalence in a suitable
equivariant \(\KK\)-theory; then the latter is lifted to an
isomorphism.  These two steps are quite different in nature.  The
first is mainly algebraic topology, the second mainly analysis.  This
article deals with the first step of getting equivariant
\(\KK\)-equivalences from isomorphisms on suitable invariants.

The invariants used previously are homological functors -- variants of
\(\K\)\nb-theory.  There are, however, many situations where no
homological invariant is known that is sufficiently fine to detect
\(\KK\)\nb-equivalences.  This article introduces a more complex
invariant with two layers: the primary invariant is a homological
functor as usual, the secondary is a certain obstruction class, which
lives in an \(\Ext^2\)-group constructed from the primary invariant.
We took this idea from Wolbert~\cite{Wolbert:Classifying_K-modules}.
It goes back further to
Bousfield~\cite{Bousfield:K_local_at_add_prime}.

Our two-layer invariants are complete invariants up to
\(\KK\)\nb-equivalence in several new cases and shed light on previous
classification results for non-simple Cuntz--Krieger algebras and
graph algebras.  We explain how to classify arbitrary objects in the
bootstrap class in~\(\KK^\T\), where~\(\T\) is the circle group, and
in~\(\KK(X)\) for a finite unique path space~\(X\) (see
Definition~\ref{def:unique_path_space}).  The latter result is far
more general than previous ones in \cites{Meyer-Nest:Filtrated_K,
  Bentmann-Koehler:UCT}.  Furthermore, we deduce a classification
theorem up to stable isomorphism for purely infinite
graph \(\Cst\)\nb-algebras with finitely many ideals;
this contains the class of real-rank-zero Cuntz--Krieger algebras
classified by Restorff~\cite{Restorff:Classification}.
Our approach has the additional advantage that the resulting
classification result is strong, that is, every isomorphism on the
level of invariants lifts to an isomorphism of \(\Cst\)\nb-algebras;
this also leads to a classification theorem up to actual isomorphism
for the class of unital graph \(\Cst\)\nb-algebras as above.

Our method is based on homological algebra in triangulated
categories, see \cites{Meyer:Homology_in_KK_II,
  Meyer-Nest:Homology_in_KK}.  This starts with a homological
invariant on a triangulated category, which defines a homological
ideal as its kernel on morphisms.  The general theory gives
projective resolutions, derived functors, and a Universal
Coefficient Theorem for objects with a projective resolution of
length~\(1\); this implies that a certain universal homological
invariant -- in practice, this is often the one we started from --
is a complete invariant up to isomorphism in the triangulated
category.  Here we extend this method to also classify objects with
a projective resolution of length~\(2\): we find that objects in the
triangulated category with a given invariant are in bijection with a
certain \(\Ext^2\)-group computed from the given invariant.  Thus
the invariant together with an \(\Ext^2\)-class gives a complete
invariant.

We make this more concrete by the example of the triangulated
category~\(\KK^\T\) of \(\Cst\)\nb-algebras with a circle action.  Our
theorem classifies objects of the (\(\T\)\nb-equivariant) bootstrap
class in~\(\KK^\T\) up to
\(\KK^\T\)-equivalence.  Here the homological invariant is
\(\T\)\nb-equivariant \(\K\)\nb-theory~\(\K^\T_*\).  This is a functor
from~\(\KK^\T\) to the category of \(\Z/2\)\nb-graded, countable
modules over the commutative ring \(R=\Z[x,x^{-1}]\), the
representation ring of~\(\T\).  Generic \(R\)\nb-modules
have projective resolutions of length two, not one.  Hence there is no
Universal Coefficient Theorem in this case.  Let~\(M\) be a countable
\(\Z/2\)\nb-graded \(R\)\nb-module.  We show
\begin{enumerate}
\item there is a \(\T\)\nb-\(\Cst\)-algebra~\(A\) in the
  bootstrap class with \(\K^\T_*(A)\cong M\);
\item for such~\(A\), there is an invariant
  \(\delta(A)\in\Ext^2_R(M,M)^-\) such that
  \(\delta(A_1)=\delta(A_2)\) if and only if there is a
  \(\KK^\T\)\nb-equivalence \(A_1\to A_2\) inducing the identity map on
  \(M=\K^\T_*(A_i)\); here \(\Ext^2_R(M,M)^-\) denotes the odd part of
  the \(\Z/2\)\nb-graded group \(\Ext^2_R(M,M)\).
\end{enumerate}
In particular, if \(\Ext^2_R(M,M)^-=0\) then~\(M\) lifts uniquely to a
\(\T\)\nb-\(\Cst\)-algebra~\(A\) in the bootstrap class.

The above result does not yet finish the classification of
\(\T\)\nb-\(\Cst\)-algebras~\(A\) in the bootstrap class
because there may be isomorphisms \(A_1\to A_2\) that induce a
non-identity isomorphism \(M\to M\) on
\(M=\K^\T_*(A_1)=\K^\T_*(A_2)\).  The complete invariant takes values
in a category of pairs \((M,\delta)\), where~\(M\) is a countable,
\(\Z/2\)\nb-graded \(R\)\nb-module and \(\delta\in\Ext^2_R(M,M)^-\)
and where a morphism \((M_1,\delta_1) \to (M_2,\delta_2)\) is a
grading-preserving \(R\)\nb-module homomorphism \(f\colon M_1\to M_2\)
with \(\delta_2 f = f \delta_1\) in \(\Ext^2_R(M_1,M_2)^-\).  We show
that isomorphism classes in the bootstrap class
in~\(\KK^\T\) are in bijection with isomorphism classes in this
category of pairs.

Many purely infinite \(\Cst\)\nb-algebras carry a
gauge action by~\(\T\) by construction.  As examples of our
classification, we consider Cuntz--Krieger algebras and some
\(\Cst\)\nb-algebras constructed by Nekrashevych
in~\cite{Nekrashevych:Cstar_selfsimilar}.  In these cases,
\(\Ext^2_R(M,M)^-=0\), so that there is no obstruction class.

The above classification result is very efficient for
\emph{counting} isomorphism classes of objects in the bootstrap
class with a given \(\Z/2\)-graded \(R\)\nb-module~\(M\) as its
equivariant \(\K\)\nb-theory.  It may be hard, however, to compute
the obstruction class in~\(\Ext_R^2(M,M)^-\) for a given
\(\T\)\nb-\(\Cst\)-algebra~\(A\) with \(\K^\T_*(A)\cong M\).  At the
moment, we have no examples of non-equivalent
\(\T\)\nb-\(\Cst\)-algebras that are distinguished only by the
obstruction class.  The authors intend to provide adequate methods
for computing obstruction classes in future work.

Our next application concerns the bootstrap class in~\(\KK(X)\) for a
finite topological \(T_0\)\nb-space~\(X\),
see~\cite{Meyer-Nest:Bootstrap}.  Kirchberg's Classification Theorem
says that an equivalence in \(\KK(X)\) between two strongly purely
infinite, stable, separable, nuclear \(\Cst\)\nb-algebras with
primitive ideal space~\(X\) lifts to a \Star{}isomorphism, so
classification up to \(\KK(X)\)\nb-equivalence already implies
classification theorems up to isomorphism for suitable
\(\Cst\)\nb-algebras.  Previous classification results in \(\KK(X)\)
in~\cites{Meyer-Nest:Filtrated_K, Bentmann-Koehler:UCT} only apply
to very special~\(X\) because projective resolutions of length~\(1\)
are rare.

Invariants with enough projective resolutions of length~\(2\) are
more common.  If~\(X\) is a unique path space, then the
\(\K\)\nb-theories of the ideals corresponding to minimal
neighbourhoods of points in~\(X\) give an invariant with this
property.  This invariant is much smaller than filtrated
\(\K\)\nb-theory.  Since any object of the bootstrap class has a
projective resolution of length~\(2\), our new classification method
applies to arbitrary objects in the bootstrap class of~\(\KK(X)\)
for a unique path space~\(X\).

Even if~\(X\) is not a unique path space, our classification theorem
applies to objects in the bootstrap class in~\(\KK(X)\) that have
projective resolutions of length~\(2\).  We show that this is the case
for graph \(\Cst\)\nb-algebras with finitely many ideals.  Furthermore,
we compute the obstruction class of a graph algebra from the
Pimsner--Voiculescu type sequences that compute the \(\K\)\nb-theory
groups of its ideals.  Hence our complete invariant may be computed
effectively in this case.  We get a strong classification functor
up to stable isomorphism for purely infinite graph
\(\Cst\)\nb-algebras with finitely many ideals; strong classification
means that every isomorphism on the invariants lifts to a stable
isomorphism.  This is the first strong classification result -- even for
the class of purely infinite Cuntz--Krieger algebras -- without the
assumption of a specific ideal structure.  The invariant and its
computation are described in more detail in Section~\ref{sec:graph}.

Our abstract setup should also work in many other situations.  One
of them is connective \(\K\)\nb-theory, regarded as an invariant on
connective \(\E\)\nb-theory.  We refer to Andreas Thom's
thesis~\cite{Thom:Thesis} for details.
See~\cite{Dadarlat-McClure:When_are_two} for applications of
connective \(\K\)\nb-theory to \(\Cst\)\nb-algebras.  Another
instance is Kasparov's \(\KK\)\nb-theory for \(\Cst\)\nb-algebras
over a zero-dimensional compact metrisable space~\(X\).  Here the
\(\K\)\nb-theory of the total algebra has a natural module structure
over the ring of locally constant functions \(C(X,\Z)\).  This ring
has global dimension~\(2\)
by~\cite{Finn-Martinez-McGovern:Global_dimension_of_f-ring}*{Examples
  2.5(b)}.

For \(\Cst\)\nb-algebras over the unit interval and filtrated
\(\K\)\nb-theory as the invariant, the relevant Abelian category has
dimension~\(2\) once again.  So far, we cannot treat this example,
however, because there are not enough projective objects in this
case.

\subsection{Outline}
\label{sec:outline}
The structure of this article is as follows.
Section~\ref{sec:lift_twodim} develops the general theory of
obstruction classes.  Section~\ref{sec:KK_T} applies it to circle
actions on \(\Cst\)\nb-algebras, Section~\ref{sec:KK_X} to
\(\Cst\)\nb-algebras over unique path spaces, and
Section~\ref{sec:graph} to graph \(\Cst\)\nb-algebras; this includes
a return to general triangulated categories in order to compute
obstruction classes for objects of a specific type.

\subsection{Acknowledgement}
\label{sec:acknow}

We thank James Gabe and Rune Johansen for pointing out the
references \cite{Finn-Martinez-McGovern:Global_dimension_of_f-ring}
and~\cite{Boyle:Shift_equivalence_Jordan_from}, respectively;
Eusebio Gardella and N.~Christopher Phillips for discussions
on circle actions with the Rokhlin property, and Gunnar Restorff and
Efren Ruiz for discussions on the classification of graph
\(\Cst\)\nb-algebras and for remarks on an earlier version of the
paper.

\section{Lifting two-dimensional objects}
\label{sec:lift_twodim}

Throughout the article, we will freely use the language of homological
algebra in triangulated categories introduced
in~\cite{Meyer-Nest:Homology_in_KK}.

Let~\(\Tri\) be a triangulated category with at least countable
direct sums (so that idempotent morphisms split).  Let~\(\Ideal\) be
a stable homological ideal that is compatible with countable direct
sums and has enough projective objects.  Let \(F\colon
\Tri\to\Abel\) be the universal \(\Ideal\)\nb-exact stable
homological functor.  By \cite{Meyer-Nest:Homology_in_KK}*{Theorem
  57}, the category~\(\Abel\) has enough projective objects, the
adjoint functor~\(F^\lad\) of~\(F\) is defined on all projective
objects of~\(\Abel\), and \(F\circ F^\lad(A)\cong A\) for all
projective objects~\(A\) of~\(\Abel\).  Let
\(\gen{\Proj_\Ideal}\subseteq\Tri\) denote the localising
subcategory generated by the \(\Ideal\)\nb-projective objects.
\cite{Meyer:Homology_in_KK_II}*{Theorem 3.22} implies that
\(\hat{A}\inOb\gen{\Proj_\Ideal}\) if and only if
\(\Tri(\hat{A},B)=0\) for all \(\Ideal\)\nb-contractible objects
\(B\inOb\Tri\); we write~\(\inOb\) for objects of a category.

\begin{definition}
  \label{def:lifting}
  A \emph{lifting} of \(A\inOb\Abel\) is a pair \((\hat{A},
  \alpha)\) consisting of \(\hat{A}\inOb\Tri\) with
  \(\Tri(\hat{A},B)=0\) for all \(\Ideal\)\nb-contractible
  \(B\inOb\Tri\) and an isomorphism \(\alpha\colon F(\hat{A})
  \xrightarrow{\cong} A\).  An \emph{equivalence} between two
  liftings \((\hat{A}_1,\alpha_1)\), \((\hat{A}_2,\alpha_2)\) is an
  isomorphism \(\varphi\in \Tri(\hat{A}_1,\hat{A}_2)\) with
  \(\alpha_1 = \alpha_2\circ F(\varphi)\).  We often drop~\(\alpha\)
  from the notation and call~\(\hat A\) a lifting of~\(A\).
\end{definition}

If \(A\inOb\Abel\) is projective, then~\(F^\lad(A)\) with the
canonical isomorphism \(F\bigl(F^\lad(A)\bigr)\cong A\) is a
\emph{natural} lifting of~\(A\).

\begin{proposition}
  \label{pro:lift_one-dimensional}
  Let \(A\inOb\Abel\) have cohomological dimension~\(1\).
  Then~\(A\) has a lifting, and any two liftings are equivalent.
\end{proposition}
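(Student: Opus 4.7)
The plan is to reduce the claim to standard bookkeeping with the adjunction \(\Tri(F^\lad(P), X) \cong \Abel(P, F(X))\) for projective~\(P\), combined with the long exact sequences coming from exact triangles. For existence, I would first pick a projective resolution \(0 \to P_1 \xrightarrow{f} P_0 \xrightarrow{p} A \to 0\) in~\(\Abel\); this exists because~\(A\) has cohomological dimension~\(1\) and~\(\Abel\) has enough projectives. Using the adjunction, \(f\) lifts to a morphism \(\hat{f}\colon F^\lad(P_1) \to F^\lad(P_0)\) with \(F(\hat{f}) = f\). Complete \(\hat{f}\) to an exact triangle \(F^\lad(P_1) \xrightarrow{\hat{f}} F^\lad(P_0) \xrightarrow{g} \hat{A} \to \Sigma F^\lad(P_1)\); since the \(F^\lad(P_i)\) lie in \(\gen{\Proj_\Ideal}\) and that localising subcategory is triangulated, so does~\(\hat{A}\).

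Applying~\(F\) to the triangle produces a long exact sequence in~\(\Abel\) in which both \(F(\hat{f}) = f\) and \(F(\Sigma \hat{f})\) (the grading-shift of~\(f\), again injective) are monic. Injectivity of \(F(\Sigma \hat{f})\) forces the connecting map \(F(\hat{A}) \to F(\Sigma F^\lad(P_1))\) to vanish, and injectivity of~\(f\) identifies \(F(\hat{A})\) with \(\coker(f) = A\). Concretely, there is a unique isomorphism \(\alpha\colon F(\hat{A}) \xrightarrow{\cong} A\) with \(\alpha \circ F(g) = p\), so that \((\hat{A}, \alpha)\) is a lifting.

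For uniqueness, let \((\hat{A}_1, \alpha_1)\) be any other lifting. By the adjunction, the map \(\alpha_1^{-1} \circ p\colon P_0 \to F(\hat{A}_1)\) lifts to \(\psi_0\colon F^\lad(P_0) \to \hat{A}_1\) with \(F(\psi_0) = \alpha_1^{-1} \circ p\). Then \(F(\psi_0 \circ \hat{f}) = \alpha_1^{-1} \circ p \circ f = 0\) in \(\Abel(P_1, F(\hat{A}_1))\), and bijectivity of the adjunction forces \(\psi_0 \circ \hat{f} = 0\) in~\(\Tri\). Applying the cohomological functor \(\Tri(-, \hat{A}_1)\) to the triangle defining~\(\hat{A}\), this vanishing yields a factorisation \(\psi_0 = \varphi \circ g\) for some \(\varphi\colon \hat{A} \to \hat{A}_1\). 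Comparing \(\alpha_1 \circ F(\varphi) \circ F(g) = \alpha_1 \circ F(\psi_0) = p = \alpha \circ F(g)\) and cancelling the epimorphism~\(F(g)\) gives \(\alpha_1 \circ F(\varphi) = \alpha\).

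It remains to check that~\(\varphi\) is invertible. Completing it to an exact triangle \(\hat{A} \xrightarrow{\varphi} \hat{A}_1 \to C \to \Sigma \hat{A}\) and applying~\(F\) gives \(F(C) = 0\), since \(F(\varphi) = \alpha_1^{-1} \circ \alpha\) is an isomorphism; so~\(C\) is \(\Ideal\)-contractible. On the other hand \(C \inOb \gen{\Proj_\Ideal}\) by closure under cones, so by the characterisation of \(\gen{\Proj_\Ideal}\) recalled from \cite{Meyer:Homology_in_KK_II} in the excerpt we get \(\Tri(C, C) = 0\); hence \(\id_C = 0\) and \(C \cong 0\), so~\(\varphi\) is the required equivalence of liftings. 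The main pitfall I expect is not the construction of~\(\varphi\) itself but the verification that it is compatible with the specified~\(\alpha\) and~\(\alpha_1\), rather than merely being some isomorphism \(\hat{A} \to \hat{A}_1\) in~\(\Tri\); this is where the identification \(\alpha \circ F(g) = p\) and the surjectivity of~\(F(g)\) really have to be used.
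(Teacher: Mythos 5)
Your proof is correct, and the existence half is essentially the paper's (lift the resolution to an exact triangle of $\Ideal$-projective objects and check that $F$ turns the triangle into a short exact sequence). Where you diverge is in the uniqueness step. The paper invokes the Universal Coefficient Theorem of Meyer--Nest (Theorem~66) as a black box: $\hat{A}_1$ and~$\hat{A}_2$ both have projective resolutions of length~$1$, so the identity on $A$ lifts to some $f\in\Tri(\hat{A}_1,\hat{A}_2)$; one then shows the cone of $f$ vanishes exactly as you do. You instead build the comparison morphism $\varphi$ by hand: lift the projection $p\colon P_0\to A$ through $\hat{A}_1$ using the adjunction, observe $\psi_0\circ\hat f=0$ by the injectivity of the adjunction bijection on $F^\lad(P_1)$, and factor $\psi_0$ through the cone via the long exact sequence for $\Tri(\blank,\hat{A}_1)$. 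This is in effect a re-proof of the surjectivity half of the UCT in the specific case at hand, so the underlying mechanism is the same, but your version is more self-contained and makes transparent exactly where the length-$1$ hypothesis enters. It also forces you to track the compatibility with $\alpha$ and $\alpha_1$ explicitly (cancelling the epimorphism $F(g)$), which the paper leaves implicit in the phrase ``lifts the identity map on $A$'' -- that bookkeeping is correct as you wrote it. One cosmetic remark: your argument shows every lifting is equivalent to the constructed $\hat{A}$; uniqueness then follows by transitivity, which you could state explicitly.
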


\begin{proof}
  Let
  \[
  0 \to P_1\xrightarrow{\partial} P_0\to A \to 0
  \]
  be a projective resolution in~\(\Abel\).  Then
  \(F^\lad(\partial)\colon F^\lad(P_1)\to F^\lad(P_0)\) is an arrow
  in~\(\Tri\) with \(F\bigl(F^\lad(\partial)\bigr)\cong \partial\).
  Let~\(\hat{A}\) be
  the cone of~\(F^\lad(\partial)\).  Since~\(\partial\) is monic,
  \(F^\lad(\partial)\) is \(\Ideal\)\nb-monic.  Hence
  \(F\bigl(F^\lad(P_1)\bigr)\into F\bigl(F^\lad(P_0)\bigr)\prto
  F(\hat{A})\) is a short
  exact sequence, proving that \(F(\hat{A})\cong A\).  If~\(B\) is
  \(\Ideal\)\nb-contractible, then \(\Tri(F^\lad(P_j),B)
  \cong \Tri\bigl(P_j,F(B)\bigr)=0\) and hence \(\Tri(\hat{A},B)=0\)
  by the long exact sequence for \(\Tri(\blank,B)\).
  Hence~\(\hat{A}\) is a lifting of~\(A\).

  Let \(\hat{A}_1\) and~\(\hat{A}_2\) be liftings of~\(A\).  This
  includes a choice of isomorphisms \(F(\hat{A}_1)\cong A\) and
  \(F(\hat{A}_2)\cong A\).  The Universal Coefficient Theorem
  \cite{Meyer-Nest:Homology_in_KK}*{Theorem 66} applies to
  \(\Tri(\hat{A}_1,\hat{A}_2)\).  Hence there is
  \(f\in\Tri(\hat{A}_1,\hat{A}_2)\) that lifts the identity map
  on~\(A\) when we identify \(F(\hat{A}_1)\cong A\) and
  \(F(\hat{A}_2)\cong A\).  Since~\(f\) is an
  \(\Ideal\)\nb-equivalence, its cone~\(B\) is
  \(\Ideal\)\nb-contractible.  Thus \(\Tri_*(\hat{A}_i,B)=0\) for
  \(i=1,2\), and this implies \(\Tri_*(B,B)=0\) and hence \(B=0\) by
  the long exact sequence.  Thus~\(f\) is invertible.
\end{proof}

The equivalence between two liftings in
Proposition~\ref{pro:lift_one-dimensional} is not canonical, and
the lifting is not natural, unlike for projective objects.  The
Universal Coefficient Theorem
\cite{Meyer-Nest:Homology_in_KK}*{Theorem 66} only shows that any
arrow \(A_1\to A_2\) in~\(\Abel\) between objects of cohomological
dimension~\(1\) lifts to an arrow in~\(\Tri\).  But this lifting is
only unique up to \(\Ext^1(A_1,A_2[-1])\).  With parity assumptions as
in Section~\ref{sec:even_case}, there is a canonical lifting for any
arrow \(A_1\to A_2\): lift its even and odd parts separately and then
take the direct sum.  This shows that the UCT short exact sequence
splits under parity assumptions.  This splitting is not natural,
however.

Proposition~\ref{pro:lift_one-dimensional} implies that isomorphism
classes of objects in~\(\Abel\) of cohomological dimension~\(1\)
correspond bijectively to isomorphism classes of objects~\(A\)
in~\(\gen{\Proj_\Ideal}\) with \(F(A)\) of cohomological
dimension~\(1\).  This is used in \cites{Koehler:Thesis,
  Meyer-Nest:Filtrated_K, Bentmann-Koehler:UCT} and other
classification results.  It may, however, be very hard to find
computable invariants~\(F\) for which all objects in its image have
cohomological dimension~\(1\).

\begin{lemma}
  \label{lem:lift_length-two}
  Any \(A\inOb\Abel\) of cohomological dimension~\(2\) has a lifting
  in~\(\Tri\).
\end{lemma}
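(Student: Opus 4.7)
The plan is to mimic the construction in the proof of Proposition~\ref{pro:lift_one-dimensional} but with one extra step, using the cohomological dimension~\(1\) case as a building block.

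First, I would pick a projective resolution of length two
\[
0 \to P_2 \xrightarrow{\partial_2} P_1 \xrightarrow{\partial_1} P_0 \to A \to 0
\]
in~\(\Abel\), and split it into two short exact sequences at \(K\defeq \ker(P_0\to A)=\mathrm{image}(\partial_1)\):
\[
0 \to P_2 \xrightarrow{\partial_2} P_1 \to K\to 0,\qquad
0 \to K\xrightarrow{\iota} P_0\to A\to 0.
\]
The first of these exhibits~\(K\) as an object of~\(\Abel\) of cohomological dimension at most~\(1\), so Proposition~\ref{pro:lift_one-dimensional} furnishes a lifting~\(\hat{K}\) together with an identification \(F(\hat{K})\cong K\).

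Next, the natural lifting of the projective object~\(P_0\) is~\(F^\lad(P_0)\). The plan is to lift the inclusion~\(\iota\colon K\to P_0\) to a morphism \(\hat\iota\colon \hat{K}\to F^\lad(P_0)\) in~\(\Tri\), and then define~\(\hat{A}\) to be the mapping cone of~\(\hat\iota\). To produce~\(\hat\iota\), apply the Universal Coefficient Theorem \cite{Meyer-Nest:Homology_in_KK}*{Theorem 66} to \(\Tri(\hat{K},F^\lad(P_0))\): this is legitimate because~\(\hat{K}\) satisfies the condition in Definition~\ref{def:lifting} and because \(F(F^\lad(P_0))\cong P_0\); it shows that the map
\[
\Tri(\hat{K},F^\lad(P_0))\to \Hom_\Abel(K,P_0)
\]
is surjective, so a lift~\(\hat\iota\) of~\(\iota\) exists.

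Finally I would check that \(\hat{A}\defeq \mathrm{cone}(\hat\iota)\) is indeed a lifting of~\(A\). Since \(F(\hat\iota)=\iota\) is monic in~\(\Abel\), the morphism~\(\hat\iota\) is \(\Ideal\)\nb-monic; hence the defining exact triangle yields a short exact sequence
\[
0\to F(\hat{K}) \to F(F^\lad(P_0)) \to F(\hat{A}) \to 0
\]
which identifies with \(0\to K\to P_0\to A\to 0\), so \(F(\hat{A})\cong A\). For any \(\Ideal\)\nb-contractible~\(B\), the long exact sequence from the triangle together with \(\Tri(\hat{K},B)=0\) (by choice of~\(\hat K\)) and \(\Tri(F^\lad(P_0),B)\cong \Ab(P_0,F(B))=0\) (by adjunction) gives \(\Tri(\hat{A},B)=0\). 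The main step requiring care is ensuring that the UCT is available to lift~\(\iota\); once that is in place, the construction and verification are essentially formal, and the key structural fact that makes it go through is that the class of objects of cohomological dimension~\(\le 1\) is closed under taking kernels of surjections from projectives whenever the original object had dimension~\(\le 2\).
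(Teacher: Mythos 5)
Your proposal is correct and follows essentially the same route as the paper's proof: split the length-two resolution at the syzygy (your~\(K\) is the paper's~\(\Omega A\)), lift the syzygy via Proposition~\ref{pro:lift_one-dimensional}, lift the inclusion into~\(F^\lad(P_0)\) using the Universal Coefficient Theorem, and take the mapping cone. The only cosmetic difference is that you verify \(\Tri(\hat{A},B)=0\) for \(\Ideal\)-contractible~\(B\) directly from the long exact sequence, whereas the paper notes that \(\hat{A}\) lies in \(\gen{\Proj_\Ideal}\) by construction and invokes the equivalence from \cite{Meyer:Homology_in_KK_II}*{Theorem 3.22}; both checks come to the same thing.
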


\begin{proof}
  Let
  \begin{equation}
    \label{eq:length-two-resolution}
    0 \to P_2 \xrightarrow{\partial_2} P_1 \xrightarrow{\partial_1} P_0
    \xrightarrow{\partial_0} A \to 0
  \end{equation}
  be an exact chain complex in~\(\Abel\) with projective
  \(P_0\), \(P_1\) and~\(P_2\).  Let
  \[
  \Omega A\defeq \partial_1(P_1) \cong  \coker \partial_2,
  \]
  so that we get short exact sequences
  \[
  P_2 \into P_1 \prto \Omega A,\qquad
  \Omega A\into P_0 \prto A.
  \]
  Since~\(\Omega A\) has a projective resolution of
  length~\(1\), it has a lifting \(D\inOb\Tri\) by
  Proposition~\ref{pro:lift_one-dimensional}.  Let
  \(\hat{P}_0\defeq F^\lad(P_0)\) be the canonical lifting
  of~\(P_0\).

  The Universal Coefficient Theorem
  \cite{Meyer-Nest:Homology_in_KK}*{Theorem 66} gives a short exact
  sequence
  \begin{equation}
    \label{eq:UCT_Cokerd2}
    \Ext^1(\Omega A[1],P_0) \into \Tri(D,\hat{P}_0) \prto
    \Hom(\Omega A,P_0).
  \end{equation}
  Hence the inclusion map \(\Omega A\hookrightarrow P_0\) lifts to
  some \(f\in\Tri(D,\hat{P}_0)\), which is \(\Ideal\)\nb-monic.  The
  mapping cone~\(\hat{A}\) of~\(f\) belongs
  to~\(\gen{\Proj_\Ideal}\) by construction, and has \(F(\hat{A})
  \cong P_0/\Omega A \cong A\) by the short exact
  sequence~\eqref{eq:length-two-resolution}, so it is a lifting
  of~\(A\).
\end{proof}

\cite{Meyer-Nest:Filtrated_K}*{Theorem 4.10} shows that liftings of
objects of cohomological dimension two cannot be unique in general.
We may, however, classify liftings up to equivalence:

\begin{theorem}
  \label{the:classify_liftings_two-dim}
  Let \(A\inOb\Abel\) have cohomological dimension~\(2\).  The set
  of equivalence classes of liftings of~\(A\) is in bijection with
  \(\Ext^2(A,A[-1])\).
\end{theorem}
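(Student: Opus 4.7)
My plan is to fix a projective resolution
\[
0 \to P_2 \xrightarrow{\partial_2} P_1 \xrightarrow{\partial_1} P_0 \xrightarrow{\partial_0} A \to 0,
\]
set $\Omega A \defeq \ker \partial_0$, pick a lifting $D \inOb \Tri$ of $\Omega A$ (unique up to equivalence by Proposition~\ref{pro:lift_one-dimensional} since $\Omega A$ has projective dimension~$1$), put $\hat P_0 \defeq F^\lad(P_0)$, and fix one morphism $f_0 \colon D \to \hat P_0$ lifting the inclusion $\iota \colon \Omega A \into P_0$. The UCT~\eqref{eq:UCT_Cokerd2} parametrises all lifts of $\iota$ as $f_0 + \xi$ with $\xi \in \Ext^1(\Omega A[1], P_0) = \Ext^1(\Omega A, P_0[-1])$, and the cone construction from the proof of Lemma~\ref{lem:lift_length-two} produces a lifting $\hat A_\xi$ of $A$ from each such choice. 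I first check that every lifting of $A$ arises this way: given $\hat A$, the adjunction $\Tri(\hat P_0, \hat A) \cong \Hom(P_0, A)$ supplies a unique $\pi \colon \hat P_0 \to \hat A$ lifting $\partial_0$, and the fibre of $\pi$ is a lifting of $\Omega A$, hence equivalent to $D$; fixing such an equivalence rewrites $\hat A$ as some $\hat A_\xi$.

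Next I would relate these parameters to $\Ext^2(A, A[-1])$ using the same resolution. Applying $\Hom(\blank, A[-1])$ to $\Omega A \into P_0 \prto A$ and using projectivity of $P_0$ gives an isomorphism $\Ext^1(\Omega A, A[-1]) \cong \Ext^2(A, A[-1])$. Applying $\Hom(\Omega A, \blank[-1])$ to the same sequence, and using that $\Omega A$ has projective dimension~$1$ so $\Ext^2(\Omega A, \blank) = 0$, yields an exact sequence
\[
\Ext^1(\Omega A, \Omega A[-1]) \xrightarrow{\iota_*} \Ext^1(\Omega A, P_0[-1]) \xrightarrow{q_*} \Ext^1(\Omega A, A[-1]) \to 0,
\]
where $q \colon P_0 \prto A$. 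Composing $q_*$ with the isomorphism to $\Ext^2(A, A[-1])$ defines a surjection $\delta \colon \xi \mapsto q_*(\xi)$ from parameters to $\Ext^2(A, A[-1])$.

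The crux is to show that $\hat A_\xi$ and $\hat A_{\xi'}$ are equivalent as liftings if and only if $\xi - \xi' \in \iota_*\bigl(\Ext^1(\Omega A, \Omega A[-1])\bigr)$, which means exactly $\delta(\xi) = \delta(\xi')$. For the ``if'' direction, any class $\eta \in \Ext^1(\Omega A, \Omega A[-1])$ corresponds by the UCT for $\Tri(D,D)$ to a phantom endomorphism of $D$ whose sum with $\id_D$ is an automorphism $\psi$ (its cone is $\Ideal$\nb-contractible, hence zero by the defining property of liftings); naturality of the UCT identifies the class of $(f_0 + \xi')(\psi - \id_D)$ in $\Ext^1(\Omega A, P_0[-1])$ with $\iota_*(\eta)$, so choosing $\eta$ with $\iota_*(\eta) = \xi - \xi'$ gives $(f_0 + \xi') \psi = f_0 + \xi$, whence pre-composition with $\psi$ is a triangle isomorphism whose induced map $F(\varphi)$ is $\id_A$ by naturality on the short exact sequence $\Omega A \into P_0 \prto A$. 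For the ``only if'' direction, an equivalence $\varphi \colon \hat A_\xi \to \hat A_{\xi'}$ satisfies $\varphi \pi_\xi = \pi_{\xi'}$ by uniqueness of the adjunction-lift of $\partial_0$, so TR3 applied to the rotated triangles produces $\alpha \colon D \to D$ with $(f_0 + \xi') \alpha = f_0 + \xi$; since $\iota$ is monic, $F(\alpha) = \id_{\Omega A}$, and the same naturality of the UCT rewrites $\xi - \xi' = (f_0 + \xi')(\alpha - \id_D)$ as $\iota_*(\eta)$ for the UCT class $\eta$ of $\alpha - \id_D$.

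The main obstacle is this last step: extracting $\alpha$ from $\varphi$ via TR3 on the rotated triangles and then verifying that composition of $f_0 + \xi'$ with the (possibly phantom) morphism $\alpha - \id_D$ interacts with the UCT precisely via $\iota_*$ rather than picking up higher corrections from $\xi'$. Once this naturality argument is in place, $\delta$ descends to a bijection between equivalence classes of liftings of $A$ and $\Ext^2(A, A[-1])$, exhibiting the set of liftings as a torsor under $\Ext^2(A, A[-1])$ after a choice of base-point $\hat A_0 = \hat A_{\xi = 0}$.
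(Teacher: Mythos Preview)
Your proposal is correct and follows essentially the same route as the paper's proof: both realise every lifting as the cone of a map \(D\to\hat P_0\) lifting the inclusion \(\iota\colon\Omega A\hookrightarrow P_0\), parametrise these maps by \(\Ext^1(\Omega A,P_0[-1])\) via the UCT, and then show that two parameters give equivalent liftings iff their difference lies in the image of \(\iota_*\colon\Ext^1(\Omega A,\Omega A[-1])\to\Ext^1(\Omega A,P_0[-1])\).

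Your flagged obstacle is not an obstacle, and the paper dispatches it in exactly the way you anticipate. The UCT short exact sequence for \(\Tri(D,\blank)\) is natural in the second variable with respect to morphisms in~\(\Tri\); applying this naturality to \(f_0+\xi'\colon D\to\hat P_0\), the induced map on the \(\Ext^1\)-subgroups is \(F(f_0+\xi')_*=\iota_*\), since \(F(\xi')=0\). Hence \((f_0+\xi')(\alpha-\id_D)=\iota_*(\eta)\) with no ``higher correction'' from~\(\xi'\): the would-be correction \(\xi'\circ(\alpha-\id_D)\) is governed by \(F(\xi')_*=0\). Equivalently, the product of two phantom classes lands in \(\Ext^2(\Omega A,P_0[-1])\), which vanishes because \(\Omega A\) has projective dimension~\(1\).
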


\begin{proof}
  Fix a length-two projective resolution of~\(A\) as
  in~\eqref{eq:length-two-resolution} and a lifting~\(\hat{A}\)
  of~\(A\), which exists by Lemma~\ref{lem:lift_length-two}.  Let
  \(\hat{P}_0\defeq F^\lad(P_0)\).  The map \(P_0\to A\)
  in~\(\Abel\) is adjoint to a map \(\hat{P}_0\to\hat{A}\)
  in~\(\Tri\).  We may complete this to an exact triangle
  \begin{equation}
    \label{eq:triangle_DP0A}
    D\xrightarrow{\varphi} \hat{P}_0\to\hat{A} \to D[1].
  \end{equation}
  Since \(P_0\to A\) is surjective, the map \(\hat{P}_0\to\hat{A}\)
  is an \(\Ideal\)\nb-epimorphism.  Hence~\(F\)
  maps~\eqref{eq:triangle_DP0A} to a short exact sequence \(F(D)
  \into P_0\prto A\).  Thus~\(D\) is a lifting of \(\Omega
  A\defeq\ker \partial_0\cong \coker \partial_2\).  Since~\(\Omega
  A\) has cohomological dimension~\(1\), its lifting~\(D\) is unique
  up to isomorphism by Proposition~\ref{pro:lift_one-dimensional}.
  The exact triangle~\eqref{eq:triangle_DP0A} shows that~\(\hat{A}\)
  is the cone of~\(\varphi\).  So any other lifting~\(\hat{A}'\)
  must be the cone of some arrow \(\varphi'\colon D\to\hat{P}_0\)
  that induces the inclusion map \(F(D)\to P_0\).  Conversely, if
  \(\varphi'\colon D\to\hat{P}_0\) lifts the inclusion map \(\Omega
  A\to P_0\), then its cone is a lifting of~\(A\).

  Let \(\hat{A}\) and~\(\hat{A}'\) be the liftings associated to
  \(\varphi\) and~\(\varphi'\).  We claim that an isomorphism
  \(\alpha\colon \hat{A}\to\hat{A}'\) that induces the identity map
  on \(F(\hat{A}')\cong A\cong F(\hat{A})\) may be embedded in a
  morphism of triangles
  \begin{equation}
    \label{eq:compare_liftings}
    \begin{tikzpicture}[baseline=(current bounding box.west)]
      \matrix(m)[cd,text height=2ex]{
        D&\hat{P}_0&\hat{A}\\
        D&\hat{P}_0&\hat{A}'.\\
      };
      \begin{scope}[cdar]
        \draw (m-1-1) -- node {\(\varphi\)} (m-1-2);
        \draw (m-2-1) -- node {\(\varphi'\)} (m-2-2);
        \draw (m-1-2) -- node {\(\pi_0\)} (m-1-3);
        \draw (m-2-2) -- node {\(\pi_0'\)} (m-2-3);
        \draw (m-1-3) -- node {\(\alpha\)} (m-2-3);
        \draw (m-1-3) -- node {\(\alpha\)} (m-2-3);
        \draw (m-1-1) -- node[swap] {\(\psi\)} (m-2-1);
      \end{scope}
      \draw[double, double equal sign distance] (m-1-2) -- (m-2-2);
    \end{tikzpicture}
  \end{equation}
  The assumption that~\(\alpha\) induces the identity map on~\(A\)
  means that the right square commutes.  This allows to find an
  arrow \(\psi\colon D\to D\) to give a triangle morphism, by an
  axiom of triangulated categories.  The arrow~\(\psi\) induces the
  identity map on \(F(D)=\Omega A\) because the map \(F(D)\to P_0\)
  is injective.  Thus \(\varphi'\circ\psi=\varphi\) for some
  \(\psi\colon D\to D\) that induces the identity map on~\(F(D)\).

  Conversely, let \(\varphi'\circ\psi=\varphi\) for some
  \(\psi\colon D\to D\) that induces the identity map on~\(F(D)\).
  This means that the left square in~\eqref{eq:compare_liftings}
  commutes.  Embed this square in a triangle morphism to construct
  \(\alpha\colon \hat{A}\to\hat{A}'\).  Since~\(\psi\) induces the
  identity map on~\(F(D)\), it is invertible.  Hence~\(\alpha\) is
  also invertible by the Five Lemma for exact triangles.  Summing
  up, \(\hat{A}\) and~\(\hat{A}'\) are equivalent liftings if and
  only if there is \(\psi\colon D\to D\) with
  \(\varphi'\circ\psi=\varphi\) and \(F(\psi)=\id_{\Omega A}\).

  By the Universal Coefficient Theorem, the possible choices for
  \(\psi-\id_D\) and \(\varphi'-\varphi\) lie in \(\Ext^1(\Omega
  A,\Omega A[-1])\) and \(\Ext^1(\Omega A, P_0[-1])\), respectively.
  The short exact sequence \(\Omega A\into P_0 \prto A\) induces a
  long exact sequence
  \[
  \dotsb \to \Ext^1(\Omega A,\Omega A[-1]) \xrightarrow{j}
  \Ext^1(\Omega A,P_0[-1]) \to
  \Ext^1(\Omega A,A[-1]) \to 0
  \]
  because \(\Ext^2(\Omega A,\blank)=0\).  We claim that the two
  liftings \(\hat{A}\) and~\(\hat{A}'\) are equivalent if and only
  if \(\varphi'-\varphi\) belongs to the image of the map~\(j\).
  
  If \(\hat{A}\) and~\(\hat{A}'\) are equivalent, we can write
  \(\varphi'\circ\psi=\varphi\) as above.  Then \(\varphi'-
  \varphi=\varphi'\circ (\id_D-\psi)\).  Since \(\id_D-\psi\)
  belongs to \(\Ext^1(\Omega A,\Omega A[-1])\subseteq\Tri(D,D)\),
  the naturality of the Universal Coefficient Theorem allows to
  compute the element \(\varphi'\circ
  (\id_D-\psi)\) in \(\Ext^1(\Omega A, P_0[-1])\subseteq\Tri(D,
  \hat P_0)\) as the Ext-product of \((\id_D-\psi)\in
  \Ext^1(\Omega A,\Omega A[-1])\) with the induced homomorphism
  \(F(\varphi')\in\Hom(\Omega A,P_0)\).  Since \(F(\varphi')\) is
  the inclusion map \(\Omega A\hookrightarrow P_0\),
  it takes \((\id_D-\psi)\) to \(j(\id_D-\psi)\).  In particular,
  \(\varphi'-\varphi\) belongs to the image of~\(j\).

  Conversely, if \(\varphi'-\varphi = j(\alpha)\) for some
  \(\alpha\in\Ext^1 (\Omega A, \Omega A[-1])\), we may write
  \(\varphi'-\varphi  = j(\id_D-\psi)\) by setting
  \(\psi = \id_D-\alpha\in\Tri(D,D)\).  Since \(F(\alpha) = 0\) we
  have \(F(\psi) = \id_{\Omega A}\).  Moreover, \(\varphi'\circ\psi
  =\varphi\) holds because \(\varphi'-\varphi = j(\id_D-\psi)=
  \varphi'\circ(\id_D-\psi)=\varphi'-\varphi'\circ\psi\).  Hence
  \(\hat{A}\) and~\(\hat{A}'\) are equivalent.

  It follows that \(\hat{A}\) and~\(\hat{A}'\)
  are equivalent if and only if \(\varphi'-\varphi\) is mapped
  to~\(0\) in \(\Ext^1(\Omega A, A[-1])\), and any element in
  \(\Ext^1(\Omega A, A[-1])\) arises from some~\(\varphi'\).
  Since~\(P_0\) is projective, another long exact sequence implies
  \[
  \Ext^1(\Omega A,A[-1])\cong \Ext^2(A,A[-1]).
  \]
  Thus \(\hat{A}\) and~\(\hat{A}'\) are equivalent if and only if
  \(\varphi'-\varphi\) is mapped to~\(0\) in \(\Ext^2(A,A[-1])\),
  and any element in \(\Ext^2(A,A[-1])\) comes from
  some~\(\varphi'\).

  We claim that the map sending~\(\hat{A}'\) to the image of
  \(\varphi'-\varphi\) in \(\Ext^2(A,A[-1])\) is a bijection from
  the set of equivalence classes of liftings of~\(A\) to
  \(\Ext^2(A,A[-1])\).  Indeed, if \(\hat{A}'_1\) and~\(\hat{A}'_2\)
  are two arbitrary liftings, they are cones of maps \(\varphi'_1,
  \varphi'_2\colon D\to \hat{P}_0\) both inducing the inclusion map
  \(\Omega A\to P_0\) on~\(F\).  They liftings \(\hat{A}'_1\)
  and~\(\hat{A}'_2\) are equivalent if and only if the map
  \begin{equation}
    \label{eq:obstruction_class_map}
    \Tri(D,\hat{P}_0)\supseteq
    \Ext^1(\Omega A,P_0[-1])
    \to \Ext^1(\Omega A,A[-1])
    \to \Ext^2(A,A[-1])
  \end{equation}
  sends \(\varphi'_1-\varphi'_2\mapsto0\).
  Since~\eqref{eq:obstruction_class_map} is a group homomorphism,
  this happens if and only if \(\varphi'_1-\varphi\) and
  \(\varphi'_2-\varphi\) have the same image in \(\Ext^2(A,A[-1])\).
\end{proof}

\begin{corollary}
  \label{cor:lift_two-dim_unique_Ext20}
  If \(A\inOb\Abel\) has cohomological dimension~\(2\) and
  \(\Ext^2(A,A[-1])=0\), then up to equivalence there is a
  unique lifting.
\end{corollary}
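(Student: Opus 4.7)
The plan is to deduce this immediately from the preceding results. First, I would invoke Lemma~\ref{lem:lift_length-two} to produce at least one lifting~\(\hat{A}\) of~\(A\); this settles existence and ensures the set of equivalence classes of liftings is nonempty. Then uniqueness will follow from Theorem~\ref{the:classify_liftings_two-dim}, which gives a bijection between equivalence classes of liftings of~\(A\) and the group \(\Ext^2(A,A[-1])\). Under the hypothesis \(\Ext^2(A,A[-1])=0\), this target set is a singleton, so there is exactly one equivalence class of liftings.

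There is essentially no obstacle here beyond correctly quoting the previous two results and checking that their hypotheses are satisfied: cohomological dimension~\(2\) of~\(A\) is the standing assumption of both Lemma~\ref{lem:lift_length-two} and Theorem~\ref{the:classify_liftings_two-dim}, so both apply directly. The only mild subtlety worth noting is that the bijection of Theorem~\ref{the:classify_liftings_two-dim} depends on a choice of ``base'' lifting~\(\hat{A}\) together with a length-two projective resolution; but since we are only asserting that the set of equivalence classes has cardinality one, this choice is immaterial. No additional construction is required.
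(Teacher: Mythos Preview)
Your proposal is correct and matches the paper's approach: the corollary is stated without proof immediately after Theorem~\ref{the:classify_liftings_two-dim}, as it follows directly from the bijection there (a group always has exactly one element when it is zero). Your separate invocation of Lemma~\ref{lem:lift_length-two} for existence is harmless but redundant, since the bijection with a group already forces the set of equivalence classes to be nonempty.
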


Our construction actually shows that the set of equivalence classes
of liftings carries a free and transitive action of the Abelian
group \(\Ext^2(A,A[-1])\).  Once we pick a single element, we thus
get a bijection to \(\Ext^2(A,A[-1])\); but this bijection depends
on the choice of one lifting, namely, the one corresponding to
\(0\in\Ext^2(A,A[-1])\).  For two liftings~\(\hat{A}_1\)
and~\(\hat{A}_2\) associated to classes
\(\delta_1,\delta_2\in\Ext^2(A,A[-1])\), the difference
\(\delta_2-\delta_1\in\Ext^2(A,A[-1])\) is canonically defined.  It
is called the \emph{relative obstruction class}
\(\delta(\hat{A}_2,\hat{A}_1)\).

There is a cohomological spectral sequence associated to the
ideal~\(\Ideal\), called ABC spectral sequence
in~\cite{Meyer:Homology_in_KK_II} after Adams, Brinkmann and
Christensen.  It is discussed in great detail in
\cite{Meyer:Homology_in_KK_II}*{Section~4}.  The relative
obstruction class \(\delta(\hat{A}_2,\hat{A}_1)\) is
related to the boundary map on the second page of the ABC
spectral sequence for \(\Tri(\hat{A}_1,\hat{A}_2)\).  The
\(E_2\)\nb-term in this spectral sequence is
\[
E_2^{p,q} = \Ext^p(A,A[q])
\]
for \(p\ge0\), \(q\in\Z\).  By assumption, \(E_2^{p,q}=0\) for
\(p\neq0,1,2\).  Hence \(E_k^{p,q}=0\) for \(p\neq0,1,2\) and
\(k\geq3\) as well.  Since the boundary map~\(d_k\) on~\(E_k\) has
bidegree \((k,1-k)\), we get \(d_k=0\) for \(k\ge3\), and the only
part of~\(d_2\) that may be non-zero is \(d_2^{0,q}\colon
E_2^{0,q}\to E_2^{2,q-1}\).  Hence
\[
E_\infty^{0,q} = \ker d_2^{0,q},\qquad
E_\infty^{1,q} = E_2^{1,q},\qquad
E_\infty^{2,q} = \coker d_2^{0,q+1}.
\]
As a consequence, \(\varphi\in\Hom(A,A)\) lifts to
\(\Tri(\hat{A}_1,\hat{A}_2)\) if and only if
\(d_2^{0,0}(\varphi)=0\).  In particular, \(\hat{A}_1\)
and~\(\hat{A}_2\) are equivalent liftings if and only if \(\id_A\in
\Hom(A,A)\) lifts to \(\Tri(\hat{A}_1,\hat{A}_2)\), if and only if
\[
d_2^{0,0}(\id_A) \in E_2^{2,-1} = \Ext^2(A,A[-1])
\]
vanishes.  Thus both conditions \(d_2^{0,0}(\id_A)=0\) and
\(\delta(\hat A_1,\hat A_2)=0\) are necessary and sufficient for an
equivalence of liftings.  This suggests the following lemma:

\begin{lemma}
  \(d_2^{0,0}(\id_A)= \delta(\hat{A}_1,\hat{A}_2) =
  -\delta(\hat{A}_2,\hat{A}_1)\).
\end{lemma}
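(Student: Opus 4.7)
The plan is to split the claim into the antisymmetry $\delta(\hat A_1,\hat A_2) = -\delta(\hat A_2,\hat A_1)$, which is formal, and the main identity $d_2^{0,0}(\id_A) = \delta(\hat A_1,\hat A_2)$, which requires matching two a priori different obstruction constructions. Antisymmetry is immediate from the torsor picture developed just before the lemma: fix any reference lifting corresponding to $0 \in \Ext^2(A,A[-1])$ and let $\delta_i$ denote the class of $\hat A_i$ relative to it; then $\delta(\hat A_2,\hat A_1) = \delta_2 - \delta_1 = -(\delta_1-\delta_2) = -\delta(\hat A_1,\hat A_2)$.

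For the main identity I would work with the triangles $D \xrightarrow{\varphi_i} \hat P_0 \xrightarrow{\pi_0^i} \hat A_i \to D[1]$ from the proof of Theorem~\ref{the:classify_liftings_two-dim}, where $\pi_0^i$ is adjoint to the surjection $\partial_0 \colon P_0 \to A$. Applying $\Tri(-,\hat A_2)$ to the triangle for $\hat A_1$, the element $\id_A \in \Hom(A,A) \subseteq E_2^{0,0}$ is represented at the $E_1$-level by $\pi_0^2 \in \Tri(\hat P_0,\hat A_2)$, and the standard description of $d_r$ in the ABC spectral sequence (see \cite{Meyer:Homology_in_KK_II}*{Section~4}) identifies $d_2^{0,0}(\id_A)$ with the class represented by $\pi_0^2 \circ \varphi_1 \in \Tri(D,\hat A_2)$. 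Since $F(\pi_0^2 \circ \varphi_1) = \partial_0 \circ \iota = 0$ for the inclusion $\iota \colon \Omega A \hookrightarrow P_0$, this element lies in $\Ext^1(\Omega A, A[-1]) \subseteq \Tri(D,\hat A_2)$, and is then transported to $\Ext^2(A,A[-1])$ via the canonical dimension-shift isomorphism coming from $\Omega A \hookrightarrow P_0 \twoheadrightarrow A$ and projectivity of $P_0$.

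To match this with $\delta(\hat A_1,\hat A_2)$, recall from the same proof that the relative obstruction class is the image of $\varphi_1 - \varphi_2 \in \Ext^1(\Omega A, P_0[-1])$ under post-composition with $\partial_0$ followed by the very same dimension-shift isomorphism to $\Ext^2(A,A[-1])$. By naturality of the Universal Coefficient Theorem, this post-composition step is realised at the level of $\Tri(D,\hat P_0)$ by composing with $\pi_0^2$, so the representative obtained is $\pi_0^2 \circ (\varphi_1 - \varphi_2)$. Since $\pi_0^2 \circ \varphi_2 = 0$ as consecutive maps in an exact triangle, this equals $\pi_0^2 \circ \varphi_1$, matching $d_2^{0,0}(\id_A)$ exactly and completing the identification.

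The main obstacle is the step identifying $d_2^{0,0}(\id_A)$ with the concrete class $\pi_0^2 \circ \varphi_1$, rather than merely showing the two obstructions vanish simultaneously. This requires an explicit description of the $d_2$-differential in the ABC spectral sequence associated to the two-step phantom tower $\hat A_1 \leftarrow D$ with cofibre $\hat P_0$, and care with sign conventions in the dimension-shift isomorphism $\Ext^1(\Omega A,A[-1]) \cong \Ext^2(A,A[-1])$, so that the sign in $d_2^{0,0}(\id_A) = \delta(\hat A_1,\hat A_2)$ rather than $-\delta(\hat A_1,\hat A_2)$ comes out correctly.
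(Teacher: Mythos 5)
Your proposal follows essentially the same route as the paper: you represent $\id_A$ at the level of the phantom tower by the canonical lift $\pi_0^2$ of $\partial_0$ (the paper's $\hat{\partial}_0'$), compute $d_2^{0,0}(\id_A)$ as the class of $\pi_0^2\circ\varphi_1=\pi_0^2\circ(\varphi_1-\varphi_2)$ using the vanishing of consecutive triangle arrows, and match this against the image of $\varphi_1-\varphi_2$ defining $\delta(\hat A_1,\hat A_2)$ via naturality of the UCT and the dimension-shift isomorphism. The only part you leave implicit, and correctly flag as the technical load-bearing step, is the explicit construction of the phantom tower for $\hat A_1$ that the paper writes out in order to realise the $d_2$-differential as $\iota_1^*\mapsto\hat{\partial}_0'\circ\varphi_1$; filling that in would make your sketch a complete proof identical in substance to the paper's.
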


\begin{proof}
  The cohomological spectral sequence for
  \(\Tri(\hat{A}_1,\hat{A}_2)\) in
  \cite{Meyer:Homology_in_KK_II}*{Section~4} is constructed using a
  phantom tower for~\(\hat{A}_1\).  We implicitly already constructed
  such a phantom tower when lifting~\(A\) to~\(\hat{A}_1\).  In the
  notation above, it looks as follows:
  \begin{equation}
    \label{eq:phantom_tower}
    \begin{tikzpicture}[scale=2,baseline=(current bounding box.west)]
      \node (N0) at (0,0) {\(\hat{A}_1\)};
      \node (N1) at (1,0) {\(D\)};
      \node (N2) at (2,0) {\(\hat{P}_2\)};
      \node (N3) at (3,0) {\(0\)};
      \node (N4) at (4,0) {\(\cdots\)};
      \node (P0) at (300:1) {\(\hat{P}_0\)};
      \node (P1) at ($(300:1)+(1,0)$) {\(\hat{P}_1\)};
      \node (P2) at ($(300:1)+(2,0)$) {\(\hat{P}_2\)};
      \node (P3) at ($(300:1)+(3,0)$) {\(0\)};
      \node (P4) at ($(300:1)+(4,0)$) {\(\cdots\)};
      \begin{scope}[cdar]
        \draw (N0) -- node {\(\iota_0\)} (N1);
        \draw (N1) -- node {\(\iota_1\)} (N2);
        \draw (N2) -- (N3);
        \draw (N3) -- (N4);
        \draw (P1) -- node {\(\hat{\partial}_1\)} (P0);
        \draw (P2) -- node {\(\hat{\partial}_2\)} (P1);
        \draw (P3) -- (P2);
        \draw (P4) -- (P3);
        \draw (P0) -- node[anchor=north east, inner sep=0pt] {\(\hat{\partial}_0\)} (N0);
        \draw (P1) -- node[anchor=north east, inner sep=0pt] {\(\pi_1\)} (N1);
        \draw (P3) -- (N3);
        \draw (P4) -- (N4);
        \draw (N1) -- node[anchor=south east, inner sep=0pt] {\(\varphi_1\)} (P0);
        \draw (N2) -- node[anchor=south east, inner sep=0pt] {\(\hat{\partial}_2\)} (P1);
        \draw (N3) -- (P2);
        \draw (N4) -- (P3);
      \end{scope}
      \draw[double, double equal sign distance] (P2) -- (N2);
      \draw[fill=white] ($.5*(N0)+.5*(N1)$) circle (.7pt);
      \draw[fill=white] ($.5*(N1)+.5*(N2)$) circle (.7pt);
    \end{tikzpicture}
  \end{equation}
  The circled arrows have degree~\(-1\).  (The conventions about the
  degrees of the maps in the phantom tower are different
  in~\cite{Meyer:Homology_in_KK_II}.)  Here \(\hat{P}_j\)
  and~\(\hat{\partial}_j\) are the unique liftings of the projective
  objects~\(P_j\) and the boundary maps~\(\partial_j\)
  in~\eqref{eq:length-two-resolution} for \(j=0,1,2\),
  and~\(\varphi_1\) is the map with cone~\(\hat{A}_1\) used in the
  arguments above.  The triangles involving~\(\iota_n\) are
  \(\Ideal\)\nb-exact, and the other triangles commute.  This
  together with the \(\Ideal\)\nb-projectivity of the
  objects~\(\hat{P}_n\) means that~\eqref{eq:phantom_tower} is a
  phantom tower.

  The relevant cohomological spectral sequence is constructed by
  applying the cohomological functor \(\Tri(\blank,\hat{A}_2)\) to
  the phantom tower for~\(\hat{A}_1\) in~\eqref{eq:phantom_tower}.
  The boundary map~\(d_2\) on the second page maps \(E_2^{0,0}\to
  E_2^{2,-1}\), where
  \begin{alignat*}{2}
    E_2^{0,0} &\defeq
    \{ x\in\Tri_0(\hat{P}_0,\hat{A}_2) \mid \hat{\partial}_1^*(x)=0\}
    &&\cong \Hom(A,A),\\
    E_2^{2,-1} &\defeq
    \Tri_{-1}(\hat{P}_2,\hat{A}_2) \bigm/
    \hat{\partial}_2^*\bigl(\Tri_{-1}(\hat{P}_1,\hat{A}_2)\bigr)
    &&\cong \Ext^2(A,A[-1]).
  \end{alignat*}
  We describe how~\(d_2\) acts on \(\id_A\in\Hom(A,A)\) (see
  \cite{Meyer:Homology_in_KK_II}*{Section 4.1}).  Let
  \(\hat{\partial}_0'\in\Tri_0(\hat{P}_0,\hat{A}_2)\) be the unique
  element such that \(F(\hat{\partial}_0')=\partial_0\colon P_0\to
  A\).  We have \(\hat{\partial}_0'\circ\hat{\partial}_1=0\) because
  \(F(\hat{\partial}_0'\circ\hat{\partial}_1)=0\).  The isomorphism
  \(\Hom(A,A) \to E_2^{0,0}\) maps \(\id_A\)
  to~\(\hat{\partial}_0'\in\Tri_0(\hat{P}_0,\hat{A}_2)\).  Since
  \(\hat{\partial}_0'\circ \varphi_1\circ\pi_1=
  \hat{\partial}_0'\circ \hat{\partial}_1 = 0\), there is
  \(\rho\in\Tri_{-1}(\hat{P}_2,\hat{A}_2)\) with
  \(\iota_1^*(\rho)=\hat{\partial}_0'\circ \varphi_1\).  The image
  of~\(\rho\) in~\(E_2^{2,-1}\) is \(d_2(\id_A)\).

  The UCT exact sequence for \(\Tri_*(D,\hat{A}_2)\) is the long
  exact sequence associated to the triangle \(\hat{P}_2\hat{P}_1D\)
  in~\eqref{eq:phantom_tower}.  This exact sequence shows
  that~\(\iota_1^*\) is an isomorphism from~\(E_2^{2,-1}\), the
  cokernel of~\(\hat{\partial}_2^*\), onto
  \[
  \Ext^1(\Omega A, A[-1]) \subseteq \Tri_0(D,\hat{A}_2).
  \]
  The map~\(d_2\) is constructed so that
  \(\iota_1^*\bigl(d_2(\id_A)\bigr)=\hat{\partial}_0'\circ
  \varphi_1\).

  Embed~\(\hat{\partial}_0'\) in an exact triangle
  \[
  D \xrightarrow{\varphi_2} \hat{P}_0
  \xrightarrow{\hat{\partial}_0'} \hat{A}_2
  \to D[1]
  \]
  as in~\eqref{eq:triangle_DP0A}.  Then \(\hat{\partial}_0'\circ
  \varphi_2=0\) and hence \(\iota_1^*\bigl(d_2(\id_A)\bigr) =
  \hat{\partial}_0'\circ (\varphi_1-\varphi_2)\).

  The map \(\varphi_2-\varphi_1\) induces the zero map \(F(D)\to
  F(\hat{P}_0)\) and hence corresponds to an element~\(x\) in
  \(\Ext^1(\Omega A, P_0[-1])\) by the UCT sequence.  By definition,
  the obstruction class \(\delta(\hat{A}_2,\hat{A}_1)\) is the image
  of~\(x\) under the map
  \[
  \Ext^1(\Omega A,P_0[-1])\to \Ext^1(\Omega A,A[-1]) \cong
  \Ext^2(A,A[-1]),
  \]
  where the first map is induced by the projection \(P_0\to A\).  By
  the naturality of the UCT sequence, this maps~\(x\) to
  \(\hat{\partial}_0'\circ (\varphi_2-\varphi_1)\).  Comparing this
  with our computation of \(d_2(\id_A)\) shows that
  \(\delta(\hat{A}_2,\hat{A}_1)=-d_2(\id_A)\).
\end{proof}

Our description of equivalence classes of liftings is not yet a
classification of objects in~\(\gen{\Proj_\Ideal}\) up to isomorphism.
Two objects \(\hat{A}_1,\hat{A}_2\in\gen{\Proj_\Ideal}\) are
isomorphic if and only if there is an isomorphism \(F(\hat{A}_1)\to
F(\hat{A}_2)\) that lifts to \(\Tri(\hat{A}_1,\hat{A}_2)\).  If
\(F(\hat{A}_1)=F(\hat{A}_2)\) and
\(\delta(\hat{A}_1,\hat{A}_2)\neq0\), then the identity map
\(F(\hat{A}_1)\to F(\hat{A}_2)\) does not lift; but there may be
another isomorphism \(F(\hat{A}_1)\cong F(\hat{A}_2)\) that lifts to
\(\Tri(\hat{A}_1,\hat{A}_2)\).  This seems hard to decide given only
\(F(\hat{A}_i)\) and \(\delta(\hat{A}_1,\hat{A}_2)\neq0\).

\subsection{Parity assumptions}
\label{sec:even_case}

We are going to impose an extra assumption on~\(\Abel\) that provides
a \emph{canonical} lifting for each object of~\(\Abel\) of
cohomological dimension~\(2\).  This allows us to understand the
action of automorphisms on obstruction classes and to classify objects
of~\(\gen{\Proj_\Ideal}\) with length-\(2\)-projective resolutions up
to isomorphism.

\begin{definition}
  A stable Abelian category is called \emph{\splitt{}} if
  \(\Abel=\Abel_+\times\Abel_-\) with \(\Abel_+[-1]=\Abel_-\) and
  \(\Abel_-[-1]=\Abel_+\); that is, any object of~\(\Abel\) is a
  direct sum of objects of even and odd parity, and the suspension
  automorphism on~\(\Abel\) shifts parity.
\end{definition}

\begin{example}
  \label{exa:Ztwo_graded_split}
  Let~\(\Abel\) be the category of countable,
  \(\Z/2\)\nb-graded modules over a ring~\(R\).  Then~\(\Abel\)
  is \splitt{}, with~\(\Abel_\pm\) the subcategories of countable
  \(R\)\nb-modules concentrated in even or odd degree,
  respectively.

  If \(\Tri=\KK^G\) for a compact group~\(G\) and~\(\Ideal\) is
  the kernel of morphisms of the functor~\(\K^G_*\), then the
  Abelian approximation of~\(\Tri\) with respect to~\(\Ideal\)
  is the category of countable, \(\Z/2\)\nb-graded modules over
  the representation ring of~\(G\).  Hence~\(\Abel\) is \splitt{}
  in this example.
\end{example}

Assume that~\(\Abel\) is \splitt{}.  Since the two
subcategories~\(\Abel_\pm\) are orthogonal, we have
\(\Ext^2(A_+,A_-)=0\) and \(\Ext^2(A_-,A_+)=0\) for
\(A_+\in\Abel_+\), \(A_-\in\Abel_-\).  Now write
\(A\inOb\Abel\) as \(A\cong A_+\oplus A_-\) with
\(A_\pm\in\Abel_\pm\).  Then \(\Ext^2(A_+,A_+[-1])=0\) because
\(A_+[-1]\in\Abel_-\) and \(\Ext^2(A_-,A_-[-1])=0\) because
\(A_-[-1]\in\Abel_+\).
Corollary~\ref{cor:lift_two-dim_unique_Ext20} shows that there
are unique liftings \(\hat{A}_+\) and~\(\hat{A}_-\) for \(A_+\)
and~\(A_-\) (up to equivalence).  We call \(\hat{A}_0\defeq
\hat{A}_+\oplus\hat{A}_-\inOb\Tri\) the \emph{canonical}
lifting of~\(A\) and let \(\delta(\hat{A})\defeq
\delta(\hat{A},\hat{A}_0)\) for any other lifting.  This
defines a canonical obstruction class in \(\Ext^2(A,A[-1])\)
for all liftings~\(\hat{A}\) of~\(A\).  A simple computation as
in \cite{Wolbert:Classifying_K-modules}*{Proposition 9} shows
that, for \(f\in\Hom(A,B)\), the element
\(d_2^{0,0}(f)\in\Ext^2(A,B[-1])\) is given by the formula
\begin{equation}
    \label{eq:differential_in_terms_of_deltas}
  d_2^{0,0}(f)=\delta(\hat{B})f-f\delta(\hat{A}).
\end{equation}

\begin{definition}
  \label{def:Ext_enriched_invariant}
  Let~\(\Abel\delta\) denote the additive category of
  pairs~\((A,\delta)\) with \(A\inOb\Abel\) and
  \(\delta\in\Ext^2(A,A[-1])\); morphisms from \((A,\delta)\)
  to \((A',\delta')\) in~\(\Abel\delta\) are morphisms~\(f\)
  from~\(A\) to~\(A'\) in~\(\Abel\) which satisfy the
  compatibility condition \(\delta' f=f\delta\).
\end{definition}

There is an additive functor
\[
F\delta\colon\Tri\to\Abel\delta,\qquad
\hat{A}\mapsto\bigl(F(\hat{A}),\delta(\hat{A})\bigr).
\]
The following classification result generalises
\cite{Bousfield:K_local_at_add_prime}*{Theorem 9.1} and
\cite{Wolbert:Classifying_K-modules}*{Theorem 11}.

\begin{theorem}
  \label{thm:Ext_enriched_classification}
  Assume that~\(\Abel\) is \splitt{} and has global
  dimension~\(2\).  Then the functor~\(F\delta\) is full and induces a
  bijection between isomorphism classes of objects~\(\hat{A}\)
  in~\(\gen{\Proj_\Ideal}\) and isomorphism classes of objects
  in the category~\(\Abel\delta\).  Furthermore, every lift of
  an isomorphism in~\(\Abel\delta\) is an isomorphism
  in~\(\Tri\).
\end{theorem}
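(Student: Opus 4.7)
The plan is to decompose the theorem into essential surjectivity of~\(F\delta\) on objects, fullness, and reflection of isomorphisms, deducing each from Theorem~\ref{the:classify_liftings_two-dim}, the formula~\eqref{eq:differential_in_terms_of_deltas}, and the characterisation of~\(\gen{\Proj_\Ideal}\) recalled at the start of the section.

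For essential surjectivity, given \((A,\delta)\inOb\Abel\delta\), Lemma~\ref{lem:lift_length-two} supplies a lifting of~\(A\) (the global dimension~\(2\) hypothesis ensures a projective resolution of length at most~\(2\)).  By Theorem~\ref{the:classify_liftings_two-dim}, the equivalence classes of liftings of~\(A\) form a torsor over \(\Ext^2(A,A[-1])\); since the canonical lifting~\(\hat{A}_0\) from the parity decomposition serves as a distinguished basepoint, every class is realised as~\(\delta(\hat{A})\) for some lifting~\(\hat{A}\), so some~\(\hat{A}\) satisfies \(F\delta(\hat{A})=(A,\delta)\).

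For fullness, a morphism \(f\colon(A_1,\delta_1)\to(A_2,\delta_2)\) in~\(\Abel\delta\) is an \(\Abel\)-morphism \(f\colon A_1\to A_2\) satisfying \(\delta_2 f=f\delta_1\).  The ABC spectral sequence discussion preceding the theorem shows that~\(f\) lifts to \(\Tri(\hat{A}_1,\hat{A}_2)\) if and only if \(d_2^{0,0}(f)=0\); applying~\eqref{eq:differential_in_terms_of_deltas} identifies this obstruction with \(\delta_2 f-f\delta_1\), which vanishes precisely because~\(f\) is a morphism in~\(\Abel\delta\).  Hence~\(f\) admits a lift.

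For the remaining assertions -- the ``furthermore'' clause and injectivity of~\(F\delta\) on isomorphism classes -- I would verify that any \(\hat{f}\in\Tri(\hat{A}_1,\hat{A}_2)\) with \(F(\hat{f})\) an isomorphism in~\(\Abel\) is itself invertible.  Such~\(\hat{f}\) is an \(\Ideal\)-equivalence, so its cone~\(C\) is \(\Ideal\)-contractible.  Since \(\hat{A}_1,\hat{A}_2\inOb\gen{\Proj_\Ideal}\) and this subcategory is triangulated, also \(C\inOb\gen{\Proj_\Ideal}\), so \(\Tri(C,C)=0\) by the defining property of~\(\gen{\Proj_\Ideal}\); this forces \(C=0\) and hence~\(\hat{f}\) invertible.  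Injectivity of~\(F\delta\) on isomorphism classes then follows from fullness, because any isomorphism \((A_1,\delta_1)\cong(A_2,\delta_2)\) in~\(\Abel\delta\) lifts to an arrow in~\(\Tri\) that is automatically an isomorphism.  I expect the only real subtlety to be a correct application of~\eqref{eq:differential_in_terms_of_deltas}, since it is this identity that converts the algebraic compatibility \(\delta_2 f=f\delta_1\) into the vanishing of the unique spectral-sequence obstruction; with that identity already in place from the preceding discussion, the remainder is bookkeeping with the triangulated structure.
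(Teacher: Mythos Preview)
Your proposal is correct and follows essentially the same argument as the paper's proof: both use Theorem~\ref{the:classify_liftings_two-dim} for essential surjectivity, the identity~\eqref{eq:differential_in_terms_of_deltas} together with the spectral-sequence lifting criterion for fullness, and the standard cone argument in~\(\gen{\Proj_\Ideal}\) for reflection of isomorphisms. The only difference is the order in which the three parts are treated.
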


\begin{proof}
  The last claim in the theorem follows from a standard
  argument: if~\(\hat{A}_1\) and~\(\hat{A}_2\) belong
  to~\(\gen{\Proj_\Ideal}\) and if
  \(f\in\Tri(\hat{A}_1,\hat{A}_2)\) is an
  \(\Ideal\)-equivalence, then the mapping cone~\(C_f\)
  of~\(f\) is both \(\Ideal\)\nb-contractible and
  in~\(\gen{\Proj_\Ideal}\); hence \(\Tri(C_f,B)=0\) for all
  \(\Ideal\)\nb-contractible \(B\inOb\Tri\) and in particular
  \(\Tri(C_f,C_f)=0\), so that \(C_f\cong 0\), that is, \(f\) is
  invertible.

  The proof of Theorem~\ref{the:classify_liftings_two-dim} shows
  that every class in \(\Ext^2(A,A[-1])\) appears as \(\delta(\hat
  A)\) for some lifting~\(\hat{A}\) of~\(A\).  Hence~\(F\delta\) is
  essentially surjective.  Since a morphism \(f\in\Hom(A,B)\) lifts
  to a morphism \(\hat{A}\to\hat{B}\) if (and only if)
  \(d_2^{0,0}(f)=0\), \eqref{eq:differential_in_terms_of_deltas}
  shows that the functor~\(F\delta\) is full.  Hence~\(F\delta\)
  distinguishes isomorphism classes.
\end{proof}

\section{Kasparov theory for circle actions}
\label{sec:KK_T}

Let~\(G\) be a connected compact Lie group with torsion-free
fundamental group.  We will soon specialise to the circle group
\(G=\T\), but some results hold more generally.  Let \(\Tri\defeq
\KK^G\) be the \(G\)\nb-equivariant Kasparov theory.  It has
\(\Cst\)\nb-algebras with a continuous \(G\)\nb-action as objects
and \(\KK^G_0(A,B)\) as arrows from~\(A\) to~\(B\).  Its
triangulated category structure is introduced
in~\cite{Meyer-Nest:BC}.

The representation ring~\(R\) of~\(G\) is naturally isomorphic to
\(\KK^G_0(\C,\C)\).  The \(G\)\nb-equivariant \(\K\)\nb-theory
\(\K^G_*(A) \cong \KK^G_*(\C,A)\) is a module over~\(R\) by exterior
product.  Furthermore, \(\K^G_*(A)\cong \K_*(A\rtimes G)\) is
countable if~\(A\) is separable because it is the \(\K\)\nb-theory of
a separable \(\Cst\)\nb-algebra.  Let~\(\Abel\) be the category of
countable, \(\Z/2\)\nb-graded \(R\)\nb-modules.  Let \(F\defeq
\K^G_*\colon \Tri\to\Abel\) be the equivariant \(\K\)\nb-theory
functor.  Let~\(\Ideal\) be the kernel of~\(F\) on morphisms.  This is
a stable homological ideal by definition.

\cite{Meyer-Nest:Homology_in_KK}*{Theorem 72} says that~\(F\) is the
universal \(\Ideal\)\nb-exact stable homological functor and
that~\(\Ideal\) has enough projective objects.  More precisely, the
adjoint~\(F^\lad\) maps the free rank-one module~\(R\) to~\(\C\) with
trivial \(G\)\nb-action because
\[
\KK_*^G(\C,A)\cong \K^G_*(A) \cong \Abel\bigl(R,\K^G_*(A)\bigr).
\]

An object~\(B\) of \(\KK^G\) is \(\Ideal\)\nb-contractible if and only
if \(\K^G_*(B)=0\).  We have \(\KK^G_*(A,B)=0\) for all
\(\Ideal\)\nb-contractible~\(B\) if and only if~\(A\) belongs
to~\(\gen{\C}\), the localising subcategory of~\(\KK^G\) generated
by~\(\C\); this is the correct analogue of the bootstrap class in this
case.
Liftings for objects of~\(\Abel\) are required to belong
to~\(\gen{\C}\).  The following result is implicit
in~\cite{Meyer-Nest:BC_Coactions}.

\begin{proposition}
  \label{pro:bootstrap_in_KKG}
  Let~\(G\) be a connected compact Lie group such that \(\pi_1(G)\) is
  torsion-free.  Let~\(T\) be a maximal torus in~\(G\).  A
  \(G\)\nb-\(\Cst\)-algebra~\(A\) belongs to the equivariant bootstrap
  class~\(\gen{\C}\) if and only if \(A\rtimes G\) belongs to the
  usual bootstrap class in~\(\KK\), if and only if \(A\rtimes T\)
  belongs to the usual bootstrap class in~\(\KK\).
\end{proposition}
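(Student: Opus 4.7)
The plan is to prove the easy direction by pushing forward along the crossed-product and restriction functors, and then to obtain the converses by Takai duality together with an induction--restriction descent from $T$ to $G$ that exploits the topology of $G$.

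For the forward direction, the crossed product $\blank \rtimes H \colon \KK^H \to \KK$ (for $H \in \{G, T\}$) is a triangulated functor that preserves countable direct sums. It sends $\C$ to $C^*(H)$, which lies in the usual bootstrap class of $\KK$: the case $H = T$ follows from $C^*(T) \cong C_0(\hat T)$, and the case $H = G$ from the Peter--Weyl decomposition of $C^*(G)$ as a countable $c_0$-direct sum of matrix algebras. Hence $\blank \rtimes H$ carries $\langle \C \rangle_{\KK^H}$ into the bootstrap class. Combined with the evident fact that restriction $\Res^G_T$ is triangulated, preserves direct sums, and fixes $\C$, this yields the two implications ``$A \in \langle \C \rangle_{\KK^G}$ implies $A \rtimes G$ is bootstrap'' and ``$A \in \langle \C \rangle_{\KK^G}$ implies $A \rtimes T$ is bootstrap''.

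For the converses I would funnel everything through $T$. Since $T$ is abelian, Takai duality provides an equivalence $\KK^T \simeq \KK^{\hat T}$ that exchanges $\blank \rtimes T$ with the forgetful functor (after stabilisation by $\Comp(L^2(T))$) and matches bootstrap classes on the two sides. Because $\hat T \cong \Z^{\dim T}$ satisfies the strong Baum--Connes conjecture with coefficients, the $\hat T$-equivariant bootstrap class consists precisely of those $\hat T$-algebras whose underlying $C^*$-algebra lies in the usual bootstrap class. Applying this with input $A \rtimes T$ yields the equivalence ``$A \rtimes T$ is bootstrap'' $\Leftrightarrow$ ``$A \in \langle \C \rangle_{\KK^T}$'', where we view $A$ as a $T$-algebra via $\Res^G_T$.

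The decisive step, and the main obstacle, is the descent from $T$ back up to $G$: one must show that $\Res^G_T A \in \langle \C \rangle_{\KK^T}$ forces $A \in \langle \C \rangle_{\KK^G}$, and dually that $A \rtimes T$ being in the bootstrap class is equivalent to $A \rtimes G$ being there. The hypothesis that $G$ is connected compact Lie with torsion-free $\pi_1$ enters precisely here: it ensures that $C(G/T)$ lies in $\langle \C \rangle_{\KK^G}$ via its Bruhat decomposition into $G$-invariant cells, so that $R(T)$ is a free $R(G)$-module of rank $|W|$ and the induction--restriction adjunction between $\KK^T$ and $\KK^G$ is well behaved. Combined with Green's imprimitivity theorem $A \rtimes T \sim_M (C(G/T) \otimes A) \rtimes G$, this provides the required descent. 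This is essentially the coaction machinery developed in \cite{Meyer-Nest:BC_Coactions}, which I would invoke to finish the argument.
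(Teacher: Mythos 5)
Your proposal is essentially correct but takes a genuinely different route from the paper's. The paper dispatches the converse in one stroke: torsion-free $\pi_1(G)$ gives $H^2(G,\T)=0$, whence \cite{Meyer-Nest:BC_Coactions}*{Proposition 3.3} says directly that every $G$\nb-$\Cst$\nb-algebra~$A$ lies in $\gen{\mathfrak{t}(A\rtimes T)}$, the localising subcategory generated by $A\rtimes T$ with trivial $G$\nb-action; since $\mathfrak t$ is triangulated and preserves direct sums, $A\rtimes T\in\gen{\C}_{\KK}$ then immediately gives $A\in\gen{\C}_{\KK^G}$. The equivalence between the $A\rtimes T$ and $A\rtimes G$ conditions follows from Green imprimitivity together with \cite{Meyer-Nest:BC_Coactions}*{Proposition 2.1}, which says $C(G/T)\sim_{\KK^G}\C^{\abs{W}}$. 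Your route instead re-derives the $T$\nb-case by hand through Takai duality and strong Baum--Connes for $\hat T\cong\Z^n$, and then separately descends from $T$ to $G$.

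Two points deserve sharpening. First, the phrase ``the $\hat T$\nb-equivariant bootstrap class consists precisely of those $\hat T$\nb-algebras whose underlying $\Cst$\nb-algebra lies in the usual bootstrap class'' is imprecise as stated: $\gen{\C}_{\KK^{\hat T}}$ (trivial action) is strictly smaller. What you actually need is the equivalence ``$B\in\gen{C_0(\hat T)}_{\KK^{\hat T}}$ iff the underlying algebra of~$B$ is bootstrap,'' with $C_0(\hat T)$ carrying the translation action; this is the form that follows from strong BC with coefficients for the torsion-free group~$\hat T$ together with a Fell-absorption argument, and it is the class that corresponds under Takai duality to $\gen{\C}_{\KK^T}$. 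Second, for the descent $\Res^G_T A\in\gen{\C}_{\KK^T}\Rightarrow A\in\gen{\C}_{\KK^G}$, it is not enough to know $C(G/T)\in\gen{\C}_{\KK^G}$ (Bruhat cells); inducing gives $\Ind^G_T\Res^G_T A\cong C(G/T)\otimes A\in\gen{C(G/T)}_{\KK^G}\subseteq\gen{\C}_{\KK^G}$, but to extract~$A$ you need the stronger statement $C(G/T)\sim_{\KK^G}\C^{\abs{W}}$, which makes $A$ a direct summand of $C(G/T)\otimes A\sim A^{\abs{W}}$; localising subcategories are closed under summands, completing the step. You gesture at this via Pittie--Steinberg and a ``well behaved'' adjunction, but the $\KK^G$\nb-equivalence of $C(G/T)$ with $\C^{\abs{W}}$ is the precise fact being used. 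Both routes ultimately lean on \cite{Meyer-Nest:BC_Coactions}; the paper's is shorter because Proposition 3.3 there already packages the Takai-duality-plus-Dirac content you unfold by hand.
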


\begin{proof}
  If \(A\inOb\gen{\C}\), then \(A\rtimes G\inOb\gen{\C\rtimes
    G}=\gen{\Cst(G)}=\gen{\C}\) because~\(\Cst(G)\) is a direct sum of
  matrix algebras; similarly, \(A\rtimes T\inOb\gen{\C}\).
  Conversely, assume that \(A\rtimes T\) belongs to the usual
  bootstrap class \(\gen{\C}\subseteq \KK\).  The assumptions on~\(G\)
  imply that \(H^2(G,\T)=0\).  Hence
  \cite{Meyer-Nest:BC_Coactions}*{Proposition 3.3} says that any
  \(G\)\nb-\(\Cst\)\nb-algebra~\(A\) belongs to the localising
  subcategory of~\(\Tri\) generated by \(A\rtimes T\) equipped with
  the trivial \(G\)\nb-action.  Taking the trivial \(G\)\nb-action is
  a triangulated functor \(\mathfrak{t}\colon \KK\to\KK^G\), so
  \(A\inOb \gen{\mathfrak{t}(A\rtimes T)}\) and \(A\rtimes T \inOb
  \gen{\C}\) give \(A\inOb \gen{\mathfrak{t}(\C)}\) as asserted.

  There is a Morita--Rieffel equivalence \(A\rtimes T\sim \bigl(A
  \otimes\CONT(G/T)\bigr)\rtimes G\).
  \cite{Meyer-Nest:BC_Coactions}*{Proposition 2.1} says that
  \(\CONT(G/T)\) is \(\KK^G\)-equivalent to~\(\C^w\), where~\(w\) is
  the size of the Weyl group of~\(G\).  Hence \(A\rtimes T\) is
  \(\KK\)\nb-equivalent to \((A\rtimes G)^w\).  Thus \(A\rtimes T\inOb
  \gen{\C}\) if and only if \(A\rtimes G\inOb \gen{\C}\).
\end{proof}

Let~\(n\) be the rank of the maximal torus in~\(G\) and let~\(W\) be
the Weyl group of~\(G\).  Then \(R\cong
\Z[x_1,\dotsc,x_n,x_1^{-1},\dotsc,x_n^{-1}]^W\),
where the action of~\(W\) comes from the canonical action on~\(T\).
Even more, we have
\[
R\cong \Z[x_1,\dotsc,x_n,x_1^{-1},\dotsc,x_l^{-1}]
\]
for some~\(l\) with \(0\le l\le n\); see for
instance~\cite{Steinberg:Theorem_of_Pittie}.  This ring has
cohomological dimension \(n+1\) because~\(\Z\) has cohomological
dimension~\(1\) and each independent variable adds~\(1\) to the
length of resolutions.

The cohomological dimension of~\(R\) is~\(2\) if and only if
\(n=1\).  The two groups~\(G\) with \(n=1\) are the circle
group~\(\T\) and \(\textup{SU}(2)\).  (The group~\(\textup{SO}(3)\)
has torsion in~\(\pi_1\) and therefore is not covered by
Proposition~\ref{pro:bootstrap_in_KKG}.)  If \(n=1\), then
Theorem~\ref{the:classify_liftings_two-dim} applies to all objects
of~\(\Abel\).  That is, any \(M\inOb\Abel\) has a lifting, and
equivalence classes of liftings are in bijection with
\(\Ext^2_\Abel(M,M[-1])\); this is the even part of
\(\Ext^2_R(M,M[-1])\) with its usual \(\Z/2\)\nb-grading, so that we
may also denote it by \(\Ext^2_R(M,M)^-\).  The category~\(\Abel\)
is even, so that the results of Section~\ref{sec:even_case} apply as
well.  That is, there is a canonical lifting of any \(M\inOb\Abel\),
namely, the direct sum \(\hat M_+\oplus \hat M_-\), where \(\hat
M_+\) and~\(\hat M_-\) are the unique lifting of the even and odd
part of~\(M\), respectively.  Every object \(A\inOb\gen{\C}\) has an
invariant \((M,\delta)\inOb\Abel\delta\) with \(M\defeq \K^G_*(A)\)
and \(\delta\in\Ext^2_R(M,M)^-\);
Theorem~\ref{thm:Ext_enriched_classification} says that
\(A_1,A_2\inOb\gen{\C}\) corresponding to \((M_1,\delta_1)\) and
\((M_2,\delta_2)\) in~\(\Abel\delta\) are isomorphic if and only if
there is a grading preserving \(R\)\nb-module isomorphism \(f\colon
M_1\to M_2\) with \(f\delta_1=\delta_2 f\) in
\(\Ext^2_R(M_1,M_2)^-\).

If \(n=2\), then Theorem~\ref{the:classify_liftings_two-dim} still
applies, among others, to objects of~\(\Abel\) that are free as
Abelian groups.  Groups~\(G\) for which this happens are \(\T^2\),
\(\T\times\textup{SU}(2)\), \(\textup{SU}(2)\times\textup{SU}(2)\),
\(\textup{SU}(3)\), \(\textup{Spin}(5)\), and the simply connected
compact Lie group with Dynkin diagram of type~\(G_2\).  For even
higher rank, we know no useful sufficient criterion for an
\(R\)\nb-module to have a projective resolution of length~\(2\).

We now consider some natural examples of circle actions on
\(\Cst\)\nb-algebras.  Thus \(G=\T\) and \(R=\Z[x,x^{-1}]\) from now
on.

\begin{example}
  \label{exa:invariant_On}
  Consider the Cuntz algebra~\(\mathcal{O}_n\) with its usual gauge
  action, defined by multiplying each generator by \(z\in\T\).
  Then~\(\mathcal{O}_n\rtimes\T\) is Morita--Rieffel equivalent to the
  fixed-point algebra~\(\mathcal{O}_n^\T\).  This is the UHF-algebra
  of type~\(n^\infty\).  It belongs to the bootstrap class, so that
  \(\mathcal{O}_n\in\gen{\C}\), and it has
  \(\K\)\nb-theory~\(\Z[1/n]\).  The generator of the representation
  ring~\(x\) acts on this by multiplication by~\(n\).  Thus
  \[
  M\defeq \K^\T_*(\mathcal{O}_n) \cong \Z[x,x^{-1}]/(x-n),
  \]
  where~\((x-n)\) means the principal ideal generated by~\(x-n\).
  This is concentrated in degree~\(0\) and has a length-1-projective
  resolution
  \begin{equation}
    \label{eq:length-one-resolution_On}
    0 \to \Z[x,x^{-1}] \xrightarrow{x-n} \Z[x,x^{-1}]
    \prto \K^\T_*(\mathcal{O}_n).
  \end{equation}
  Either of these two facts shows that
  \(\Ext^2_{\Z[x,x^{-1}]}(M,M)^-=0\).  Hence~\(\mathcal{O}_n\) is the
  unique object of~\(\gen{\C}\) with \(\K^\T_*(A) \cong
  \Z[x,x^{-1}]/(x-n)\).
\end{example}

\subsection{Cuntz--Krieger algebras}
\label{sec:invariant_Cuntz-Krieger}

Now consider the Cuntz--Krieger algebra~\(\mathcal{O}_A\) with its
usual gauge action; it is defined by an \(n\times n\)-matrix~\(A\)
with entries in \(\{0,1\}\) or more generally in the non-negative
integers, such that no row or column vanishes identically.  The crossed
product~\(\mathcal{O}_A\rtimes\T\) is Morita--Rieffel equivalent to
the fixed-point algebra~\(\mathcal{O}_A^\T\) by
\cite{Pask-Raeburn:K-Theory_of_CKA}*{Theorem 3.2.2 and Lemma 4.1.1}.
The fixed-point algebra is an AF-algebra, and its \(\K_0\)\nb-group is
isomorphic to the direct limit of the iteration sequence
\[
\Z^n \xrightarrow{A^\textup{t}}
\Z^n \xrightarrow{A^\textup{t}}
\Z^n \xrightarrow{A^\textup{t}}
\Z^n \xrightarrow{A^\textup{t}} \dotsb;
\]
the action of the generator~\(x\) is induced by multiplication
with~\(A^\textup{t}\) (see
\cite{Cuntz:topological_Markov_chains_II}*{Proof of Proposition
  3.1}).  In particular, given two Cuntz--Krieger algebras
\(\mathcal{O}_A\) and~\(\mathcal{O}_B\), for degree reasons we have
\(\Ext^2_{\Z[x,x^{-1}]}\bigl(\K_*^\T(\mathcal{O}_A),
\K_{*}^\T(\mathcal{O}_B)\bigr)^-=0\).  Alternatively, we may write
down a projective resolution of \(\K_*^\T(\mathcal{O}_A)\) of
length-\(1\) as in~\eqref{eq:length-one-resolution_On}.  Hence every
grading-preserving \(\Z[x,x^{-1}]\)-module isomorphism
\(\K_*^\T(\mathcal{O}_A)\to\K_*^\T(\mathcal{O}_B)\) lifts to a
\(\KK^\T\)-equivalence.  We get the following characterisation of
\(\KK^\T\)-equivalence for Cuntz--Krieger algebras:

\begin{theorem}
  Let~\(A\) and~\(B\) be finite square matrices with non-negative
  integral entries such that no row or column vanishes identically.
  The following are equivalent:
  \begin{itemize}
  \item The gauge actions on~\(\mathcal{O}_A\) and~\(\mathcal{O}_B\)
    are \(\KK^\T\)\nb-equiv\-a\-lent.
  \item The \(\Z[x,x^{-1}]\)-modules \(\K_*^\T(\mathcal{O}_A)\) and
    \(\K_*^\T(\mathcal{O}_B)\) are isomorphic.
  \item The matrices~\(A\) and~\(B\) are shift equivalent over the
    integers.
  \end{itemize}
\end{theorem}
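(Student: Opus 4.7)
The implication from $\KK^\T$-equivalence to isomorphism of equivariant K-theory modules is immediate from functoriality of $\K^\T_*$, so the plan is to establish the converse and to identify the module-level condition with shift equivalence over~$\Z$.

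For the converse, I would first check that $\mathcal{O}_A$ belongs to the equivariant bootstrap class $\gen{\C}\subseteq\KK^\T$. By Proposition~\ref{pro:bootstrap_in_KKG} this reduces to showing $\mathcal{O}_A\rtimes\T$ lies in the usual bootstrap class in~$\KK$; and the discussion above the theorem identifies this crossed product, up to Morita--Rieffel equivalence, with an AF-algebra, so this is clear. Next, the module $M\defeq\K^\T_*(\mathcal{O}_A)$ is concentrated in even degree, being the $\K_0$-group of an AF-algebra, and it admits a length-$1$ projective resolution of the form
\[
0\to R^n\xrightarrow{x-A^{\textup{t}}} R^n\to M\to 0
\]
by presenting the direct limit $\varinjlim(\Z^n,A^{\textup{t}})$; the same holds for $B$. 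Hence $\Ext^2_R(M,M)^-$ vanishes for degree reasons, and Theorem~\ref{thm:Ext_enriched_classification} applies: the pair $(M,0)\inOb\Abel\delta$ is a complete invariant up to $\KK^\T$-equivalence, so any grading-preserving $R$-module isomorphism $\K_*^\T(\mathcal{O}_A)\to\K_*^\T(\mathcal{O}_B)$ lifts to a $\KK^\T$-equivalence.

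For the equivalence of the module condition and shift equivalence of $A,B$ over~$\Z$, I would unfold the description of $\K_*^\T(\mathcal{O}_A)$ as $\varinjlim(\Z^n,A^{\textup{t}})$ with $x$ acting as the shift by $A^{\textup{t}}$. A grading-preserving $R$-module isomorphism is then the same data as an isomorphism of these $\Z[x,x^{-1}]$-modules intertwining the shifts, and this is precisely the classical characterisation of shift equivalence over~$\Z$ of $A^{\textup{t}}$ and $B^{\textup{t}}$ (equivalently, of $A$ and $B$) due to Krieger--Williams; I would cite~\cite{Boyle:Shift_equivalence_Jordan_from} for this reduction.

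The main obstacle I anticipate is not in the argument itself --- the new machinery reduces matters to routine verification --- but in handling the degenerate entries of $A$ precisely enough to assert that $\mathcal{O}_A\rtimes\T$ is Morita--Rieffel equivalent to the AF fixed-point algebra with the advertised $\K_0$ and shift action, and thereby to legitimately invoke the classical shift-equivalence dictionary rather than re-deriving it.
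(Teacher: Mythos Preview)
Your proposal is correct and follows essentially the same route as the paper: the discussion preceding the theorem establishes bootstrap membership via the AF fixed-point algebra, concentration of $\K^\T_*(\mathcal{O}_A)$ in even degree, and vanishing of the relevant $\Ext^2$, so that module isomorphisms lift to $\KK^\T$-equivalences; then the equivalence with shift equivalence over~$\Z$ is deferred to the symbolic-dynamics literature. The only adjustment is bibliographic: the precise dictionary between $\Z[x,x^{-1}]$-module isomorphism of these direct-limit groups and shift equivalence is \cite{Lind-Marcus:Intro_to_symbolic_dynamics}*{Theorem~7.5.7}, whereas \cite{Boyle:Shift_equivalence_Jordan_from} is invoked in the paper only for the later example distinguishing shift equivalence over~$\Z$ from over~$\Z_{\ge0}$.
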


\begin{proof}
  The equivalence of the first two statements follows from the
  argument above.  For the equivalence of the second and third
  statement, see
  \cite{Lind-Marcus:Intro_to_symbolic_dynamics}*{Theorem~7.5.7}.
\end{proof}

\cite{Boyle:Shift_equivalence_Jordan_from}*{Example~2.13} gives two
irreducible non-negative \(4\times 4\)\nb-matrices~\(A\) and~\(B\)
that are shift equivalent over the integers but not over the
\emph{non-negative} integers.  Then the gauge actions on the purely
infinite simple Cuntz--Krieger algebras~\(\mathcal{O}_A\)
and~\(\mathcal{O}_B\) are \(\KK^\T\)\nb-\hspace{0pt}equivalent by
the previous theorem; but the \emph{ordered}
\(\Z[x,x^{-1}]\)-modules \(\K_0(\mathcal{O}_A^\T)\) and
\(\K_0(\mathcal{O}_B^\T)\) are not isomorphic by
\cite{Lind-Marcus:Intro_to_symbolic_dynamics}*{Theorem~7.5.8}.
Hence the shift automorphisms on the gauge fixed-point
algebras~\(\mathcal{O}_A^\T\) and~\(\mathcal{O}_B^\T\) cannot be
stably conjugate.  By Takai duality, the gauge actions
on~\(\mathcal{O}_A\) and~\(\mathcal{O}_B\) cannot be stably
conjugate.  We cannot expect a Kirchberg--Phillips type
classification result for circle actions unless the fixed-point
algebra is also purely infinite and simple.  For the most useful
gauge actions, the fixed-point algebra is AF, however, so that we
cannot expect isomorphisms in~\(\KK^\T\) to lift to
\Star{}isomorphisms.

\begin{remark}
 It was already observed in
 \cite{Rosenberg-Schochet:Kunneth}*{Proposition 10.4} that the
 \(\KK^\T\)\nb-equiv\-a\-lence class of an object in the
 \(\T\)\nb-equivariant bootstrap class with equivariant
 \(\K\)\nb-theory concentrated in one degree is determined by its
 equivariant \(\K\)\nb-theory.
 \cite{Rosenberg-Schochet:Kunneth}*{\S10} contains some more results
 establishing \(\KK^G\)\nb-equiv\-a\-lence in special cases for
 Hodgkin--Lie groups~\(G\).
\end{remark}

\subsection{On the computation of \texorpdfstring{$\Ext^2$}{Ext²}}
\label{sec:compute_Ext2}

We now describe \(\Ext^*_R(V,W)\) for two general \(R\)\nb-modules
\(V\) and~\(W\), where \(R= \Z[x,x^{-1}]\).  We view an
\(R\)\nb-module~\(V\) as an Abelian group with an
automorphism~\(x_V\), namely, the action of the generator \(x\in
R\).

The ring~\(R\) has a very short \(R\)\nb-bimodule resolution
\[
0 \to R\otimes R \xrightarrow{x\otimes1-1\otimes x}
R\otimes R \xrightarrow{\textup{mult}} R \to 0.
\]
This remains exact when we apply the functor \(\blank\otimes_R V\)
for a left \(R\)\nb-module~\(V\), and this gives a short exact
sequence of \(R\)\nb-modules
\[
0\to
R\otimes V \xrightarrow{x\otimes1-1\otimes x_V} R\otimes V
\xrightarrow{\textup{mult}} V \to 0
\]
for any \(R\)\nb-module~\(V\).  Given another \(R\)\nb-module~\(W\),
the long exact cohomology sequence for this short exact sequence becomes
\begin{multline*}
  0 \to \Hom_R(V,W) \to \Hom_R(R\otimes V,W) \to
  \Hom_R(R\otimes V,W)
  \\\to \Ext^1_R(V,W) \to \Ext^1_R(R\otimes V,W) \to
  \Ext^1_R(R\otimes V,W)
  \\\to \Ext^2_R(V,W) \to \Ext^2_R(R\otimes V,W) \to
  \Ext^2_R(R\otimes V,W) \to 0.
\end{multline*}
This simplifies considerably because
\[
\Ext^n_R(R\otimes V,W) \cong \Ext^n_\Z(V,W)
\]
by adjoint associativity.  Thus we get a long exact sequence
\begin{multline*}
  0 \to \Hom_R(V,W) \to \Hom_\Z(V,W) \to \Hom_\Z(V,W)
  \\\to \Ext^1_R(V,W) \to \Ext^1_\Z(V,W) \to \Ext^1_\Z(V,W) \to
  \Ext^2_R(V,W) \to 0.
\end{multline*}
Here the maps \(\Hom_\Z(V,W) \to \Hom_\Z(V,W)\) and
\(\Ext^1_\Z(V,W) \to \Ext^1_\Z(V,W)\) are \(f\mapsto (x_W)_*f-
(x_V)^*f\), using the automorphisms \(x_V\) and~\(x_W\) of
\(V\) and~\(W\).  Hence
\begin{equation}
  \label{eq:Ext_2_Laurent}
  \Ext^2_R(V,W) \cong \coker \bigl((x_W)_*-(x_V)^*\bigr)\colon
  \Ext^1_\Z(V,W) \to \Ext^1_\Z(V,W).
\end{equation}

\begin{remark}
  We may view this cokernel as the first Hochschild cohomology
  for~\(R\) with coefficients in \(\Ext^1_\Z(V,W)\) with the
  induced \(R\)\nb-bimodule structure.  The kernel of this map
  is the zeroth Hochschild cohomology.  The above long exact
  sequence is equivalent to a spectral sequence
  \[
  \textup{HH}^p\bigl(R,\Ext^q_\Z(V,W)\bigr) \Rightarrow \Ext^{p+q}_R(V,W).
  \]
\end{remark}

Eusebio Gardella shows in \cites{Gardella:Classif_circle_actions_I,
Gardella:Classif_circle_actions_II} that for circle actions on unital
Kirchberg algebras~\(A\) with the Rokhlin property, such that~\(A\)
satisfies the Universal Coefficient Theorem and has finitely generated
\(\K\)\nb-theory groups, the action of the generator of~\(R\) on
equivariant \(\K\)\nb-theory is the identity (this is analogous to the
situation of finite group actions with the Rokhlin
property, see~\cite{Phillips:Freeness_actions_finite_groups}), and
equivariant \(\K\)\nb-theory together with the unit class is a complete
invariant.  Moreover, \(\K_0(A)\cong \K_1(A) \cong \K_0(A^\T)\oplus
\K_1(A^\T)\), and every pair \((G_0,G_1)\) of finitely generated
Abelian groups with any unit class in~\(G_0\) may be realised as
\(\bigl(\K_0(A^\T),\K_1(A^\T)\bigr)\).  

If~\(x\) acts identically on~\(V\) and~\(W\), then \(\Ext^2_R(V,W)
\cong \Ext^1_\Z(V,W)\) by~\eqref{eq:Ext_2_Laurent}.  Therefore,
there must be a unique obstruction class in
\(\Ext^1_\Z\bigl(\K_*(A^\T), \K_{*+1}(A^\T)\bigr)\) that comes from
a Rokhlin action on a unital Kirchberg algebra.  We do
not know, however, which obstruction class this is.

Another classification result for Rokhlin actions of finite groups
on Kirchberg algebras was proved by Masaki
Izumi~\cite{Izumi:Finite_group}.

\subsection{Nekrashevych's \texorpdfstring{$\Cst$}{C*}-algebras of self-similar groups}
\label{sec:Nekrashevych}

Nekrashevych~\cite{Nekrashevych:Cstar_selfsimilar} constructs purely
infinite simple \(\Cst\)\nb-algebras with a gauge action of~\(\T\)
from self-similar groups.  He proves that the conjugacy class of this
gauge action essentially determines the underlying self-similar group
and hence is a very fine invariant.  This is, however, far from true
for the \(\KK^\T\)-equivalence class.

We consider only the particular case considered in
\cite{Nekrashevych:Cstar_selfsimilar}*{Theorem 4.8} to use
Nekrashevych's \(\K\)\nb-theory computation.  The self-similar group~\(G\)
in question is the iterated monodromy group of a post-critically
finite, hyperbolic, rational function~\(f\) on~\(\hat{\C}\).  Let~\(n\)
be the (mapping) degree of this rational function, that is, each
non-critical point has precisely~\(n\) preimages.  The function~\(f\)
has at most finitely many attracting cycles; let their lengths be
\(\ell_1,\ldots,\ell_c\), listed with repetitions.  Thus~\(f\) has
\(c\)~attracting cycles.

It is asserted in
\cite{Nekrashevych:Cstar_selfsimilar}*{Theorem 4.8} that the
\(\K\)\nb-theory of the gauge fixed-point algebra of the
\(\Cst\)\nb-algebra~\(\mathcal{O}_G\) associated to~\(f\) is
\(\Z[1/n]\)~in even degrees and \(\Z^{k-1}\)~in odd degrees,
where \(k=\sum_{i=1}^c \ell_i\).  We can be more precise: the
proof of \cite{Nekrashevych:Cstar_selfsimilar}*{Theorem 4.8}
also gives the \(\T\)\nb-equivariant \(\K\)\nb-theory
of~\(\mathcal{O}_G\).

First, the fixed-point algebra is Morita--Rieffel equivalent to the
crossed product in this case, so that the \(\K\)\nb-theory of the
gauge fixed-point algebra is isomorphic to the \(\T\)\nb-equivariant
\(\K\)\nb-theory and carries a \(\Z[x,x^{-1}]\)-module structure.  The
action of~\(x\) on this module is given by multiplication by~\(n\) on
the even part, as for the Cuntz algebra~\(\mathcal{O}_n\).  Thus
\(\K_0^\T(\mathcal{O}_G)\cong R/(x-n)\) has a projective resolution of
length~\(1\).

The odd part is the quotient of
\(H=\Z^{\ell_1}\oplus\dotsb\oplus\Z^{\ell_c}\) by the
diagonally embedded copy of~\(\Z\).  We may view~\(H\) as the
space of functions from the union of the attracting cycles
of~\(f\) to~\(\Z\).  The generator~\(x\) acts like~\(f\) on
these functions, that is, it is a cyclic permutation in each
copy of~\(\Z^{\ell_i}\).  Thus we get the quotient of the
module
\[
V \defeq \bigoplus_{i=1}^c \Z[x_i,x_i^{-1}]/(x_i^{\ell_i}-1)
\]
by the copy of~\(\Z\) generated by \(N_i\defeq
1+x_i+\dotsb+x_i^{\ell_i-1}\) in each component.

\begin{lemma}
  \label{lem:Nekrashevych_no_Ext}
  \(\Ext^2_R\bigl(\K^\T_*(\mathcal{O}_G),\K^\T_{*+1}(
  \mathcal{O}_G)\bigr)=0\).
\end{lemma}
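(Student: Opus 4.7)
The plan is to decompose $M \defeq \K^\T_*(\mathcal{O}_G)$ according to its $\Z/2$-grading. The even part is $M_0 = R/(x-n)$, and the odd part is $M_1 = V/\Z$ with $V = \bigoplus_{i=1}^c R/(x^{\ell_i}-1)$, so that the assertion amounts to the joint vanishing of $\Ext^2_R(M_0, M_1)$ and $\Ext^2_R(M_1, M_0)$. The first of these is immediate: $M_0$ admits the length-one projective resolution
\[
0 \to R \xrightarrow{\cdot(x-n)} R \to R/(x-n) \to 0,
\]
so $\Ext^{\geq 2}_R(M_0, \blank)=0$.

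For $\Ext^2_R(M_1, M_0)$ I would apply the long exact $\Ext$-sequence to the short exact sequence $0 \to \Z \to V \to V/\Z \to 0$, where $\Z$ carries the trivial $R$-action and is embedded diagonally via $1 \mapsto (N_1, \ldots, N_c)$. Each summand $R/(x^{\ell_i}-1)$ of $V$ has a length-one projective resolution of the same shape as $M_0$, hence $\Ext^2_R(V, M_0)=0$; and the task reduces to showing that the connecting map
\[
\partial \colon \Ext^1_R(V, M_0) \longrightarrow \Ext^1_R(\Z, M_0)
\]
induced by the inclusion $\Z \hookrightarrow V$ is surjective.

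Using the resolutions $0 \to R \xrightarrow{\cdot(x-1)} R \to \Z \to 0$ and $0 \to R \xrightarrow{\cdot(x^{\ell_i}-1)} R \to R/(x^{\ell_i}-1) \to 0$, the codomain of $\partial$ identifies with $M_0/(x-1)M_0 \cong \Z/(n-1)$ and its domain with $\bigoplus_i M_0/(x^{\ell_i}-1)M_0 \cong \bigoplus_i \Z/(n^{\ell_i}-1)$. The key identity $(x-1)N_i = x^{\ell_i}-1$ lets me lift $1 \mapsto N_i$ to a chain map of resolutions that is multiplication by $N_i$ in degree zero and the identity in degree one. Consequently the $i$\nb-th component of $\partial$ is the canonical surjection $\Z/(n^{\ell_i}-1) \twoheadrightarrow \Z/(n-1)$, and summing over $i$ gives the surjectivity of $\partial$.

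The only step that requires any care is the identification of the $i$\nb-th component of $\partial$ with the canonical surjection, which rests on the observation that the degree-one lift may be taken to be the identity thanks to $(x-1)N_i = x^{\ell_i}-1$. I do not foresee any serious obstacle beyond this bookkeeping.
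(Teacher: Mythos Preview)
Your argument is correct, but it takes a noticeably longer route for \(\Ext^2_R(M_1,M_0)\) than the paper does.  For \(\Ext^2_R(M_0,M_1)\) you agree with the paper: \(M_0=\K_0\) has projective dimension~\(1\), so \(\Ext^2_R(M_0,\blank)=0\).  For the other direction, the paper simply observes that \(M_1=\K_1\cong\Z^{k-1}\) is \emph{free as an Abelian group}, so \(\Ext^1_\Z(M_1,M_0)=0\), and then invokes the identification
\(\Ext^2_R(V,W)\cong\coker\bigl((x_W)_*-(x_V)^*\colon\Ext^1_\Z(V,W)\to\Ext^1_\Z(V,W)\bigr)\)
from~\eqref{eq:Ext_2_Laurent} to conclude \(\Ext^2_R(M_1,M_0)=0\) immediately.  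Your approach instead unpacks the \(R\)\nb-module structure of~\(M_1\) via the extension \(\Z\into V\prto M_1\) and checks surjectivity of the induced map on~\(\Ext^1_R\) by an explicit chain-level lift; this is self-contained and avoids the general formula~\eqref{eq:Ext_2_Laurent}, at the cost of a concrete computation.  One small terminological quibble: the map you call~\(\partial\) is not a connecting homomorphism but the ordinary map in the long exact sequence induced by \(\Z\hookrightarrow V\); your computation of it is nonetheless correct.
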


\begin{proof}
  Let \(\K_*\defeq \K^\T_*(\mathcal{O}_G)\).  Since \(\K_1\) is free
  as an Abelian group, \(\Ext^1_\Z(\K_1,\K_0)=0\) and hence
  \(\Ext^2_R(\K_1,\K_0)=0\).  Since~\(\K_0\) has a projective
  \(R\)\nb-module resolution of length~\(1\)
  by~\eqref{eq:length-one-resolution_On}, \(\Ext^2_R(\K_1,\K_0)=0\) as
  well.
\end{proof}

Since~\(\mathcal{O}_G^\T\) is the \(\Cst\)\nb-algebra of an amenable
groupoid, it belongs to the bootstrap class.  Hence so does the
Morita--Rieffel equivalent \(\Cst\)\nb-algebra
\(\mathcal{O}_G\rtimes\T\), so our classification results apply by
Proposition~\ref{pro:bootstrap_in_KKG}.
Lemma~\ref{lem:Nekrashevych_no_Ext} shows that, up to
circle-equivariant \(\KK\)\nb-equivalence, the
\(\Cst\)\nb-algebra~\(\mathcal{O}_G\) is classified completely by
the \(\Z/2\)\nb-graded \(\Z[x,x^{-1}]\)\nb-module
\(\K^\T_*(\mathcal{O}_G)\).  This module only remembers the
degree~\(d\) of~\(f\) and the multiset of lengths~\(\ell_i\), so it
is a rather coarse invariant.

It would be very interesting to refine our invariant to detect the
conjugacy class of the gauge action because this determines the
action of~\(f\) on its Julia set up to topological conjugacy by
Nekrashevych's main result
(\cite{Nekrashevych:Cstar_selfsimilar}*{Section 4.2}).
Unfortunately, we know no useful refinements for our invariant.
Both for~\(\mathcal{O}_G\) and for Cuntz--Krieger algebras, the
fixed-point algebra of the gauge action has a unique trace.  For
Cuntz--Krieger algebras, the order structure on~\(\K_0^\T\) gives a
finer invariant (see Section~\ref{sec:invariant_Cuntz-Krieger}), but
for~\(\mathcal{O}_G\), the group \(\K_0^\T(\mathcal{O}_G) \cong
\Z[1/d]\) carries no interesting order structure.

\section{Kasparov theory for \texorpdfstring{$\Cst$}{C*}-algebras over unique path spaces}
\label{sec:KK_X}

Let~\(X\) be a finite \(T_0\)\nb-space.  In this section, we
consider Kirchberg's ideal-related \(\KK\)-theory
\(\Tri\defeq\KK(X)\), following~\cites{Meyer-Nest:Bootstrap,
  Meyer-Nest:Filtrated_K}.  Let \(i_x\C\inOb\Tri\) denote the
\(\Cst\)\nb-algebra of complex numbers~\(\C\) equipped with the
continuous map \(\Prim(\C)\to X\) taking the unique element of
\(\Prim(\C)\) to \(x\in X\).  The bootstrap class \(\Boot(X)\)
in~\(\Tri\) is the localising subcategory generated by the
collection \(\{i_x\C\mid x\in X\}\) of one-dimensional
\(\Cst\)\nb-algebras over~\(X\) (see
\cite{Meyer-Nest:Bootstrap}*{Definition 4.11}).

We apply the homological machinery
from~\cite{Meyer-Nest:Homology_in_KK} to the family of functors
represented by the objects~\(i_x\C\), respectively.  Let~\(A(U_x)\)
be the distinguished ideal of~\(A\) corresponding to the minimal
open neighbourhood~\(U_x\) of~\(x\) in~\(X\).  The adjointness
relations in \cite{Meyer-Nest:Bootstrap}*{Proposition 3.13}
specialise to
\begin{equation}
  \label{eq:adjointness_relation}
 \KK_*(X;i_x\C,A)\cong\KK_*\bigl(\C,A(U_x)\bigr)
 \cong \K_*\bigl(A(U_x)\bigr).
\end{equation}
For \(x\in X\), consider the stable homological functor
\[
F_x\colon\Tri\to\Ab_\mathrm{c}^{\Z/2},\quad A\mapsto\KK_*(X;i_x\C,A)
\]
and the homological ideal \(\Ideal_x\defeq\ker F_x\).
Since
\[
\KK_*(X;i_x\C,A)\cong\Hom_{\Ab_\mathrm{c}^{\Z/2}}\bigl(\Z[0],F_x(A)\bigr),
\]
the adjoint functor~\(F_x^\lad\) takes the free rank-one Abelian group
in even degree to the object~\(i_x\C\).
\cite{Meyer-Nest:Homology_in_KK}*{Theorem 57} implies that~\(F_x\) is
the universal \(\Ideal_x\)\nb-exact functor and that~\(\Ideal_x\) has
enough projective objects.

Now we consider the homological ideal
\(\Ideal\defeq\bigcap_{x\in X}\Ideal_x\).
\cite{Meyer-Nest:Bootstrap}*{Theorem 4.17} gives \(\KK_*(X;A,B)=0\)
for all \(\Ideal\)\nb-contractible~\(B\)
if and only if~\(A\) belongs to~\(\Boot(X)\).
By \cite{Meyer-Nest:Homology_in_KK}*{Proposition~55},
the ideal~\(\Ideal\) has enough projective objects.  An argument as in
\cite{Meyer-Nest:Filtrated_K}*{Section  4.3} shows that the universal
\(\Ideal\)\nb-exact stable homological functor is
\[
\XK\defeq\KK_*(X;\Repr,\blank)\colon\Tri
\to \Modc{\KK_*(X;\Repr,\Repr)^\op},
\]
where \(\Repr\defeq\bigoplus_{x\in X} i_x\C\) and
\(\Modc{\KK_*(X;\Repr,\Repr)^\op}\) denotes the category of countable
\(\Z/2\)-graded \emph{right} modules over the \(\Z/2\)-graded ring
\(\KK_*(X;\Repr,\Repr)\).

Partially order~\(X\) by \(x\preceq y\) if and only if
\(x\in\overline{\{y\}}\), if and only if \(y\in U_x\).
Equation~\eqref{eq:adjointness_relation} implies
\[
\KK_*(X;i_x\C,i_y\C)\cong
\begin{cases}
 \Z[0]& \textup{if \(x\preceq y\),}\\
 0 & \textup{otherwise,}
\end{cases}
\]
for all \(x,y\in X\).  The proof of~\eqref{eq:adjointness_relation}
shows that the generator of \(\KK_0(X;i_x\C,i_y\C)=\Z\) for
\(x\preceq y\) is the class~\(i_x^y\) of the identity map on~\(\C\),
viewed as a \Star{}homomorphism over~\(X\) from~\(i_x\C\)
to~\(i_y\C\).  Since \(i_y^z\circ i_x^y=i_x^z\), the \(\Z/2\)-graded
ring \(\KK_*(X;\Repr,\Repr)^\op\) is isomorphic to the integral
incidence algebra~\(\Z[X]\) of the poset~\((X,\preceq)\) in even
degree and vanishes in odd degree; here we use the convention
that~\(\Z[X]\) is the free Abelian group generated by
elements~\(f_{x\preceq y}\) for all pairs~\((x,y)\) with \(x\preceq
y\); the multiplication is defined by \(f_{x\preceq y} f_{y\preceq
  z}= f_{x\preceq z}\).

We write \(x\to y\) for \(x,y\in X\) if \(x\succ y\) and there is no
\(z\in X\) with \(x\succ z\succ y\).  Then \(x\succ y\) if and only if
there is a chain \(x=x_0\to x_1\to \dotsb\to x_\ell= y\) with some
\(x_1,\dotsc,x_{\ell-1}\in X\).

\begin{definition}
  \label{def:unique_path_space}
  A finite \(T_0\)\nb-space~\(X\) is called a \emph{unique path space}
  if the chain \(x=x_0\to x_1\to \dotsb\to x_\ell= y\) is unique for
  all \(x,y\in X\) with \(x\succ y\).
\end{definition}

If~\(X\) is a unique path space, then~\(\Z[X]\) is the integral path
algebra of the quiver \((X,\to)\), where paths are concatenated such
that arrows point to the left.

There is a canonical family of orthogonal idempotent elements \(e_x\in
\Z[X]\) for \(x\in X\) with \(\sum_{x\in X}e_x=1\).  Viewing
\(\Z[X]\)-bimodules as modules over the ring \(\Z[X]\otimes_\Z\Z[X^\op]\),
we see that \(P_{x\otimes y}\defeq\Z[X]e_x\otimes e_y\Z[X]\) is a
projective \(\Z[X]\)-bimodule (corresponding to the idempotent
\(e_x\otimes e_y\)).

\begin{lemma}
  \label{lem:resolution_ups}
  Let~\(X\) be a unique path space.  There is a length-one projective
  bimodule resolution
  \[
  0\to \bigoplus_{y\to x} P_{x\otimes y}
  \to \bigoplus_{x\in X} P_{x\otimes x}
  \to \Z[X]\to 0.
  \]
  The second map is determined by the maps \(P_{x\otimes x}\to
  \Z[X]\), \(a\otimes b\mapsto a\cdot b\), the first one by the maps
  \(P_{x\otimes y} \to P_{x\otimes x}\oplus P_{y\otimes y}\),
  \(a\otimes b\mapsto (a\otimes f_{x\preceq y}b,
  - a f_{x\preceq y}\otimes b)\).
\end{lemma}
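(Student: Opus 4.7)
The plan is to verify exactness directly by identifying an explicit $\Z$-basis for each of the three bimodules and tracking how the boundary maps act on basis elements. Since $\Z[X]$ has $\Z$-basis $\{f_{z\preceq w}:z\preceq w\}$ and $e_a\Z[X]e_b$ equals $\Z\cdot f_{a\preceq b}$ when $a\preceq b$ and vanishes otherwise, the bimodule $P_{a\otimes b}=\Z[X]e_a\otimes e_b\Z[X]$ has $\Z$-basis $\{f_{z\preceq a}\otimes f_{b\preceq w}:z\preceq a,\ b\preceq w\}$. The crucial observation is that all maps in the proposed complex preserve the ``endpoint data'' $(z,w)$, so the whole sequence decomposes as a direct sum of subcomplexes indexed by pairs $z\preceq w$. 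I would use this decomposition to reduce the problem to a one-line computation for each such pair.

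The two easy directions I would dispatch first. Surjectivity of $\bigoplus_x P_{x\otimes x}\to \Z[X]$ is immediate because any generator $f_{z\preceq w}$ is the image of $e_z\otimes f_{z\preceq w}\in P_{z\otimes z}$. For vanishing of the composition, the basis vector $f_{z\preceq x}\otimes f_{y\preceq w}$ of $P_{x\otimes y}$ (with $y\to x$) maps to
\[
(f_{z\preceq x}\otimes f_{x\preceq w},\ -f_{z\preceq y}\otimes f_{y\preceq w}),
\]
and multiplying out both entries yields $f_{z\preceq w}$ twice with opposite signs, which cancel.

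The main step is exactness in the middle together with injectivity of the leftmost map, and this is where the unique path hypothesis enters. For fixed $(z,w)$ with $z\preceq w$, the interval $\{x:z\preceq x\preceq w\}$ coincides with the vertices on the unique chain $w=x_0\to x_1\to \cdots \to x_\ell = z$, and the covering relations $y\to x$ satisfying $z\preceq x$ and $y\preceq w$ are exactly the edges $x_{i-1}\to x_i$ of this chain. Consequently the $(z,w)$-isotypic subcomplex takes the form
\[
0 \to \Z^{\ell} \to \Z^{\ell+1} \to \Z \to 0,
\]
where the $i$-th generator of $\Z^\ell$ maps to $e_i - e_{i-1}$ and each $e_i$ maps to $1$; this is the augmented simplicial chain complex of a one-dimensional simplex chain, which is manifestly exact. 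Summing over all $(z,w)$ with $z\preceq w$ completes the proof.

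The principal obstacle I anticipate is the combinatorial bookkeeping in setting up the $(z,w)$-decomposition: one must verify carefully that basis elements of the leftmost bimodule are in bijection with triples consisting of a pair $z\preceq w$ together with an edge of the unique chain from $w$ to $z$, and similarly for the middle bimodule with vertices instead of edges. The unique path assumption is essential here—without it, several distinct chains from $w$ to $z$ would contribute to the same $(z,w)$-component, producing relations among them that would require a longer resolution with higher syzygies rather than the length-one resolution asserted.
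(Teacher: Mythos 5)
Your proposal is correct and follows essentially the same route as the paper's proof: both decompose the complex into subcomplexes indexed by the pair of endpoints $(z,w)$ -- which, by the unique path property, is the same as decomposing by the underlying path $w\to\cdots\to z$ -- and then verify exactness piece by piece (the paper by an explicit coefficient computation, you by recognizing each piece as the augmented chain complex of a contractible path graph, which is the same calculation in slightly slicker packaging).
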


\begin{proof}
  We clearly have a complex of bimodule maps.  The underlying
  Abelian groups of the three modules are free.  To verify
  exactness, we choose the following canonical bases.  A basis
  for~\(\Z[X]\) is given by paths \(p=(x_1\to\cdots\to x_k)\) of
  non-negative length in~\(X\); similarly, a basis for the middle
  group is given by paths \(p_l=(x_1\to\cdots\to x_l^*\to\cdots\to
  x_k)\) with a marked vertex position \(l\in\{1,\ldots,k\}\), and a
  basis for the left group is given by paths
  \(p_{l,l+1}=(x_1\to\cdots\to x_l^*\to x_{l+1}^*\to \cdots\to
  x_k)\) with two marked, consecutive vertices \(x_l\)
  and~\(x_{l+1}\).  In this picture, the right map simply forgets the
  position of the marked vertex.  Hence it is surjective.  The left
  map takes a doubly marked path \(p_{l,l+1}\) to the linear
  combination \(p_l-p_{l+1}\) of singly marked paths.  Since this map
  does not change the underlying path~\(p\), it suffices to check
  injectivity of the left map on elements of the form
  \(\sum_{l=1}^{k-1} n_l p_{l,l+1}\).  Applying the left map yields
  \(n_1 p_1+\left( \sum_{l=2}^{k-1} (n_l-n_{l-1}) p_{l}\right) -
  n_{k-1} p_k\).  But this sum can only vanish if \(n_1=0\), hence
  \(n_2=0\), and so on.  Finally, we show exactness in the middle.
  The kernel of the right map is generated by elements of the form
  \(\sum_{l=1}^{k} n_l p_l\) with \(\sum_{l=1}^{k} n_l
  =0\).  Rewriting such an element as \(\sum_{l=1}^{k-1}
  \left(\sum_{j=1}^l n_j\right) (p_l - p_{l+1})\) using
  \(-\sum_{l=1}^{k-1}n_l=n_k\) shows that it belongs to the image of
  the left map.
\end{proof}

\begin{proposition}
   \label{pro:UP_dimension}
   If~\(X\) is a unique path space, then \(\KK_*(X;\Repr,\Repr)^\op\)
   has cohomological dimension at most~\(2\).
\end{proposition}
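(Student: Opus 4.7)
The plan is to reduce the claim to a global-dimension bound for the integral incidence algebra. Since the ring $\KK_*(X;\Repr,\Repr)^\op$ is concentrated in even degree and is isomorphic to $\Z[X]$ there, a $\Z/2$-graded right module over it is just a pair of ordinary right $\Z[X]$-modules; so it suffices to show that $\Z[X]$ has global dimension at most~\(2\).

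Let $M$ be a right $\Z[X]$-module. I would first apply $M\otimes_{\Z[X]}-$ to the bimodule resolution from Lemma~\ref{lem:resolution_ups}. The crucial observation is that each $P_{x\otimes y}=\Z[X]e_x\otimes_\Z e_y\Z[X]$ is projective, hence flat, as a left $\Z[X]$-module: it is a direct sum of copies of the projective left module $\Z[X]e_x$, indexed by a $\Z$-basis of the free abelian group $e_y\Z[X]$. Tensoring therefore preserves exactness and yields a short exact sequence of right $\Z[X]$-modules
\[
0\to \bigoplus_{y\to x} Me_x\otimes_\Z e_y\Z[X]
\to \bigoplus_{x\in X} Me_x\otimes_\Z e_x\Z[X]
\to M\to 0.
\]

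Next I would resolve the two outer terms. A module of the form $V\otimes_\Z e_z\Z[X]$ with $V$ an abelian group admits a length-one projective resolution over $\Z[X]$: choose a length-one free $\Z$-resolution $0\to F_1\to F_0\to V\to 0$, which exists because $\Z$ has global dimension~\(1\), and tensor over $\Z$ with the $\Z$-flat module $e_z\Z[X]$. Each $F_i\otimes_\Z e_z\Z[X]$ is a direct sum of copies of $e_z\Z[X]$, which is a projective right $\Z[X]$-module since it is a direct summand of $\Z[X]$. Hence both outer terms in the sequence above have projective dimension at most~\(1\).

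Finally, I would splice: given a short exact sequence $0\to L\to P\to M\to 0$ of right $\Z[X]$-modules with $L$ and $P$ of projective dimension at most~\(1\), the long exact sequence of $\Ext$ forces $M$ to have projective dimension at most~\(2\). Combining this with the resolutions constructed above bounds the projective dimension of an arbitrary $M$ by~\(2\), which gives the claim. The only technical point to verify carefully is the flatness assertion used in the first step, both of $P_{x\otimes y}$ as a left $\Z[X]$-module and of $e_z\Z[X]$ as a $\Z$-module; the remaining steps are formal homological algebra, so I expect this verification to be the only real obstacle.
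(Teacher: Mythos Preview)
Your proposal is correct and follows essentially the same route as the paper: both arguments tensor the bimodule resolution of Lemma~\ref{lem:resolution_ups} with a given module to obtain a short exact sequence whose outer terms are ``induced'' from abelian groups, and then use that~\(\Z\) has global dimension~\(1\) to conclude. The only cosmetic differences are that the paper tensors on the right with a left module~\(V\) and then reads off \(\Ext^n_{\Z[X]}(P_{x\otimes y}\otimes V,W)\cong\Ext^n_\Z(V_y,W_x)\) via adjointness and a long exact sequence, whereas you tensor on the left with a right module~\(M\) and splice explicit length-one projective resolutions; the paper's version has the side benefit of producing the long exact sequence displayed after the proposition, which is used later to compute \(\Ext^2_{\Z[X]}\).
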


In fact, it is easy to see that the cohomological dimension of
\(\KK_*(X;\Repr,\Repr)^\op\) is equal to~\(2\) unless the space~\(X\)
is discrete (in which case it is~\(1\)).

\begin{proof}
  Tensoring the above short exact bimodule sequence over~\(\Z[X]\)
  with a left \(\Z[X]\)-module~\(V\) gives the short exact sequence
  \[
  0 \to \bigoplus_{y\to x} P_{x\otimes y}\otimes V
  \to \bigoplus_{x\in X} P_{x\otimes x}\otimes V
  \to V \to 0.
  \]
  We have \(P_{x\otimes y}\otimes_{\Z[X]} V\cong \Z[X]e_x\otimes_\Z
  V_y\), where~\(V_y\) is the entry group of the module~\(V\) at~\(y\).
  It follows that
  \(\Ext^n_{\Z[X]}(P_{x\otimes y}\otimes V,W) \cong \Ext^n_\Z(V_y,W_x)\).
  The long exact cohomology sequence for the functor
  \(\Hom_{\Z[X]}(\blank,W)\) applied to the above short exact sequence
  is thus of the form
  \begin{multline*}
    0 \to \Hom_{\Z[X]}(V,W) \to \bigoplus_{x\in X}\Hom_\Z(V_x,W_x)
    \to \bigoplus_{y\to x}\Hom_\Z(V_y,W_x)
    \\\to \Ext^1_{\Z[X]}(V,W) \to \bigoplus_{x\in X}\Ext^1_\Z(V_x,W_x)
    \to \bigoplus_{y\to x}\Ext^1_\Z(V_y,W_x)
    \\\to \Ext_{\Z[X]}^2(V,W) \to 0,
  \end{multline*}
  and \(\Ext_{\Z[X]}^n\) vanishes for \(n\geq 3\) because
  \(\Ext_{\Z}^n\) vanishes for \(n\geq 2\).
\end{proof}

The maps \(\bigoplus_{x\in X}\Ext^n_\Z(V_x,W_x) \to \bigoplus_{y\to
  x}\Ext^n_\Z(V_y,W_x)\) in the exact sequence above are the sum of
the maps
\[
\Ext^n_\Z(V_x,W_x)\oplus\Ext^n_\Z(V_y,W_y)
\xrightarrow{\bigl((i_W)_*,-(i_V)^*\bigr)}\Ext^n_\Z(V_y,W_x),
\]
induced by the arrows \(i\colon y\to x\) in~\(X\).  This gives a
scheme for computing the groups \(\Ext^n_{\Z[X]}(V,W)\).  As in
Section~\ref{sec:compute_Ext2}, the above long exact sequence is
equivalent to a spectral sequence
\[
\textup{HH}^p\bigl(\Z[X],\Ext^q_\Z(V,W)\bigr)
\Rightarrow \Ext^{p+q}_{\Z[X]}(V,W).
\]

\begin{definition}
  A \(\Cst\)\nb-algebra over~\(X\) is called a \emph{Kirchberg
    \(X\)\nb-algebra} if it is separable, tight (see
    \cite{Meyer-Nest:Bootstrap}*{Definition 5.1}),
    \(\mathcal{O}_\infty\)\nb-absorbing and nuclear.
\end{definition}

Combining Theorem~\ref{thm:Ext_enriched_classification} and
Proposition~\ref{pro:UP_dimension} with Kirchberg's Classification
Theorem in~\cite{Kirchberg:Michael}, we get the following purely
algebraic complete classification of Kirchberg \(X\)\nb-algebras in
the bootstrap class~\(\Boot(X)\):

\begin{corollary}
  Let~\(X\) be a unique path space.  Then the functor~\(\XK\delta\)
  induces a bijection between the set of \Star{}isomorphism classes
  over~\(X\) of stable Kirchberg \(X\)\nb-algebras in the bootstrap
  class~\(\Boot(X)\) and the set of isomorphism classes in the
  category \(\Modc{\Z [X]}^{\Z/2} \delta\).  Every isomorphism in
  \(\Modc{\Z[X]}^{\Z/2}\delta\) lifts to a \Star{}iso\-mor\-phism
  over~\(X\).
\end{corollary}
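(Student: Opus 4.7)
The plan is to assemble the corollary from Theorem~\ref{thm:Ext_enriched_classification} together with Kirchberg's Classification Theorem from~\cite{Kirchberg:Michael}: the corollary is a concatenation of a purely algebraic classification inside \(\KK(X)\) with an analytic lift to \Star{}isomorphism of \(\Cst\)\nb-algebras over~\(X\).

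First I would verify that the hypotheses of Theorem~\ref{thm:Ext_enriched_classification} are met for \(\Tri = \KK(X)\) and the ideal \(\Ideal = \bigcap_{x \in X} \Ideal_x\).  The Abelian category \(\Abel = \Modc{\Z[X]}^{\Z/2}\) is \splitt{} by Example~\ref{exa:Ztwo_graded_split} applied to \(R = \Z[X]\), and it has global dimension at most~\(2\) by Proposition~\ref{pro:UP_dimension}.  Next I would identify the localising subcategory \(\gen{\Proj_\Ideal}\) with the bootstrap class \(\Boot(X)\): by \cite{Meyer:Homology_in_KK_II}*{Theorem 3.22}, \(\hat A \inOb \gen{\Proj_\Ideal}\) is equivalent to \(\KK_*(X;\hat A,B) = 0\) for every \(\Ideal\)\nb-contractible \(B\), and by \cite{Meyer-Nest:Bootstrap}*{Theorem 4.17} this in turn characterises \(\Boot(X)\).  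Theorem~\ref{thm:Ext_enriched_classification} then produces, at the level of \(\KK(X)\), a bijection between \(\KK(X)\)\nb-equivalence classes of objects in \(\Boot(X)\) and isomorphism classes in \(\Modc{\Z[X]}^{\Z/2}\delta\); moreover every morphism in \(\Modc{\Z[X]}^{\Z/2}\delta\) lifts to \(\KK(X)\), so every isomorphism in the target lifts to a \(\KK(X)\)\nb-equivalence.

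To translate this from \(\KK(X)\) to genuine \Star{}isomorphism over~\(X\), I would invoke Kirchberg's Classification Theorem~\cite{Kirchberg:Michael} in two complementary ways.  \emph{Existence}: every object of \(\Boot(X)\) is \(\KK(X)\)\nb-equivalent to a stable Kirchberg \(X\)\nb-algebra in \(\Boot(X)\) (one may, for instance, tensor a given representative with \(\mathcal{O}_\infty\) and with the compacts, staying inside \(\Boot(X)\)), so that the restriction of~\(\XK\delta\) to stable Kirchberg \(X\)\nb-algebras remains surjective on isomorphism classes.  \emph{Rigidity}: any \(\KK(X)\)\nb-equivalence between two stable Kirchberg \(X\)\nb-algebras lifts to a \Star{}isomorphism over~\(X\); this simultaneously yields injectivity of \(\XK\delta\) on \Star{}isomorphism classes and the final lifting statement of the corollary.

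The main obstacle for the corollary is not really internal to this final deduction; it lies upstream, in Proposition~\ref{pro:UP_dimension} (the cohomological-dimension-two bound for the incidence algebra of a unique path space) and in the identification of \(\gen{\Proj_\Ideal}\) with \(\Boot(X)\).  Once those two inputs are in hand, the corollary follows formally from Theorem~\ref{thm:Ext_enriched_classification} and \cite{Kirchberg:Michael}, with no additional algebraic or analytic work required.
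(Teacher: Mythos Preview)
Your argument is essentially the paper's own: the corollary is stated immediately after the sentence ``Combining Theorem~\ref{thm:Ext_enriched_classification} and Proposition~\ref{pro:UP_dimension} with Kirchberg's Classification Theorem in~\cite{Kirchberg:Michael}\ldots'', and you have correctly unpacked that combination, including the identification \(\gen{\Proj_\Ideal}=\Boot(X)\) via \cite{Meyer-Nest:Bootstrap}*{Theorem 4.17}.

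One small correction: your parenthetical justification for the existence step---tensoring a given object of~\(\Boot(X)\) with \(\mathcal{O}_\infty\otimes\Comp\)---does not produce a Kirchberg \(X\)\nb-algebra in general, because it does not force \emph{tightness} (the requirement that \(\Prim(A)\to X\) be a homeomorphism; see \cite{Meyer-Nest:Bootstrap}*{Definition~5.1}). Tensoring with a simple nuclear algebra leaves the primitive ideal space and the structure map unchanged, so a non-tight representative stays non-tight. The existence of a stable Kirchberg \(X\)\nb-algebra in each \(\KK(X)\)-class of~\(\Boot(X)\) is instead part of the range/existence side of Kirchberg's results in~\cite{Kirchberg:Michael} (cf.\ also \cite{Meyer-Nest:Bootstrap}), which you already invoke; just drop the parenthetical and cite that directly.
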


\begin{example}
  If \(X=\bullet\) is the one-point space, then~\(\Z[X]\) is
  simply the ring of integers, which has global dimension~\(1\).
  Hence, in this case, the functor~\(\XK\delta\) reduces to plain
  \(\Z/2\)\nb-graded \(\K\)\nb-theory.
\end{example}

\begin{example}
  \label{exa:compare_class_2}
  Let \(X=\bullet\to\bullet\) be the two-point Sierpi\'nski space.
  Stable Kirchberg \(X\)\nb-algebras in~\(\Boot(X)\) are essentially
  the same as stable extensions of UCT Kirchberg algebras.
  R\o rdam~\cite{Rordam:Classification_extensions} classified these by
  their six-term exact sequences in \(\K\)\nb-theory.  Moreover, every
  six-term exact sequence of countable Abelian groups arises as the
  \(\K\)\nb-theory sequence of a stable Kirchberg \(X\)\nb-algebra
  in~\(\Boot(X)\).  The equivalence between R\o rdam's invariant and
  ours becomes obvious by the following direct computation: given two
  objects \(G_1\xrightarrow{\varphi} G_2\) and \(H_1\xrightarrow{\psi}
  H_2\) in~\(\Mod{\Z[X]}\), we have natural isomorphisms
  \begin{multline*}
    \Ext^2_{\Z[X]}(G_1\xrightarrow{\varphi} G_2,H_1\xrightarrow{\psi} H_2)
    \cong \Ext^2_{\Z[X]}\bigl(\ker(\varphi)\to 0,0\to\coker(\psi)\bigr)
    \\\cong \Ext^1_{\Z[X]}\bigl(0\to \ker(\varphi),0\to\coker(\psi)\bigr)
    \cong \Ext^1_{\Z}\bigl(\ker(\varphi),\coker(\psi)\bigr).
  \end{multline*}
  The group \(\Ext^1_{\Z}\bigl(\ker(\varphi),\coker(\psi)\bigr)\) is
  in natural bijection to the set of equivalence classes of exact
  sequences of the form
  \[
  H_1\xrightarrow{\psi} H_2\to E\to G_1\xrightarrow{\varphi} G_2.
  \]
  In fact, our invariant factors through R\o rdam's; it remembers
  isomorphism classes but forgets certain morphisms.
\end{example}

\begin{example}
  Let~\(X\) be totally ordered (for two points, this is
  Example~\ref{exa:compare_class_2}).  Then filtrated \(\K\)\nb-theory
  is a complete invariant for objects in~\(\Boot(X)\) by the main
  result of~\cite{Meyer-Nest:Filtrated_K}.  Since totally ordered
  spaces are unique path spaces, we now have two seemingly different
  complete invariants for objects in~\(\Boot(X)\).  Both invariants
  must contain exactly the same information.  The authors, however, do
  not understand the relationship between these two invariants.  If,
  for instance, \(X=\bullet\to\bullet\to\bullet\), then the issue is to
  relate elements in \(\Ext^2_{\Z[X]}(G_1\to G_2\to G_3,H_1\to H_2\to
  H_3)\) to diagrams of the form
  \[
  \begin{tikzpicture}[scale=0.75,baseline=(current bounding box.west)]
    \node (0t) at (3,4) {\(0\)};
    \node (01) at (1,0) {\(0\)};
    \node (02) at (3,0) {\(0\)};
    \node (03) at (5,0) {\(0\)};
    \node (H1) at (0,1) {\(H_1\)};
    \node (H2) at (1,2) {\(H_2\)};
    \node (H3) at (2,3) {\(H_3\)};
    \node (G1) at (4,3) {\(G_1\)};
    \node (G2) at (5,2) {\(G_2\)};
    \node (G3) at (6,1) {\(G_3\)};
    \node (K)  at (2,1) {\(K\)};
    \node (L)  at (3,2) {\(L\)};
    \node (M)  at (4,1) {\(M\)};
    \begin{scope}[cdar]
      \draw (H1) -- (H2);
      \draw (H2) -- (H3);
      \draw (H3) -- (0t);
      \draw (H1) -- (01);
      \draw (H2) -- (K);
      \draw (H3) -- (L);
      \draw (0t) -- (G1);
      \draw (01) -- (K);
      \draw (K)  -- (L);
      \draw (L)  -- (G1);
      \draw (K)  -- (02);
      \draw (L)  -- (M);
      \draw (G1) -- (G2);
      \draw (02) -- (M);
      \draw (M)  -- (G2);
      \draw (M)  -- (03);
      \draw (G2) -- (G3);
      \draw (03) -- (G3);
    \end{scope}
  \end{tikzpicture}
  \]
  such that all squares commute and certain exactness conditions hold.
\end{example}

\section{Graph \texorpdfstring{$\Cst$}{C*}-algebras}
\label{sec:graph}

If the finite \(T_0\)\nb-space~\(X\) is not a unique path space, then
we may still classify those objects~\(A\) of~\(\Boot(X)\) for which
\(\XK(A)\) has a projective resolution of length~\(2\).  We are going
to show that graph \(\Cst\)\nb-algebras with finitely many ideals have
this property.  Even better, we may compute their obstruction classes
in terms of the Pimsner--Voiculescu type sequence that computes their
\(\K\)\nb-theory.

\subsection{A computation of obstruction classes}
\label{sec:compute_obstruction_classes}

First we prove a general result in the abstract setting of a
triangulated category~\(\Tri\) with a universal \(\Ideal\)\nb-exact
stable homological functor \(F\colon \Tri\to\Abel\); we also impose
the parity assumptions of Section~\ref{sec:even_case}.  For certain
objects in~\(\Tri\) that are constructed from a length\nb-\(2\)
projective resolution in~\(\Abel\), we compute the obstruction class
explicitly.  Let
\begin{equation}
  \label{eq:cone_MQQM}
  0 \to M_1 \xrightarrow{\partial_2} Q_1
  \xrightarrow{\partial_1} Q_0
  \xrightarrow{\varepsilon} M_0
  \to 0
\end{equation}
be an exact chain complex in~\(\Abel_+\) with projective objects
\(M_1\), \(Q_1\) and~\(Q_0\).  The adjoint functor~\(F^\lad\) on
projective objects of~\(\Abel\) gives objects \(\hat{M}_1\),
\(\hat{Q}_1\) and~\(\hat{Q}_0\) of~\(\Tri\) lifting \(M_1\), \(Q_1\)
and~\(Q_0\), and maps \(\hat{\partial}_2\in\Tri(\hat{M}_1,\hat{Q}_1)\) and
\(\hat{\partial}_1\in\Tri(\hat{Q}_1,\hat{Q}_0)\) lifting \(\partial_2\) and~\(\partial_1\).
Embed~\(\hat{\partial}_1\) into an exact triangle
\[
\hat{Q}_1 \xrightarrow{\hat{\partial}_1}
\hat{Q}_0 \xrightarrow{p}
A \xrightarrow{r}
\hat{Q}_1[1].
\]
The long exact sequence for~\(F\) applied to this triangle has the
form
\[
\dotsb \to Q_1
\xrightarrow{\partial_1} Q_0
\xrightarrow{F(p)} F(A)
\xrightarrow{F(r)} Q_1[1]
\xrightarrow{\partial_1[1]} Q_0[1]
\to \dotsb
\]
Since the cokernel \(M_0\inOb\Abel_+\) of~\(\partial_1\) and the kernel
\(M_1[1]\inOb\Abel_-\) of~\(\partial_1[1]\) have different parity, we get
\[
F(A)\cong M_0\oplus M_1[1].
\]
This has the following projective resolution of length~\(2\):
\begin{equation}
  \label{eq:FA_resolution}
  0 \to M_1
  \xrightarrow{\partial_2} Q_1
  \xrightarrow{\partial_1} Q_0\oplus M_1[1]
  \xrightarrow{(\varepsilon, \id_{M_1[1]})} M_0\oplus M_1[1]
  \to 0.
\end{equation}

\begin{theorem}
  \label{the:obstruction_class_cone}
  The obstruction class of~\(A\) is the class of the \(2\)\nb-step
  extension~\eqref{eq:cone_MQQM} in \(\Ext^2_\Abel(M_0,M_1)\), which
  we embed as a direct summand into
  \begin{multline*}
    \Ext^2_\Abel(F(A),F(A)[-1])
    \cong \Ext^2_\Abel(M_0,M_0[-1]) \oplus \Ext^2_\Abel(M_0,M_1)
    \\\oplus \Ext^2_\Abel(M_1[1],M_0[-1]) \oplus \Ext^2_\Abel(M_1[1],M_1).
  \end{multline*}
  Let
  \begin{equation}
    \label{eq:cone_MQQM_prime}
    0 \to M'_1 \xrightarrow{\partial'_2} Q'_1
    \xrightarrow{\partial'_1} Q'_0
    \xrightarrow{\varepsilon'} M'_0
    \to 0
  \end{equation}
  be another exact chain complex in~\(\Abel\) with even projective
  objects \(M'_1\), \(Q'_1\) and~\(Q'_0\), and let~\(A'\) be the cone
  of the lifting~\(\hat{\partial}'_1\) of~\(\partial_1\).  Then \(A\cong A'\) if and
  only if there is a commutative diagram
  \begin{equation}
    \label{eq:cone_MQQM_compare}
    \begin{tikzpicture}[baseline=(current bounding box.west)]
      \matrix(m)[cd,column sep=5em]{
        M_1&Q_1\oplus Q'_0&Q_0\oplus Q'_0&M_0\\
        M'_1&Q_0\oplus Q'_1&Q_0\oplus Q'_0&M'_0\\
      };
      \begin{scope}[cdar]
        \draw[>->] (m-1-1) -- node {\((\partial_2,0)\)} (m-1-2);
        \draw (m-1-2) -- node {\((\partial_1,\id_{Q'_0})\)} (m-1-3);
        \draw[->>] (m-1-3) -- node {\((\varepsilon,0)\)} (m-1-4);
        \draw[>->] (m-2-1) -- node {\((0,\partial'_2)\)} (m-2-2);
        \draw (m-2-2) -- node {\((\id_{Q_0},\partial'_1)\)} (m-2-3);
        \draw[->>] (m-2-3) -- node {\((0,\varepsilon')\)} (m-2-4);
        \draw (m-1-1) -- node {\(\varphi_1\)} node[swap] {\(\cong\)} (m-2-1);
        \draw (m-1-2) -- node {\(\varphi_2\)} node[swap] {\(\cong\)} (m-2-2);
        \draw (m-1-3) -- node {\(\varphi_3\)} node[swap] {\(\cong\)} (m-2-3);
        \draw (m-1-4) -- node {\(\varphi_4\)} node[swap] {\(\cong\)} (m-2-4);
      \end{scope}
    \end{tikzpicture}
  \end{equation}
  in~\(\Abel\), where the maps \(\varphi_i\) for \(i=1,2,3,4\) are
  isomorphisms.
\end{theorem}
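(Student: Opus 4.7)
My strategy is to first establish the formula for $\delta(A)$ via the construction from the proof of Theorem~\ref{the:classify_liftings_two-dim}, and then to deduce the isomorphism criterion from Theorem~\ref{thm:Ext_enriched_classification}.

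A preliminary observation reduces where $\delta(A)$ can live: in the four-summand decomposition of $\Ext^2_\Abel(F(A),F(A)[-1])$, the two components $\Ext^2(M_0,M_0[-1])$ and $\Ext^2(M_1[1],M_1)$ vanish by orthogonality of $\Abel_\pm$, while $\Ext^2(M_1[1],M_0[-1])$ vanishes because $M_1[1]$ is projective as a shift of the projective $M_1$. Hence $\delta(A)$ automatically lies in the $\Ext^2_\Abel(M_0,M_1)$-summand. To identify this class, I apply the construction of $\delta$ to the length-two resolution~\eqref{eq:FA_resolution}. Since $M_1[1]$ is projective, $\Omega F(A)\cong\Omega M_0=\partial_1(Q_1)$ is purely even, with unique lifting $D=\textup{cone}(\hat\partial_2)$. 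The octahedral axiom applied to the composition $\hat M_1\xrightarrow{\hat\partial_2}\hat Q_1\xrightarrow{\hat\partial_1}\hat Q_0$ produces an object $C=\textup{cone}(\hat\partial_1\hat\partial_2)$ together with triangles $\hat M_1\to\hat Q_0\to C$ and $D\to C\to A$. The first exhibits $C$ as a lifting of $Q_0\oplus M_1[1]$ (because $\partial_1\partial_2=0$), whence $C\cong\hat Q_0\oplus\hat M_1[1]$ by Proposition~\ref{pro:lift_one-dimensional}; the second exhibits $A$ as the cone of some $\varphi\colon D\to\hat Q_0\oplus\hat M_1[1]$ lifting the inclusion $\Omega M_0\hookrightarrow Q_0\oplus M_1[1]$. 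Comparing $\varphi$ with the analogous map for the canonical lifting $\hat M_0\oplus\hat M_1[1]$, the discrepancy corresponds under the Universal Coefficient Theorem to the class of $\hat\partial_1\hat\partial_2$ in $\Ext^1(\Omega M_0,M_1)$, which by Yoneda product with $\Omega M_0\into Q_0\prto M_0$ yields the class of the 2-step extension~\eqref{eq:cone_MQQM} in $\Ext^2_\Abel(M_0,M_1)$. The main obstacle here is this diagram chase combining the octahedral identification of $C$ with the UCT description of $\hat\partial_1\hat\partial_2$; it is essentially a larger version of the computation in the proof of the preceding lemma relating $d_2^{0,0}(\id_A)$ to $\delta$.

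For the isomorphism criterion, Theorem~\ref{thm:Ext_enriched_classification} says $A\cong A'$ iff $(F(A),\delta(A))$ and $(F(A'),\delta(A'))$ are isomorphic in $\Abel\delta$. By the parity splitting, any such isomorphism decomposes as a pair of isomorphisms $\varphi_4\colon M_0\to M'_0$ and $\varphi_1\colon M_1\to M'_1$; by Part~1, the $\delta$-compatibility condition becomes $(\varphi_1)_*(\partial_2\cdot\partial_1)=(\varphi_4)^*(\partial'_2\cdot\partial'_1)$ in $\Ext^2_\Abel(M_0,M'_1)$. The two rows of~\eqref{eq:cone_MQQM_compare} are obtained from~\eqref{eq:cone_MQQM} and~\eqref{eq:cone_MQQM_prime} by direct sum with the split sequences $Q'_0\xrightarrow{\id}Q'_0$ and $Q_0\xrightarrow{\id}Q_0$, respectively, so they represent the same Yoneda classes as the originals; hence the existence of the commutative diagram~\eqref{eq:cone_MQQM_compare} with isomorphisms $\varphi_i$ is equivalent to the equality of these Yoneda classes. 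The ``$\Leftarrow$'' direction reads off $\varphi_1,\varphi_4$ from the diagram. For ``$\Rightarrow$'', starting from compatible $\varphi_1,\varphi_4$, the middle isomorphisms $\varphi_2,\varphi_3$ are produced by the comparison theorem for projective resolutions applied to the stabilized rows, which share the same middle object $Q_0\oplus Q'_0$ by construction.
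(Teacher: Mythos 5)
Your overall strategy matches the paper's: apply the octahedral axiom to $\hat\partial_1\circ\hat\partial_2$ to present $A$ as the cone of a map $D\to\hat Q_0\oplus\hat M_1[1]$, compare with the canonical lifting, and transfer the discrepancy through the UCT and a splice to the Yoneda class of~\eqref{eq:cone_MQQM}. Your preliminary reduction (parity orthogonality kills two summands, projectivity of $M_1[1]$ the third) is a nice explicit justification of why $\delta(A)$ sits in the $\Ext^2_\Abel(M_0,M_1)$-summand. But there is a misstatement in the identification step: the composite $\hat\partial_1\hat\partial_2$ vanishes in $\Tri$ (as both sources are liftings of projectives of the same parity, so $\Tri(\hat M_1,\hat Q_0)\cong\Hom(M_1,Q_0)$, and $\partial_1\partial_2=0$). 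What the discrepancy actually corresponds to under the UCT is the class of the connecting map $v\colon D\to\hat M_1[1]$ of the triangle on $\hat\partial_2$, i.e.\ the Yoneda class of $M_1\rightarrowtail Q_1\twoheadrightarrow\Omega M_0$ in $\Ext^1(\Omega M_0,M_1)$; splicing this with $\Omega M_0\rightarrowtail Q_0\twoheadrightarrow M_0$ is then correct.

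The real gap is in the ``$\Rightarrow$'' direction of the isomorphism criterion. The comparison theorem for projective resolutions produces a chain map over $(\varphi_1,\varphi_4)$, not a chain \emph{isomorphism}; it does not use the equality of $\Ext^2$-classes and so cannot detect when the middle maps can be taken invertible. The paper instead constructs $\varphi_3$ explicitly as a product of elementary matrices using lifts $\psi\colon Q_0\to Q'_0$ and $\psi'\colon Q'_0\to Q_0$ of $\id_{M_0}$, so $\varphi_3$ is \emph{guaranteed} invertible and compatible with the surjections; then it identifies $\Ext^2(M_0,M_1)\cong\Ext^1(K,M_1)$ where $K=\ker(\varepsilon,0)$, and uses that equal classes in $\Ext^1(K,M_1)$ mean the extensions are isomorphic, which yields the isomorphism $\varphi_2$ commuting with both outer maps. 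These steps are the content you skipped. (Also note that the paper's ``$\Leftarrow$'' direction is more elementary: it lifts the middle square to $\Tri$ and uses additivity of cones, requiring neither the obstruction class computation nor Theorem~\ref{thm:Ext_enriched_classification}, which as stated assumes global dimension~$2$ and is therefore a mild overreach here.)
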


\begin{proof}
  We first compute the obstruction class of~\(A\).  For this, we
  compare~\(A\) to the canonical lifting of~\(F(A)\).  The latter is
  the direct sum of the canonical lifting of~\(M_0\)
  with~\(\hat{M}_1[1]\).  To lift~\(M_0\) canonically, we first embed
  \(\hat{\partial}_2\colon \hat{M}_1\to \hat{Q}_1\) in an exact triangle
  \[
  \hat{M}_1
  \xrightarrow{\hat{\partial}_2} \hat{Q}_1
  \xrightarrow{u} D
  \xrightarrow{v} \hat{M}_1[1].
  \]
  Then \(F(D)\cong \coker \partial_2 \cong \ker \varepsilon\).  The UCT gives
  \(\Tri_0(D,\hat{Q}_0) \cong \Abel(\ker \varepsilon,Q_0)\) for parity
  reasons.  Hence there is a unique \(x\in \Tri_0(D,\hat{Q}_0)\) for which
  \(F(x)\) is the inclusion of \(\ker \varepsilon\) into~\(Q_0\).  The cone
  of~\(x\) is the canonical lifting of~\(M_0\).  Since direct sums of exact
  triangles remain exact, the canonical lifting of~\(F(A)\) is the cone of
  the map \((x,0)\colon D \to \hat{Q}_0\oplus \hat{M}_1[1]\).

  The map \(\hat{\partial}_2\circ \hat{\partial}_1=0\) is part of an exact triangle
  \[
  \hat{M}_1
  \xrightarrow{0} \hat{Q}_0
  \xrightarrow{i_1} \hat{Q}_0\oplus \hat{M}_1[1]
  \xrightarrow{p_2} \hat{M}_1[1],
  \]
  where~\(i_1\) is the inclusion of the first summand and~\(p_2\) the
  projection onto the second summand.  The octahedral axiom applied to
  \(\hat{\partial}_1\) and~\(\hat{\partial}_2\) gives maps \(\bar{x}\colon D\to
  \hat{Q}_0\), \(y\colon D\to \hat{M}_1[1]\) and \(t\colon
  \hat{M}_1[1]\to A\) such that the diagram
  \[
  \begin{tikzpicture}
    \matrix(m)[cd,row sep=6ex]{
      \hat{M}_1&\hat{Q}_1&D&\hat{M}_1[1]\\
      \hat{M}_1&\hat{Q}_0&\hat{Q}_0\oplus \hat{M}_1[1]&\hat{M}_1[1]\\
      0&A&A&0\\
      \hat{M}_1[1]&\hat{Q}_1[1]&D[1]&\hat{M}_1[2]\\
    };
    \begin{scope}[cdar]
      \draw (m-1-1) -- node {\(\hat{\partial}_2\)} (m-1-2);
      \draw (m-1-2) -- node {\(u\)} (m-1-3);
      \draw (m-1-3) -- node {\(v\)} (m-1-4);
      \draw (m-2-1) -- node {\(0\)} (m-2-2);
      \draw (m-2-2) -- node {\(i_1\)} (m-2-3);
      \draw (m-2-3) -- node {\(p_2\)} (m-2-4);
      \draw (m-3-1) -- (m-3-2);
      \draw (m-3-3) -- (m-3-4);
      \draw (m-4-1) -- node {\(\hat{\partial}_2[1]\)} (m-4-2);
      \draw (m-4-2) -- node {\(u[1]\)} (m-4-3);
      \draw (m-4-3) -- node {\(v[1]\)} (m-4-4);
      \draw (m-1-2) -- node {\(\hat{\partial}_1\)} (m-2-2);
      \draw (m-2-1) -- (m-3-1);
      \draw (m-2-2) -- node {\(p\)} (m-3-2);
      \draw (m-2-4) -- (m-3-4);
      \draw (m-3-1) -- (m-4-1);
      \draw (m-3-2) -- node {\(r\)} (m-4-2);
      \draw (m-3-3) -- node {\(u[1]\circ r\)} (m-4-3);
      \draw (m-3-4) -- (m-4-4);
    \end{scope}
    \draw[->,dotted] (m-1-3) -- node {\(
      \left(\begin{smallmatrix} \bar{x}\\v \end{smallmatrix}\right)
      \)} (m-2-3);
    \draw[->,dotted] (m-2-3) -- node {\(
      \left(\begin{smallmatrix} p\\t \end{smallmatrix}\right)
      \)} (m-3-3);
    \draw[double, double equal sign distance] (m-3-2) -- (m-3-3);
    \draw[double, double equal sign distance] (m-1-1) -- (m-2-1);
    \draw[double, double equal sign distance] (m-1-4) -- (m-2-4);
  \end{tikzpicture}
  \]
  commutes and has exact rows and columns.
  
  We claim that \(\bar{x}=x\).  Recall that~\(F(u)\) is
  surjective, so \(F(\bar{x})\) is determined by its composite
  with~\(F(u)\), which is equal to \(F(\hat{\partial}_1)=\partial_1\)
  by the commuting diagram.  Hence \(F(\bar{x})=F(x)\), which gives
  \(\bar{x}=x\) by the uniqueness of~\(x\).  The exactness of the third
  column means that~\(A\) is the cone of the map \((x,v)\colon D
  \to\hat{Q}_0\oplus \hat{M}_1[1]\).  The canonical lifting of~\(F(A)\)
  is the cone of the map \((x,0)\colon D \to \hat{Q}_0\oplus
  \hat{M}_1[1]\).  Hence the obstruction class of~\(A\) is the image
  of \((x,v)-(x,0)=(0,v)\) under the map from \(\Ext^1_\Abel\bigl(F(D),
  Q_0[-1]\oplus M_1\bigr) \subseteq \Tri(D, \hat{Q}_0\oplus
  \hat{M}_1[1])\) to \(\Ext^2_\Abel\bigl(F(A),F(A)[-1]\bigr)\)
  constructed in the proof of Theorem~\ref{the:classify_liftings_two-dim}.  

  We may describe the element in
  \(\Ext^1_\Abel\bigl(F(D),Q_0[-1]\oplus M_1\bigr)\) induced
  by~\((0,v)\) because~\(v\) also appears in the first row: it is
  represented by the extension
  \[
  0\to Q_0[-1]\oplus M_1\xrightarrow{(\id_{Q_0[-1]},\partial_2)}
  Q_0[-1]\oplus Q_1\xrightarrow{(0,u)} F(D)\to 0.
  \]
  The next step is to push forward along the map
  \[
  (\varepsilon[-1], \id_{M_1}) \colon
  Q_0[-1]\oplus M_1\to M_0[-1]\oplus M_1\cong F(A)[-1].
  \]
  The resulting element in \(\Ext^1_\Abel\bigl(F(D),F(A)[-1]\bigr)\) is
  represented by the extension
  \[
   0\to M_0[-1]\oplus M_1\xrightarrow{(\id_{M_0[-1]},\partial_2)}
   M_0[-1]\oplus Q_1\xrightarrow{(0,u)} F(D)\to 0.
  \]

  To get the obstruction class for~\(A\) in
  \(\Ext^2_\Abel\bigl(F(A),F(A)[-1]\bigr)\), we need to splice the
  extension above with the extension
  \[
  0\to F(D)\xrightarrow{(\iota,0)} Q_0\oplus M_1[1]
  \xrightarrow{(\varepsilon,\id_{M_1[1]})} M_0\oplus M_1[1]\to 0,
  \]
  where \(\iota\colon F(D)\to Q_0\) denotes the inclusion map
  \(F(D)\cong\ker\varepsilon\subseteq Q_0\).
  Up to the identification stated in the theorem, this yields indeed the
  class of the \(2\)\nb-step extension~\eqref{eq:cone_MQQM}.

  Now we establish the isomorphism criterion.  First assume that
  there are invertible maps~\(\varphi_i\) as
  in~\eqref{eq:cone_MQQM_compare}.  Since the cone of the identity
  map is the zero object and since cones are additive for direct
  sums, the cone of \(\hat{\partial}_1\oplus \id_{\hat{Q}'_0}\) is
  again~\(A\), and the cone of \(\hat{\partial}'_1\oplus
  \id_{\hat{Q}_0}\) is again~\(A'\).  Since the maps
  \(\hat{\partial}_1\oplus \id_{\hat{Q}'_0}\) and
  \(\hat{\partial}'_1\oplus \id_{\hat{Q}_0}\) are isomorphic
  by~\eqref{eq:cone_MQQM_compare}, they have isomorphic cones.  Thus
  \(A\cong A'\).

  Conversely, assume that \(A\cong A'\).  Then \(F(A)\cong F(A')\),
  so that we get isomorphisms \(\varphi_1\colon M_1\to M_1'\) and
  \(\varphi_4\colon M_0\to M_0'\).  To simplify notation, we assume
  without loss of generality that \(\varphi_1\) and~\(\varphi_4\)
  are identity maps.  Then the isomorphism \(A\cong A'\) is an
  equivalence of liftings, so that \(A\) and~\(A'\) have the same
  obstruction class in \(\Ext^2_\Abel\bigl(F(A),F(A)[-1]\bigr)\).
  By our computation of the obstruction class, this means that the
  \(2\)\nb-step extensions \eqref{eq:cone_MQQM}
  and~\eqref{eq:cone_MQQM_prime} (with \(M_i'=M_i\)) have the same
  class in \(\Ext^2_\Abel(M_0,M_1)\).  The classes in
  \(\Ext^2_\Abel(M_0,M_1)\) are not changed by adding
  \(\id_{Q_0'}\colon Q_0'\to Q_0'\) in~\eqref{eq:cone_MQQM} and
  \(\id_{Q_0}\colon Q_0\to Q_0\) in~\eqref{eq:cone_MQQM_prime}.
  Thus the two rows in~\eqref{eq:cone_MQQM_compare} have the same
  class in \(\Ext^2_\Abel(M_0,M_1)\).

  Since \(Q_0\) and~\(Q_0'\) are projective, there are maps
  \(\psi\colon Q_0\to Q_0'\) and \(\psi'\colon Q_0'\to Q_0\) with
  \(\varepsilon'\circ \psi=\varepsilon\colon Q_0\to M\) and
  \(\varepsilon\circ \psi'=\varepsilon'\colon Q'_0\to M\).  Then
  \[
  \varphi_3\defeq
  \begin{pmatrix}
    \id_{Q_0}&0\\\psi&\id_{Q'_0}
  \end{pmatrix}\circ
  \begin{pmatrix}
    \id_{Q_0}&-\psi'\\0&\id_{Q'_0}
  \end{pmatrix}\colon
  Q_0\oplus Q_0'\to Q_0\oplus Q_0'
  \]
  is an isomorphism with \((0,\varepsilon')\circ\varphi_3 =
  (\varepsilon,0)\).  Hence~\(\varphi_3\) makes the third square
  in~\eqref{eq:cone_MQQM_compare} commute.

  Let~\(K\) be the kernel of \((\varepsilon,0)\colon Q_0\oplus Q'_0\to
  M_0\).  This is isomorphic to the kernel of \((0,\varepsilon')\)
  via~\(\varphi_3\).  Since \(Q_0\oplus Q_0'\) is projective,
  composition with the class of the extension \(K\into Q_0\oplus Q_0'
  \prto M_0\) gives an isomorphism \(\Ext^2_\Abel(M_0,M_1) \cong
  \Ext^1_\Abel(K,M_1)\).  Hence the equality of the obstruction
  classes shows that the extensions \(M_1\into Q_1\oplus Q_0'\prto K\)
  and \(M_1\into Q_0\oplus Q_1'\prto K\) that we get from the two rows
  in~\eqref{eq:cone_MQQM_compare} and the isomorphism~\(\varphi_3\)
  have the same class in \(\Ext^1_\Abel(K,M_1)\).  Equality in
  \(\Ext^1_\Abel(K,M_1)\) means that the extensions really are
  isomorphic in the strongest possible sense, that is, there is an
  isomorphism \(\varphi_2\colon Q_1\oplus Q_0' \to Q_0\oplus Q'_1\)
  that induces an isomorphism of extensions.  This means that it makes
  the remaining two squares in~\eqref{eq:cone_MQQM_compare} commute.
  Thus \(A\cong A'\) implies that there are isomorphisms~\(\varphi_i\)
  making~\eqref{eq:cone_MQQM_compare} commute.
\end{proof}

\begin{remark}
  \label{rem:MQQM_drop_parity}
  The same argument works if the objects in~\eqref{eq:cone_MQQM} all
  belong to~\(\Abel_-\).  If the objects in~\eqref{eq:cone_MQQM}
  belong to~\(\Abel\), then we may split~\eqref{eq:cone_MQQM} into its
  even and odd parts.  Thus the obvious adaption of
  Theorem~\ref{the:obstruction_class_cone} still holds without any
  parity assumptions on the objects \(M_j\) and~\(Q_j\).
\end{remark}

\begin{remark}
  \label{rem:MQQM_other_criterion}  
  There are several variants of the
  criterion~\eqref{eq:cone_MQQM_compare}.  Since \eqref{eq:cone_MQQM}
  and~\eqref{eq:cone_MQQM_prime} are exact, isomorphisms \(\varphi_2\)
  and~\(\varphi_3\) making the middle square
  in~\eqref{eq:cone_MQQM_compare} commute give \(\varphi_1\)
  and~\(\varphi_4\) making all squares in~\eqref{eq:cone_MQQM_compare}
  commute.  Furthermore, if~\(\varphi_i\) are isomorphisms for \(i=1,4\)
  and for \(i=2\) or \(i=3\), then the remaining one is an isomorphism
  as well by the Five Lemma.

  If there are maps~\(\varphi_i\) making~\eqref{eq:cone_MQQM_compare}
  commute, and such that \(\varphi_1\) and~\(\varphi_4\) are
  invertible, then it already follows that \(A\cong A'\).  This is
  because \(\varphi_1\) and~\(\varphi_4\) induce an isomorphism
  \(F(A)\cong F(A')\), and~\eqref{eq:cone_MQQM_compare} shows that the
  obstruction classes also agree, no matter whether \(\varphi_2\)
  or~\(\varphi_3\) are invertible.
\end{remark}

\subsection{Crossed products for \Cstar{}algebras over topological spaces}
\label{sec:crossed_products}

In this subsection, we generalise some basic results about crossed
products to \(\Cst\)\nb-algebras over topological spaces.  Let~\(G\)
be a locally compact group.  Let~\(X\) be a second countable
topological space.

\begin{definition}
  A \emph{\(G\)\nb-\(\Cst\)\nb-algebra over~\(X\)} is a
  \(\Cst\)\nb-algebra over~\(X\) whose underlying
  \(\Cst\)\nb-algebra is a \(G\)\nb-\(\Cst\)\nb-algebra such that
  all distinguished ideals are \(G\)\nb-invariant.
\end{definition}

If \((A,\alpha)\) is a \(G\)\nb-\(\Cst\)\nb-algebra over~\(X\) then
the crossed product \(A\rtimes_\alpha G\) is a \(\Cst\)\nb-algebra
over~\(X\) via \((A\rtimes_\alpha G)(U)\defeq
A(U)\rtimes_{\alpha|_{A(U)}} G\) for all \(U\in\mathbb{O}(X)\).
If~\(G\) is Abelian, then \((A\rtimes_\alpha G,\hat\alpha)\) is a
\(\hat G\)\nb-\(\Cst\)\nb-algebra over~\(X\).

\begin{proposition}[Takai Duality]
  Let~\(G\) be Abelian.  Let \((A,\alpha)\) be a
  \(G\)\nb-\(\Cst\)\nb-algebra over~\(X\).  Then there is a natural
  isomorphism of \(G\)\nb-\(\Cst\)\nb-algebras over~\(X\) from
  \(\bigl((A\rtimes_\alpha G)\rtimes_{\hat\alpha} \hat
  G,\hat{\hat\alpha}\bigr)\) to \(\bigl(A\otimes\mathbb K(L^2 G),
  \alpha\otimes\mathrm{ad}_\lambda\bigr)\).  In particular, there is
  a natural \(\KK(X)\)-equivalence between \((A\rtimes_\alpha G)
  \rtimes_{\hat\alpha} \hat G\) and~\(A\).
\end{proposition}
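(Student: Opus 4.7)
The plan is to deduce this version of Takai duality directly from the classical statement by checking that the classical Takai isomorphism respects the $X$-structure on both sides, using only the naturality of crossed products with respect to $G$-equivariant ideal inclusions.

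First I would recall that the classical Takai duality theorem provides, for each $G$-$\Cst$-algebra $(B,\beta)$, a natural $\Star$isomorphism
\[
\Phi_B\colon (B\rtimes_\beta G)\rtimes_{\hat\beta}\hat G \xrightarrow{\cong} B\otimes \Comp(L^2 G)
\]
which intertwines the double-dual action $\hat{\hat\beta}$ with $\beta\otimes \mathrm{ad}_\lambda$. Crucially, $\Phi$ is natural with respect to $G$\nb-equivariant $\Star$homomorphisms; in particular, if $J\subseteq B$ is a $G$\nb-invariant ideal, then $\Phi_B$ restricts to $\Phi_J$ on the $G$\nb-invariant ideal $(J\rtimes G)\rtimes \hat G\subseteq (B\rtimes G)\rtimes \hat G$, whose image is $J\otimes\Comp(L^2G)$.

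Now apply this to $B=A$ for a $G$\nb-$\Cst$\nb-algebra $(A,\alpha)$ over~$X$. By assumption, every distinguished ideal $A(U)$ for $U\in\mathbb{O}(X)$ is $G$\nb-invariant. By the definitions in Section~\ref{sec:crossed_products},
\[
\bigl((A\rtimes_\alpha G)\rtimes_{\hat\alpha}\hat G\bigr)(U)
=\bigl(A(U)\rtimes_\alpha G\bigr)\rtimes_{\hat\alpha}\hat G,
\qquad
\bigl(A\otimes \Comp(L^2G)\bigr)(U) = A(U)\otimes \Comp(L^2G).
\]
Naturality of $\Phi$ sends the first of these onto the second, so $\Phi_A$ is a $\Star$isomorphism of $\Cst$\nb-algebras over~$X$. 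It is $\hat{\hat G}\cong G$\nb-equivariant by the classical statement, and the action it transports to $A\otimes\Comp(L^2G)$ is $\alpha\otimes\mathrm{ad}_\lambda$; hence this $\Star$isomorphism is an isomorphism of $G$\nb-$\Cst$\nb-algebras over~$X$ as required.

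For the final assertion, it remains to exhibit a natural $\KK(X)$\nb-equivalence between $A$ and $A\otimes\Comp(L^2 G)$. The stabilisation $A\to A\otimes\Comp(L^2G)$, $a\mapsto a\otimes e$ for a rank-one projection~$e\in\Comp(L^2G)$, is a $\Star$homomorphism over~$X$ (it maps $A(U)$ into $A(U)\otimes\Comp(L^2G)$), and it is a Morita--Rieffel equivalence compatible with the $X$\nb-structure; hence it induces an invertible class in $\KK_0(X;A,A\otimes\Comp(L^2G))$. Composing with the inverse of $\Phi_A$ in $\KK(X)$ gives the desired $\KK(X)$\nb-equivalence. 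The only step that requires any care is the identification of distinguished ideals on both sides, which amounts to unwinding the definitions from Section~\ref{sec:crossed_products}; everything else is a transcription of the classical proof with $G$\nb-invariant ideals tracked throughout.
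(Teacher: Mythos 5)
Your proposal is correct and takes essentially the same route as the paper: the paper's proof is the one-line remark that the claim ``follows immediately from the naturality of the classical version'' (citing Connes), and you have simply unwound what that naturality says for $G$-invariant ideals and for the stabilisation map. The added detail about how $\Phi_B$ restricts on distinguished ideals and why the corner embedding is a $\KK(X)$-equivalence is exactly the content the paper leaves implicit.
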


\begin{proof}
  We have only added \(X\)-equivariance to the classical statement.
  This follows immediately from the naturality of the classical
  version (see \cite{Connes:NCG}*{Theorem~6 in Appendix~C of
    Chapter~2}).
\end{proof}

In the following, we assume for convenience that \(X\) is finite and
\(A\) belongs to the category \(\KKcat(X)_\loc\) defined in
\cite{Meyer-Nest:Bootstrap}*{Definition 4.8}.  It should be possible
to remove these assumptions by carefully checking the naturality of the
homotopies effecting the respective equivalences.

\begin{proposition}[Green Imprimitivity]
  Let \(H\) be a closed subgroup of~\(G\) and let \((A,\alpha)\) be
  an \(H\)\nb-\(\Cst\)\nb-algebra over~\(X\).  Assume that \(X\) is
  finite and \(A\inOb\KKcat(X)_\loc\).  There is a natural
  \(\KK(X)\)-equivalence between \(A\rtimes_\alpha H\)
  and~\(\Ind_H^G(A,\alpha)\rtimes_{\Ind\alpha}G\).
\end{proposition}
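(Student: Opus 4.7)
The plan is to upgrade Green's classical imprimitivity theorem to the $X$\nb-equivariant setting, in direct analogy with the Takai duality argument above. Green's theorem produces a natural $A\rtimes_\alpha H$-$\Ind_H^G(A,\alpha)\rtimes_{\Ind\alpha}G$-imprimitivity bimodule $\EG(G,H,A)$ obtained as a completion of $C_c(G,A)$; at the non-equivariant level this already yields a $\KK$\nb-equivalence between the two crossed products. The task is therefore to check that this classical equivalence is natural with respect to the $X$\nb-structures on both sides.

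First I would check that Green's construction is functorial in $H$\nb-equivariant \Star{}homomorphisms of coefficient algebras, and in particular in inclusions of $H$\nb-invariant ideals. For each open $U\in\mathbb{O}(X)$, the distinguished ideal $A(U)\subseteq A$ is $H$\nb-invariant by definition of a $G$\nb-\(\Cst\)\nb-algebra over $X$; induction then gives a $G$\nb-invariant ideal $\Ind_H^G A(U)\subseteq \Ind_H^G A$, and the sub-bimodule $\EG(G,H,A(U))\subseteq \EG(G,H,A)$ realises a Morita equivalence between the corresponding ideals in the two crossed products. Matching these inclusions with the $X$\nb-structures on $A\rtimes_\alpha H$ and $\Ind_H^G(A,\alpha)\rtimes_{\Ind\alpha}G$ exhibits $\EG(G,H,A)$ as a strong Morita equivalence in the category of \(\Cst\)\nb-algebras over $X$.

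Second, I would promote this $X$\nb-equivariant Morita equivalence to an invertible element of $\KK_0\bigl(X;A\rtimes_\alpha H,\Ind_H^G(A,\alpha)\rtimes_{\Ind\alpha}G\bigr)$. Since the Morita equivalence yields an invertible Kasparov cycle of the form $(\EG(G,H,A),0)$ in the non-equivariant $\KK$, and since the underlying bimodule respects the $X$\nb-structures on both sides by the previous step, the same cycle defines an invertible class in $\KK(X)$. Naturality in $A$ is inherited from the functoriality of each step of the construction.

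The main obstacle is the bookkeeping required to identify the distinguished ideals of $\Ind_H^G A$ with those of $A$ and to check that the continuity conditions built into $\KKcat(X)_\loc$ are preserved under induction and crossed products. The finiteness of $X$ and the assumption $A\inOb\KKcat(X)_\loc$ are invoked precisely to make this matching automatic, reducing the verification to a check of $X$\nb-equivariance of the classical construction on each distinguished ideal; without these assumptions one would have to argue more carefully that the homotopies implementing the classical Morita equivalence extend naturally over the open subsets of~$X$.
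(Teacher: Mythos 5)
Your proposal and the paper's proof diverge in one essential place. You propose to establish directly that Green's imprimitivity bimodule is a Morita equivalence \emph{over~$X$} --- that is, that for every open $U\subseteq X$ the restricted bimodule implements a Morita equivalence between the ideals $A(U)\rtimes_\alpha H$ and $\bigl(\Ind_H^G A(U)\bigr)\rtimes_{\Ind\alpha}G$ --- and then to feed this into a $\KK(X)$-version of Morita invariance. The paper never tries to prove this strong $X$\nb-equivariant Morita equivalence. It only extracts from the naturality of the EKQR imprimitivity bimodule a morphism in $\KK(X)$, observes that this morphism is a pointwise $\KK$\nb-equivalence (i.e.\ restricts to a $\KK$\nb-equivalence on every distinguished subquotient, which is exactly the classical Green theorem), and then invokes \cite{Meyer-Nest:Bootstrap}*{Proposition 4.9}: a $\KK(X)$\nb-morphism between objects of $\KKcat(X)_\loc$ that is a pointwise $\KK$\nb-equivalence is a $\KK(X)$\nb-equivalence. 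This is a much softer argument, and it sidesteps the ideal-by-ideal verification you anticipate as the ``main obstacle.''

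This also means your last paragraph misreads the role of the hypotheses. The finiteness of $X$ and the assumption $A\inOb\KKcat(X)_\loc$ are not there to make ``the matching of distinguished ideals automatic'' or to let the homotopies extend over opens: if your first two steps could be carried out in full, they would already deliver the conclusion and these hypotheses would be essentially unused. Rather, the hypotheses are used in the paper only to license the pointwise-to-global step via Proposition 4.9, and they are needed precisely because that proposition (an instance of the $\KK(X)$ version of ``a triangulated functor detecting isomorphisms on a generating class'') requires $A$ to lie in the localising subcategory $\KKcat(X)_\loc$ and uses finiteness of $X$ for its proof. Your approach is not wrong, but it sets out to prove a strictly stronger statement (Morita equivalence over $X$) than the paper actually needs, and it would require verifying claims (e.g.\ that $\EG(G,H,A(U))\subseteq\EG(G,H,A)$ is a sub-imprimitivity-bimodule implementing an equivalence of the ideals) that the paper deliberately avoids.
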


\begin{proof}
  By naturality, the imprimitivity bimodule constructed in
  \cite{Echterhoff-Kaliszewski-Quigg-Raeburn:Naturality}*{Theorem
    4.1} induces a \(\KK(X)\)-element which is a pointwise
  \(\KK\)-equivalence.  By \cite{Meyer-Nest:Bootstrap}*{Proposition
    4.9}, it is a \(\KK(X)\)-equivalence.
\end{proof}

\begin{proposition}[Connes--Thom Isomorphism]
  Let \((A,\alpha)\) be an \(\R\)\nb-\(\Cst\)\nb-algebra over~\(X\).
  Assume that \(X\) is finite and \(A\inOb\KKcat(X)_\loc\).  Then
  \(A\rtimes_\alpha \R\) is naturally \(\KK(X)\)-equivalent to
  \(A[-1]\).
\end{proposition}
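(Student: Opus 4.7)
The plan is to follow the template of the previous two propositions: construct a natural $\KK(X)$\nb-element whose pointwise restrictions recover the classical Connes--Thom $\KK$\nb-equivalences, and then invoke \cite{Meyer-Nest:Bootstrap}*{Proposition 4.9} to promote pointwise invertibility to $\KK(X)$\nb-invertibility.

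First, I would recall a functorial model of the classical Connes--Thom isomorphism: for any $\R$\nb-$\Cst$\nb-algebra $(B,\beta)$ there is a canonical invertible Thom class $t_\beta\in\KK_1(B\rtimes_\beta\R,B)$, and via the Fack--Skandalis construction (or, equivalently, the Kasparov Thom element picture) this class is natural with respect to $\R$\nb-equivariant $*$\nb-homomorphisms: every such $\varphi\colon B_1\to B_2$ satisfies $[\varphi]\circ t_{\beta_1}=t_{\beta_2}\circ[\varphi\rtimes\R]$ in $\KK_1$.

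Second, I would lift $t_\alpha$ to the $X$\nb-equivariant setting. Since every distinguished ideal $A(U)$ of $(A,\alpha)$ is $\R$\nb-invariant, the naturality above applied to the inclusions $A(U)\into A(V)$ for $U\subseteq V$ in $\mathbb{O}(X)$ produces a compatible family of classes that assembles into a single element $t_\alpha^X\in\KK_1(X;A\rtimes_\alpha\R,A)$ whose pointwise restrictions recover the classical Connes--Thom elements $t_{\alpha|_{A(U)}}$. Equivalently, one realises $t_\alpha^X$ as a Kasparov product with an $X$\nb-equivariant Thom class, using that $(A\rtimes_\alpha\R)(U)=A(U)\rtimes_\alpha\R$, so that the crossed product inherits a canonical $\Cst$\nb-algebra structure over~$X$ and belongs to $\KKcat(X)_\loc$.

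Third, by the classical Connes--Thom theorem each $t_{\alpha|_{A(U)}}$ is invertible in $\KK$, so $t_\alpha^X$ is a pointwise $\KK$\nb-equivalence between two objects of $\KKcat(X)_\loc$. Invoking \cite{Meyer-Nest:Bootstrap}*{Proposition 4.9} then yields that $t_\alpha^X$ is a $\KK(X)$\nb-equivalence, and the identification $\KK_1(X;\blank,\blank)=\KK(X;\blank,\blank[-1])$ delivers the asserted natural $\KK(X)$\nb-equivalence $A\rtimes_\alpha\R\simeq A[-1]$.

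The main obstacle is the second step: one must verify that the Fack--Skandalis--Kasparov construction of the Thom class genuinely produces a $\KK(X)$\nb-element, not merely a coherent family of pointwise $\KK$\nb-classes. As in the Green imprimitivity case, this reduces to a naturality verification -- the Thom class should be realisable from $\R$\nb-invariant Kasparov data on $A$ that is automatically compatible with the distinguished-ideal filtration -- but it is the technical heart of the argument and the only place where the internal structure of $\KK(X)$ from \cite{Meyer-Nest:Bootstrap} enters non-trivially.
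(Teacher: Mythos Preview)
Your outline is reasonable and would work, but the paper takes a different route that sidesteps precisely the obstacle you flag in your last paragraph. Rather than trying to promote the Fack--Skandalis Thom cycle directly to a \(\KK(X)\)-element, the paper (following K\"ohler's thesis) uses a Fell-absorption trick: write~\(\tilde A\) for~\(A\) with the trivial \(\R\)\nb-action and observe that the diagonal \(\R\)\nb-actions on \(\CONT_0(\R,A)\) and \(\CONT_0(\R,\tilde A)\) are naturally \Star{}isomorphic. Combined with a fixed \(\KK^\R\)-equivalence \(\C[-1]\simeq\CONT_0(\R)\) from \cite{Kasparov:Novikov}*{Theorem 5.9}, this yields a natural \(\KK^\R\)-equivalence \(A\simeq\tilde A\); passing to crossed products gives a natural \(\KK\)-equivalence \(A\rtimes_\alpha\R\simeq\tilde A\rtimes\R\simeq A[-1]\). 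Only at this final stage does one invoke \cite{Meyer-Nest:Bootstrap}*{Proposition 4.9}.

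The advantage of the paper's approach is that the \(\KK(X)\)-element is built from an explicit natural \Star{}isomorphism and exterior product with a single fixed Kasparov class, so its compatibility with the distinguished-ideal filtration is automatic---there is nothing to check beyond naturality. Your approach, by contrast, requires opening up the Fack--Skandalis construction and verifying that the Thom cycle itself is \(X\)\nb-equivariant; this is plausible but, as you note, it is genuine work that you have not carried out. Both routes end at the same place via Proposition~4.9, but the paper's detour through \(\KK^\R\) and the trivial-action comparison buys you the hard step for free.
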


\begin{proof}
  We adopt the approach from \cite{Koehler:Thesis}*{Proposition
    8.3}.  Let \(\tilde{A}\) denote the \(\Cst\)\nb-al\-ge\-bra \(A\)
  over~\(X\) with the trivial \(\R\)\nb-action.  Then
  \(\CONT_0(\R,A)\) and \(\CONT_0(\R,\tilde{A})\) with the diagonal
  actions are naturally \Star{}isomorphic as
  \(\R\)\nb-\(\Cst\)\nb-algebras.  By
  \cite{Kasparov:Novikov}*{Theorem 5.9} we may fix a
  \(\KK^\R\)-equivalence between \(\C[-1]\) and \(\CONT_0(\R)\),
  where \(\CONT_0(\R)\) carries the translation action.  In
  combination, this gives a natural \(\KK^\R\)-equivalence between
  \(A[-1]\) and \(\tilde{A}[-1]\), and consequently also between
  \(A\) and \(\tilde{A}\).  Taking crossed products gives a natural
  \(\KK\)-equivalence between \(A\rtimes_\alpha\R\) and \(A[-1]\).
  As in the previous proof, the naturality of the constructed cycle
  and \cite{Meyer-Nest:Bootstrap}*{Proposition 4.9} show that this
  is a \(\KK(X)\)-equivalence.
\end{proof}

\begin{proposition}[Pimsner--Voiculescu Triangle]
  Let \((A,\alpha)\) be a \(\Z\)\nb-\(\Cst\)\nb-algebra over~\(X\).
  Assume that~\(X\) is finite and \(A\inOb\KKcat(X)_\loc\).  Then
  there is a natural exact triangle in \(\KKcat(X)\) of the form
  \[
  A[-1]\rtimes_\alpha\Z\to A\xrightarrow{\alpha(1)-\mathrm{id}} A
  \to A\rtimes_\alpha\Z.
  \]
\end{proposition}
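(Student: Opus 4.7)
The plan is to realise the desired triangle as a rotation of the natural exact triangle coming from the evaluation extension of the mapping torus, after using the three preceding propositions to identify $A\rtimes_\alpha\Z$ with a suspension of the mapping torus. Set $T_\alpha\defeq\Ind_\Z^\R A$, regarded as an $\R$-$\Cst$-algebra over~$X$ in the natural way. The finiteness of $X$ and the assumption $A\in\KKcat(X)_\loc$ carry over to $T_\alpha$, so Green Imprimitivity and Connes--Thom yield natural $\KK(X)$-equivalences
\[
A\rtimes_\alpha\Z \;\simeq\; T_\alpha\rtimes_{\Ind\alpha}\R \;\simeq\; T_\alpha[-1].
\]
Since taking crossed products by a fixed group commutes with the suspension functor, this also rewrites as $T_\alpha\simeq A[-1]\rtimes_\alpha\Z$ after invoking Bott periodicity.

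Next, evaluation at the base point $0\in\R/\Z$ furnishes a natural $X$-equivariant short exact sequence
\[
A[-1] \into T_\alpha \prto A,
\]
whose kernel $\CONT_0\bigl((0,1),A\bigr)\cong SA = A[-1]$ inherits its $X$-structure from $A$ and which admits an $X$-equivariant completely positive section obtained from a bump function on $S^1$. It therefore produces an exact triangle $A[-1]\to T_\alpha\to A\xrightarrow{\partial} A$ in $\KKcat(X)$. Rotating once and substituting the equivalences of the previous paragraph yields a triangle of the required shape
\[
A[-1]\rtimes_\alpha\Z \to A \xrightarrow{\partial} A \to A\rtimes_\alpha\Z.
\]

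The only nontrivial remaining step is to identify the connecting class $\partial\in\KK(X;A,A)$ with $\alpha(1)-\mathrm{id}$; this is the main obstacle, since everything above amounts to formal manipulation in the triangulated category using the previous three propositions. Because the forgetful functor $\KKcat(X)\to\KKcat$ is triangulated and all the preceding constructions are natural and carried out pointwise in $X$, matching $\partial$ against $\alpha(1)-\mathrm{id}$ reduces, after forgetting the $X$-structure, to the classical identification of the boundary class of the mapping torus extension with $\alpha_*-\mathrm{id}$, which is the heart of the original Pimsner--Voiculescu argument.
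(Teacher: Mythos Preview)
Your overall strategy matches the paper's: construct the mapping torus extension, identify the mapping torus with $A\rtimes_\alpha\Z$ up to suspension via Green imprimitivity and Connes--Thom, and then rotate. The paper also uses the affine section $a\mapsto\bigl((1-t)a+t\alpha(a)\bigr)$ rather than a ``bump function''; a genuine bump function on~$S^1$ does not satisfy the twisting condition $f(1)=\alpha(f(0))$, so you should write down the section explicitly.

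The real gap is in your last paragraph. You argue that identifying $\partial$ with $\alpha(1)-\id$ ``reduces, after forgetting the $X$-structure, to the classical identification''. But the forgetful functor $\KKcat(X)\to\KKcat$ is not faithful in general, so agreement of two classes in $\KK(A,A)$ does not force agreement in $\KK(X;A,A)$. Knowing the classical Pimsner--Voiculescu computation of~$\partial$ in~$\KK$ is therefore not enough. What is needed is to run the computation itself \emph{inside} $\KK(X)$. The paper does exactly this: it writes down explicit morphisms of extensions
\[
\bigl(A[-1]\rightarrowtail T_\alpha\twoheadrightarrow A\bigr)
\longrightarrow
\bigl(A[-1]\rightarrowtail \CONT([0,1],A)\twoheadrightarrow A\oplus A\bigr)
\longleftarrow
\bigl(A[-1]\rightarrowtail \CONT_0([0,1),A)\twoheadrightarrow A\bigr),
\]
and the analogous one with $\CONT_0((0,1],A)$, all of which are maps of $\Cst$\nb-algebras over~$X$. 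Naturality of the boundary map in the $\KK(X)$-six-term sequence then yields $\partial=\alpha(1)-\id$ in $\KK(X;A,A)$ directly, without any appeal to faithfulness of the forgetful functor. Your sentence ``all the preceding constructions are natural and carried out pointwise in~$X$'' gestures toward this, but the argument you actually give (forget~$X$, then cite the classical result) does not use it and is not valid as stated.
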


\begin{proof}
  We abbreviate \(\alpha=\alpha(1)\) and let
  \(T_\alpha=\bigl\{f\colon\CONT(\left[0,1\right],A)\mid
  f(1)=\alpha\bigl(f(0)\bigr)\bigr\}\) be the mapping torus
  of~\(\alpha\).  The extension \[A[-1]\rightarrowtail
  T_\alpha\mathop{\stackrel{\ev_0} {\twoheadrightarrow}} A\] has a
  completely positive \(X\)\nb-equivariant section taking \(a\in A\)
  to the affine function \((1-t)\cdot a +t\cdot\alpha(a)\).  We get
  a natural exact triangle in \(\KKcat(X)\) of the form
  \begin{equation}
    \label{eq:exact_triangle_for_PV}
    A[-1]\to A[-1]\to T_\alpha\to A.
  \end{equation}
  The \(\R\)\nb-\Cstar{}algebra \(T_\alpha\) is naturally
  \Star{}isomorphic over~\(X\) to \(\Ind_\Z^\R(A,\alpha)\).  By
  Green's imprimitivity theorem and the Connes--Thom isomorphism, we
  have natural \(\KK(X)\)-equivalences
  \[
  A\rtimes_\alpha\Z\simeq\Ind_\Z^\R(A,\alpha)\rtimes_{\Ind\alpha}\R
  \simeq T_\alpha\rtimes\R\simeq T_\alpha[-1].
  \]
  Plugging this into~\eqref{eq:exact_triangle_for_PV} and rotating
  as appropriate gives an exact triangle of the desired form.  The
  formula for the map from~\(A\) to~\(A\) is a consequence of the
  naturality of the boundary map in the \(\KK\)\nb-theoretic
  six-term sequence applied to the morphisms of extensions
  \[
  \begin{tikzpicture}[baseline=(current bounding box.west)]
    \matrix(m)[cd,column sep=1em]{
      A[-1]&T_\alpha&A\\
      A[-1]&\CONT\bigl(\left[0,1\right],A\bigr)&A\oplus A,\\
    };
    \begin{scope}[cdar]
      \draw[>->] (m-1-1) -- (m-1-2);
      \draw[->>] (m-1-2) -- (m-1-3);
      \draw[>->] (m-2-1) -- (m-2-2);
      \draw[->>] (m-2-2) -- (m-2-3);
      \draw[>->] (m-1-2) -- (m-2-2);
      \draw (m-1-3) -- node {\((\id_A,\alpha)\)}  (m-2-3);
    \end{scope}
    \draw[double, double equal sign distance] (m-1-1) --   (m-2-1);
  \end{tikzpicture}\quad
  \begin{tikzpicture}[baseline=(current bounding box.west)]
    \matrix(m)[cd,row sep=12pt, column sep=1em]{
      A[-1]&\CONT_0\bigl(\left[0,1\right),A\bigr)& A\\
      A[-1]&\CONT\bigl(\left[0,1\right],A\bigr)&A\oplus A\\
      A[-1]&\CONT_0\bigl(\left(0,1\right],A\bigr)& A,\\
    };
    \begin{scope}[cdar]
      \draw[>->] (m-1-1) -- (m-1-2);
      \draw[->>] (m-1-2) -- (m-1-3);
      \draw[>->] (m-2-1) -- (m-2-2);
      \draw[->>] (m-2-2) -- (m-2-3);
      \draw[>->] (m-3-1) -- (m-3-2);
      \draw[->>] (m-3-2) -- (m-3-3);
      \draw[>->] (m-1-2) -- (m-2-2);
      \draw[>->] (m-3-2) -- (m-2-2);
      \draw (m-1-3) -- node {\(\iota_0\)}  (m-2-3);
      \draw (m-3-3) -- node[swap] {\(\iota_1\)}  (m-2-3);
    \end{scope}
    \draw[double, double equal sign distance] (m-1-1) --   (m-2-1);
    \draw[double, double equal sign distance] (m-3-1) --   (m-2-1);
  \end{tikzpicture}
  \]
  together with the elementary fact that the extensions
  \(\C[-1]\rightarrowtail\CONT_0\bigl(\left[0,1\right)\bigr)
  \twoheadrightarrow\C\) and \(\C[-1]\rightarrowtail
  \CONT_0\bigl(\left(0,1\right]\bigr)\twoheadrightarrow\C\)
  correspond to the classes \(-\id_{\C[-1]}\) and \(\id_{\C[-1]}\)
  in \(\KK_1(\C,\C[-1])\cong \KK_0(\C[-1],\C[-1])\), respectively.
\end{proof}

\begin{corollary}[Dual Pimsner--Voiculescu Triangle]
  \label{cor:dual_PV}
  Let \((A,\alpha)\) be a \(\T\)\nb-\(\Cst\)\nb-algebra over~\(X\).
  Assume that \(X\) is finite and
  \(A\rtimes_\alpha\T\inOb\KKcat(X)_\loc\).  Then there is a natural
  exact triangle in \(\KKcat(X)\) of the form
 \[
  A[-1]\to A\rtimes_\alpha\T\xrightarrow{\hat\alpha(1)-\mathrm{id}}
  A\rtimes_\alpha\T\to A.
 \]
\end{corollary}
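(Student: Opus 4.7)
The plan is to deduce the dual triangle from the Pimsner--Voiculescu triangle for $\Z$-actions by applying it to the crossed product $A \rtimes_\alpha \T$ equipped with its dual $\hat\alpha$-action of $\hat\T \cong \Z$, and then to identify the fourth vertex with $A$ via Takai duality.

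More precisely, first I would observe that since $A$ is a $\T$-$\Cst$-algebra over~$X$, the crossed product $B \defeq A\rtimes_\alpha \T$ is a $\Z$-$\Cst$-algebra over~$X$ with action $\beta \defeq \hat\alpha$. The hypothesis $B \inOb \KKcat(X)_\loc$ is exactly what is needed to apply the previous proposition. That proposition then gives a natural exact triangle
\[
B[-1]\rtimes_\beta\Z\to B\xrightarrow{\beta(1)-\mathrm{id}} B
\to B\rtimes_\beta\Z
\]
in $\KKcat(X)$, which up to the identification of the fourth term already has the correct middle map $\hat\alpha(1)-\mathrm{id}$.

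Next I would use Takai duality to identify $B\rtimes_\beta \Z = (A\rtimes_\alpha \T)\rtimes_{\hat\alpha} \hat\T$ with $A\otimes \Comp(L^2\T)$ as $\Cst$-algebras over~$X$, hence with $A$ in $\KKcat(X)$. Rotating the triangle and shifting, the term $B[-1]\rtimes_\beta\Z$ becomes $A[-1]$ under the same identification. This produces the desired triangle
\[
A[-1]\to A\rtimes_\alpha\T\xrightarrow{\hat\alpha(1)-\mathrm{id}}
A\rtimes_\alpha\T\to A.
\]

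The only step that requires any care is verifying that the Takai duality $\KK(X)$-equivalence intertwines the map labelled $\beta(1)-\mathrm{id}$ with itself after rotation (i.e.\ that the naturality of the Pimsner--Voiculescu triangle with respect to the identity map on~$B$ persists through the substitution of the fourth vertex). This follows directly from the naturality statement in the preceding proposition together with the naturality of the Takai isomorphism, so no additional computation is needed. I do not expect any genuine obstacle; the corollary is essentially a formal consequence of the three preceding propositions.
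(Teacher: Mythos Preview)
Your proposal is correct and follows exactly the approach of the paper, whose proof reads in full: ``This follows from the Pimsner--Voiculescu Triangle and Takai Duality.'' You have simply spelled out what that sentence means; the extra care you mention about the middle map is unnecessary, since replacing the fourth vertex of an exact triangle by an isomorphic object leaves the other three maps untouched.
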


\begin{proof}
  This follows from the Pimsner--Voiculescu Triangle and Takai
  Duality.
\end{proof}

\subsection{Application to graph algebras}
\label{sec:graph_apply}

Let \(A=\Cst(E)\) be the \(\Cst\)\nb-algebra of a countable
graph~\(E\).  We assume that~\(A\) has only finitely many ideals or,
equivalently, that its primitive ideal space is finite; this is
necessary for our machinery to work.  We set \(X=\Prim(A)\).  The
gauge action \(\gamma\colon\T\curvearrowright A\) turns~\(A\) into a
\(\T\)\nb-\(\Cst\)\nb-algebra over~\(X\).  Corollary~\ref{cor:dual_PV}
provides the following natural exact triangle in~\(\KKcat(X)\):
\[
A[-1]\to A\rtimes_\gamma\T\xrightarrow{\hat\gamma(1)^{-1}-\mathrm{id}} 
A\rtimes_\gamma\T\to A
\]

The \(\Cst\)\nb-algebra~\(A\rtimes_\gamma\T\) is AF.  Hence the odd
part of \(\XK(A\rtimes_\gamma\T)\) vanishes.  Applying the
functor~\(\XK\) to the dual Pimsner--Voiculescu triangle, we get the
following dual Pimsner--Voiculescu exact sequence:
\begin{equation}
  \label{eq:PV_ses}
 0\to\XK_1(A)\to\XK_0(A\rtimes_\gamma\T)
 \xrightarrow{\hat\gamma(1)_*^{-1}-\mathrm{id}}
 \XK_0(A\rtimes_\gamma\T)\to\XK_0(A)\to 0.
\end{equation}
The module \(\XK(A\rtimes_\gamma\T)\) is usually not projective, so we
cannot directly apply Theorem~\ref{the:obstruction_class_cone}.  For
this purpose, we replace \(A\rtimes_\gamma\T\) by a suitable
\(\Cst\)\nb-subalgebra.  This construction is based on the ingredients
of the computation of the \(\K\)\nb-theory of graph
\(\Cst\)\nb-algebras in
\cite{Raeburn-Szymanski:CK_algs_of_inf_graphs_and_matrices}*{Section~3}
and \cite{Bates-Hong-Raeburn-Szymanski:Ideal_structure}*{Section~6};
we shall use the notation and a number of results proved in these
articles.

We may identify \(\Cst(E)\rtimes_\gamma\T\) with the \(\Cst\)\nb-algebra
of the so-called skew-product graph \(E\times_1\Z\).  This becomes an
isomorphism of \(\Cst\)\nb-algebras over~\(X\) via the canonical definitions
\(\bigl(\Cst(E)\rtimes_\gamma\T\bigr)(U)\defeq\bigl(\Cst(E)(U)\bigr)
\rtimes_\gamma\T\) and \(\Cst(E\times_1\Z)(U)\defeq J_{H_U\times\Z,B_U
\times\Z}\) where \((H_U,B_U)\) is the admissible pair such that
\(\Cst(E)(U)=J_{H_U,B_U}\).

We let \(N\) denote the set \(\{n\in\Z\mid n
\leq 0\}\) and \(N^*=N\setminus\{0\}\).  We let \(\hat{Q}_0=\hat{Q}_1\)
be the \(\Cst\)\nb-subalgebra of \(\Cst(E\times_1\Z)\) associated to the
subgraph \(E\times_1 N\), the restriction of the graph \(E\times_1\Z\) to the
subset of vertices \(E\times N\).  This is a \(\Cst\)\nb-algebra over~\(X\)
via \(\Cst(E\times_1 N)(U)=J_{H_U\times N, B_U\times N^*}\), and the
inclusion map \(\Cst(E\times_1 N)\hookrightarrow\Cst(E\times_1\Z)\) is a
\Star{}homomorphism over~\(X\).

\begin{lemma}
  \label{lem:graph_Q_i_projective}
  The module~\(\XK(\hat{Q}_0)\) is projective and concentrated in even degree.
\end{lemma}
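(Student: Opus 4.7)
Both claims follow from the observation that every edge in $E\times_1 N$ strictly increases the $\Z$-coordinate, so $E\times_1 N$ is acyclic. The $C^*$-algebra of a countable acyclic graph is AF; the same applies to each distinguished ideal $\hat{Q}_0(U)=J_{H_U\times N,\, B_U\times N^*}$, since this is the $C^*$-algebra of the hereditary saturated subgraph and acyclicity is inherited. Hence $K_1\bigl(\hat{Q}_0(U)\bigr)=0$ for every $U\in\mathbb{O}(X)$, which shows at once that $\XK(\hat{Q}_0)$ is concentrated in even degree.

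For projectivity, I would filter $\hat{Q}_0$ by the increasing chain of ideals $J_k$ generated by the vertex projections at levels $0,-1,\ldots,-k$, so that $\hat{Q}_0=\varinjlim_k J_k$. The level-$0$ vertices are sinks in $E\times_1 N$; hence $J_0$ is $X$-equivariantly Morita--Rieffel equivalent to the commutative $C^*$-subalgebra spanned by $\{p_{(v,0)}\mid v\in E^0\}$, and this decomposes as $\bigoplus_{v\in E^0} i_{x_v}\C$ for points $x_v\in X$ that one reads off the Bates--Hong--Raeburn--Szymanski description of $\Prim\bigl(\Cst(E)\bigr)$. The same argument applied to the quotient graph obtained from $E\times_1 N$ by deleting levels $0,\ldots,-k$ shows that every subquotient $J_{k+1}/J_k$ is $X$-equivariantly Morita--Rieffel equivalent to $\bigoplus_v i_{x_v}\C$, with one summand per vertex at level $-(k+1)$. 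Consequently $\XK(J_{k+1}/J_k)$ is a direct sum of the representable projective modules $\Z[X]e_{x_v}$.

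Since every odd $\XK$ vanishes, the long exact sequence for $J_k\into J_{k+1}\prto J_{k+1}/J_k$ reduces to a short exact sequence in even degree, and this splits because the quotient is projective. Induction then gives that each $\XK(J_k)$ is a direct sum of representables; because $\Ideal$ is compatible with countable direct sums, $\XK$ preserves countable direct limits, so $\XK(\hat{Q}_0)=\varinjlim_k\XK(J_k)$ is itself a direct sum of representables, and hence projective. The main technical obstacle is keeping precise track of the $X$-structure through the filtration: at each stage one must identify the point $x_v\in X$ attached to a new sink in the subquotient and verify that the Morita--Rieffel equivalence is genuinely $X$-equivariant. Both steps are essentially readings of the explicit description of primitive ideals of graph $C^*$-algebras via maximal tails and breaking vertices in~\cite{Bates-Hong-Raeburn-Szymanski:Ideal_structure}.
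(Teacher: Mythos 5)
Your argument is correct in outline, but it follows a genuinely different route from the paper's.  The paper handles projectivity in one stroke by invoking the criterion of \cite{Bentmann:Real_rank_zero_and_int_cancellation}*{Lemma~4.10}, which reduces projectivity of \(\XK(\hat{Q}_0)\) to freeness of the \(\K_0\)-groups and vanishing of the \(\K_1\)-groups of all distinguished subquotients.  Those conditions are then checked by identifying the distinguished quotients with graph algebras via \cite{Bates-Hong-Raeburn-Szymanski:Ideal_structure}*{Corollary~3.5}, and using \cite{Bates-Hong-Raeburn-Szymanski:Ideal_structure}*{Lemma~6.2} together with continuity of \(\K\)-theory.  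By contrast, you build the projective module explicitly from a level filtration, exhibiting \(\XK(\hat{Q}_0)\) as a countable direct sum of the representable projectives \(\Z[X]e_x\) via split extensions and a direct limit.  That is a legitimate alternative and arguably more concrete, but it buys explicitness at the cost of bookkeeping that the cohomological criterion avoids.

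Two points in your filtration need more care than you give them. First, the set of level-\((-(k+1))\) vertices that actually contribute to \(J_{k+1}/J_k\) is not all of \(E^0\): the saturated hereditary closure \(H_k\) of levels \(0,\dotsc,-k\) in \(E\times_1 N\) typically contains many vertices at levels below \(-k\) (any \((v,n)\) with \(v\) regular in \(E\) and \(n\geq -k-1\) lies in \(H_k\) once levels \(0,\dotsc,-k\) do), so the surviving summands in \(J_{k+1}/J_k\) correspond only to those \((v,-(k+1))\notin H_k\) — which means \(v\) must be an infinite emitter or a sink in \(E\).  In particular, for a row-finite graph with no sinks one already has \(J_0 = \hat{Q}_0\).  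Second, the quotient \(\hat{Q}_0/J_k\) is the graph algebra of \(\bigl((E\times_1 N)/H_k\bigr)\setminus\beta(\emptyset)\), which in the presence of infinite emitters acquires new sink vertices \(\beta(w)\) for breaking vertices \(w\) of \(H_k\); these extra sinks do not contribute to the ideal generated by the level-\((-(k+1))\) projections, but they do affect the identification of \(\Prim\) with a subset of \(X\), and hence the points \(x_v\) you attach to the surviving summands.  Both issues are repairable by the same kind of reading of \cite{Bates-Hong-Raeburn-Szymanski:Ideal_structure} that you flag at the end, but they are where the actual work sits and should be carried out rather than only acknowledged.
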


\begin{proof}
  The \(\Cst\)\nb-algebra~\(\hat{Q}_0\) is an AF-algebra because the
  graph~\(E\times_1 N\) has no cycle.  We claim that all distinguished
  subquotients of~\(\hat{Q}_0\) have free \(\K_0\)\nb-groups and
  vanishing \(\K_1\)\nb-groups.  Since ideals and quotients of
  AF-algebras are again AF and since AF-algebras have vanishing
  \(\K_1\)\nb-groups, it suffices to show that the \(\K_0\)\nb-group
  of every distinguished quotient of \(\Cst(E\times_1 N)\) is free.
  By
  \cite{Bates-Hong-Raeburn-Szymanski:Ideal_structure}*{Corollary~3.5},
  the quotient of \(\Cst(E\times_1 N)\) by the ideal \(J_{H_U\times_1
    N, B_U\times_1 N^*}\) is isomorphic to the \(\Cst\)\nb-algebra of
  the graph \(\bigl((E\times_1 N) / (H\times_1 N)\bigr) \setminus
  \beta(B\times_1 N^*)\); its \(\K\)\nb-theory is free by
  \cite{Bates-Hong-Raeburn-Szymanski:Ideal_structure}*{Lemma~6.2} and
  continuity of \(\K\)\nb-theory.  Now it follows from
  \cite{Bentmann:Real_rank_zero_and_int_cancellation}*{Lemma~4.10}
  that the module~\(\XK(\hat{Q}_0)\) is projective.
\end{proof}
  
The dual action on \(A\rtimes_\gamma\T\) corresponds to the shift
automorphism~\(\beta\) on \(\Cst(E\times_1\Z)\) whose inverse preserves
the subalgebra \(\hat{Q}_0=\Cst(E\times_1 N)\).  We
write \(s\colon \hat{Q}_1\to \hat{Q}_0\) for the restricted morphism
\(1-\beta^{-1}\) and form an exact triangle
\[
  C_s\to \hat{Q}_1\xrightarrow{s} \hat{Q}_0\to  C_s[1].
\]
By an axiom of triangulated categories, there is a morphism \(f\colon
C_s\to A[-1]\) such that the diagram
\begin{equation}
\label{eq:morphism_of_triangles}
  \begin{tikzpicture}[baseline=(current bounding box.west)]
    \matrix(m)[cd,column sep=4em]{
      C_s&\hat{Q}_1&\hat{Q}_0&C_s[1]\\
      A[-1]&A\rtimes_\gamma\T&A\rtimes_\gamma\T&A\\
    };
    \begin{scope}[cdar]
      \draw (m-1-1) -- (m-1-2);
      \draw (m-1-2) -- node {\(1-\beta^{-1}\)} (m-1-3);
      \draw (m-1-3) -- (m-1-4);
      \draw (m-2-1) -- (m-2-2);
      \draw (m-2-2) -- node {\(1-\hat\gamma^{-1}\)} (m-2-3);
      \draw (m-2-3) -- (m-2-4);
      \draw (m-1-1) -- node[swap] {\(f\)} (m-2-1);
      \draw[>->] (m-1-2) -- (m-2-2);
      \draw[>->] (m-1-3) -- (m-2-3);
      \draw (m-1-4) -- node {\(f[1]\)} (m-2-4);
    \end{scope}
  \end{tikzpicture}
\end{equation}
commutes.  As in
\cite{Raeburn-Szymanski:CK_algs_of_inf_graphs_and_matrices}*{Lemma~3.3},
it follows that the morphism \(f_*\colon\K_*\bigl(C_s(Y)\bigr)\to
\K_*\bigl(A(Y)\bigr)\) induced by~\(f\) is bijective for every closed
subset \(Y\subseteq X\).  Hence, by
\cite{Meyer-Nest:Bootstrap}*{Proposition~4.15} and the Five Lemma, it is
a \(\KK(X)\)-equivalence.

\begin{lemma}
  \label{lem:graph_kernel_K1_projective}
  The module \(\ker\bigl(\XK(s)\bigr)\) is projective.
\end{lemma}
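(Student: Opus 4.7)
The plan is to identify $\ker\bigl(\XK(s)\bigr)$ with $\XK_1(A)$ and then prove the latter is projective. First, I would apply the functor $\XK$ to the exact triangle $C_s\to \hat{Q}_1\xrightarrow{s}\hat{Q}_0\to C_s[1]$ and invoke Lemma~\ref{lem:graph_Q_i_projective}, which asserts that $\XK(\hat{Q}_0)$ and $\XK(\hat{Q}_1)$ are concentrated in even degree. The six-term exact sequence then collapses to a four-term exact sequence
\[
0\to \XK_0(C_s)\to \XK_0(\hat{Q}_1)\xrightarrow{\XK(s)} \XK_0(\hat{Q}_0)\to \XK_1(C_s)\to 0,
\]
from which \(\ker\bigl(\XK(s)\bigr)\cong \XK_0(C_s)\). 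Since $f\colon C_s\to A[-1]$ was established, just above the lemma, to be a $\KK(X)$-equivalence, $\XK_0(C_s)\cong \XK_0(A[-1])\cong \XK_1(A)$ as a $\Z[X]$-module. It therefore suffices to show that $\XK_1(A)$ is projective.

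For this, I would apply the projectivity criterion of \cite{Bentmann:Real_rank_zero_and_int_cancellation}*{Lemma~4.10}, exactly as was done in the proof of Lemma~\ref{lem:graph_Q_i_projective}: because the $\Z[X]$-module structure on $\XK_*(A)$ comes from grading-preserving maps induced by inclusions of distinguished ideals, the odd part $\XK_1(A)$ is itself a $\Z[X]$-module, and its projectivity is reduced to checking that $\K_1\bigl(A(Y)\bigr)$ is free abelian for every distinguished locally closed subset $Y\subseteq X$. For a graph \(\Cst\)\nb-algebra $A=\Cst(E)$ with finitely many ideals, every distinguished subquotient $A(Y)$ is, up to stable isomorphism, itself the $\Cst$-algebra of an admissible subgraph, by the ideal-structure results used above from \cite{Bates-Hong-Raeburn-Szymanski:Ideal_structure}. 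Then, for any graph $\Cst$-algebra $\Cst(F)$, the Pimsner--Voiculescu-type exact sequence (of which \eqref{eq:PV_ses} is a special case) realises $\K_1\bigl(\Cst(F)\bigr)$ as the kernel of an integer matrix on a free abelian group, and so as a subgroup of a free abelian group; hence it is free abelian.

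The main conceptual step is the application of Bentmann's criterion separately to $\XK_1(A)$, which is legitimate because the module structure respects the $\Z/2$-grading. The rest of the argument amounts to the standard observation that $\K_1$ of a graph $\Cst$-algebra, and hence of each of its gauge-invariant subquotients, is automatically torsion-free.
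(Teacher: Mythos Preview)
Your approach is the same as the paper's: identify \(\ker\bigl(\XK(s)\bigr)\) with \(\XK_1\bigl(\Cst(E)\bigr)\) and then argue projectivity via the criterion from \cite{Bentmann:Real_rank_zero_and_int_cancellation}*{Lemma~4.10}. There is, however, a missing hypothesis in your application of that criterion. Freeness of \(\K_1\bigl(A(Y)\bigr)\) for every locally closed \(Y\subseteq X\) is not sufficient on its own; one also needs the inclusion-induced maps \(\K_1\bigl(A(U)\bigr)\to\K_1\bigl(A(V)\bigr)\) for open \(U\subseteq V\) to be injective, and this is exactly what the vanishing of the exponential maps in the associated six-term sequences gives. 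The paper's proof states this explicitly: ``\(\Cst(E)\) has vanishing exponential maps.'' In the proof of Lemma~\ref{lem:graph_Q_i_projective} the issue did not arise because \(\K_1\) of every subquotient was zero, so all such boundary maps were trivially zero; here the roles of \(\K_0\) and \(\K_1\) are not symmetric, and you need the separate (standard) fact that extensions of graph \(\Cst\)\nb-algebras by gauge-invariant ideals have vanishing exponential map. Without it, a \(\Z[X]\)-module whose entry groups are all free abelian can still fail to be projective, so the reduction you describe is incomplete.
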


\begin{proof}
  This module is concentrated in even degree and isomorphic to
  \(\XK_1\bigl(\Cst(E)\bigr)\).  Since~\(\Cst(E)\) has vanishing
  exponential maps and all its subquotients have free
  \(\K_1\)\nb-groups, it follows as in
  \cite{Bentmann:Real_rank_zero_and_int_cancellation}*{Lemma~4.10}
  that this module is projective.
\end{proof}

Since the map~\(f\) in~\eqref{eq:morphism_of_triangles} is a
\(\KK(X)\)-equivalence, we get an exact triangle
\[
A[-1]\to \hat{Q}_1\to \hat{Q}_0\to A.
\]
Lemmas~\ref{lem:graph_Q_i_projective}
and~\ref{lem:graph_kernel_K1_projective} show that
Theorem~\ref{the:obstruction_class_cone} applies.  The obstruction
class is the image of the top row in~\eqref{eq:morphism_of_triangles}
under the functor~\(\XK\).  The vertical maps in this diagram show
that~\(\XK\) applied to the bottom row also represents the same class
in~\(\Ext^2\).  The bottom row is exactly the dual Pimsner--Voiculescu
sequence~\eqref{eq:PV_ses}, as asserted.  Combining these computations
with Kirchberg's Classification Theorem gives the following theorem:

\begin{theorem}
  \label{thm:graph_alg_classification}
  Let \(A_1\) and~\(A_2\) be purely infinite graph
  \(\Cst\)\nb-algebras such that \(\Prim(A_1)\cong\Prim(A_2)\cong X\)
  is finite.  Then any isomorphism
  \(\XK\delta(A_1)\cong\XK\delta(A_2)\) lifts to a stable isomorphism
  between \(A_1\) and~\(A_2\).  The obstruction classes
  \(\delta(A_i)\) in \(\Ext^2\bigl(\XK(A_i),\XK(A_i)[-1]\bigr)\) are
  determined by the dual Pimsner--Voiculescu
  sequences~\eqref{eq:PV_ses} for the gauge actions
  \(\gamma\colon\T\curvearrowright A_i\).
\end{theorem}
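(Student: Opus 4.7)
The plan is to package together the preparatory work from Section~\ref{sec:graph_apply} and combine it with the general classification result of Theorem~\ref{thm:Ext_enriched_classification} and with Kirchberg's Classification Theorem. First I would note that the construction preceding the theorem already produces, for any such~\(A_i\), an exact triangle \(A_i[-1]\to\hat{Q}_1\to\hat{Q}_0\to A_i\) in~\(\KKcat(X)\) with the \(\XK\)\nb-projective pieces guaranteed by Lemmas~\ref{lem:graph_Q_i_projective} and~\ref{lem:graph_kernel_K1_projective}.  In particular, \(\XK(A_i)\) has a length\nb-\(2\) projective resolution, so \(A_i\in\gen{\Proj_\Ideal}\) and we land squarely inside the setting where the results of Section~\ref{sec:lift_twodim} apply; the relevant Abelian category is \splitt{} by Example~\ref{exa:Ztwo_graded_split}, so the parity hypotheses of Section~\ref{sec:even_case} are satisfied.

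Next I would handle the second assertion about obstruction classes. Theorem~\ref{the:obstruction_class_cone} identifies \(\delta(A_i)\) with the class in \(\Ext^2(\XK(A_i),\XK(A_i)[-1])\) of the \(2\)\nb-step extension obtained by applying~\(\XK\) to the triangle \(A_i[-1]\to\hat{Q}_1\xrightarrow{s}\hat{Q}_0\to A_i\).  Because the morphism of triangles~\eqref{eq:morphism_of_triangles} has \(\KK(X)\)\nb-equivalences as vertical maps (verified in Section~\ref{sec:graph_apply} via pointwise bijectivity on \(\K\)\nb-theory and \cite{Meyer-Nest:Bootstrap}*{Proposition~4.15}), functoriality shows that this obstruction class is represented equally well by the image under~\(\XK\) of the bottom row, which is exactly the dual Pimsner--Voiculescu sequence~\eqref{eq:PV_ses}.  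This is the content of the second assertion.

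For the first assertion I would invoke Theorem~\ref{thm:Ext_enriched_classification}: since both~\(A_i\) live in~\(\gen{\Proj_\Ideal}\) and \(\XK\) takes values in a \splitt{} Abelian category of global dimension at most~\(2\), an isomorphism \(\XK\delta(A_1)\cong\XK\delta(A_2)\) in~\(\Abel\delta\) lifts to an isomorphism \(A_1\to A_2\) in~\(\KKcat(X)\), i.e.\ a \(\KK(X)\)\nb-equivalence.  The final step is analytic: since each~\(A_i\) is a purely infinite, nuclear, separable graph \(\Cst\)\nb-algebra, its stabilisation \(A_i\otimes\Comp\) is a stable Kirchberg \(X\)\nb-algebra; applying Kirchberg's Classification Theorem over~\(X\) (as quoted before the corollary in Section~\ref{sec:KK_X}) lifts the \(\KK(X)\)\nb-equivalence to a \Star{}isomorphism \(A_1\otimes\Comp\cong A_2\otimes\Comp\) over~\(X\), which is the stable isomorphism claimed.

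The main obstacle I anticipate is not the abstract lift itself but the verification that graph \(\Cst\)\nb-algebras with finitely many ideals fit precisely into the framework of Section~\ref{sec:graph_apply} and satisfy the hypotheses of Kirchberg's Classification Theorem uniformly: one must check that \(A_i\) is tight over \(X=\Prim(A_i)\) and belongs to \(\Boot(X)\), that the machinery of Section~\ref{sec:crossed_products} applies (which requires \(A_i\in\KKcat(X)_\loc\)), and that purely infinite graph \(\Cst\)\nb-algebras satisfy the strong pure infiniteness and nuclearity assumptions used by Kirchberg.  These are each either standard or already justified in the preceding subsection, but need to be assembled carefully; everything else reduces to invoking Theorems~\ref{the:obstruction_class_cone} and~\ref{thm:Ext_enriched_classification}.
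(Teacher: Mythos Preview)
Your overall strategy matches the paper's, but there is one genuine gap. You invoke Theorem~\ref{thm:Ext_enriched_classification}, whose hypothesis is that~\(\Abel\) has global dimension~\(2\); this is established in the paper only for unique path spaces (Proposition~\ref{pro:UP_dimension}), and the opening sentence of Section~\ref{sec:graph} explicitly allows that for an arbitrary finite \(T_0\)\nb-space \(X=\Prim(A_i)\) the category \(\Modc{\Z[X]}^{\Z/2}\) may have larger global dimension. What the preparatory lemmas give is only that the \emph{particular} modules \(\XK(A_i)\) have projective dimension at most~\(2\). The paper sidesteps this by citing Theorem~\ref{the:obstruction_class_cone} directly, whose hypotheses concern only the individual exact sequence~\eqref{eq:cone_MQQM}, not the whole category. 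You can repair your argument in the same way, or by observing that the proof of Theorem~\ref{thm:Ext_enriched_classification}---in particular formula~\eqref{eq:differential_in_terms_of_deltas} and the lifting of isomorphisms---only uses that the two objects in question have length\nb-\(2\) projective resolutions, not that every object does.

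On the analytic side, Kirchberg's theorem requires \(\mathcal{O}_\infty\)\nb-absorption, not mere pure infiniteness. The paper supplies this via \cite{Kirchberg-Rordam:Infinite_absorbing}*{Corollary~9.4}: purely infinite, separable, nuclear \(\Cst\)\nb-algebras of real rank zero absorb~\(\mathcal{O}_\infty\), and graph \(\Cst\)\nb-algebras have real rank zero. You flag this verification as an anticipated obstacle but do not carry it out; it is the one place where an external citation beyond Kirchberg's theorem itself is needed.
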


\begin{proof}
  By \cite{Kirchberg-Rordam:Infinite_absorbing}*{Corollary 9.4}, a
  purely infinite, separable, nuclear \Cstar{}algebra with real rank
  zero absorbs the infinite Cuntz algebra \(\mathcal{O}_\infty\)
  tensorially.  Hence Kirchberg's Classification Theorem applies.
  It gives the result together with
  Theorem~\ref{the:obstruction_class_cone}.
\end{proof}

Roughly speaking, stable, purely infinite graph \(\Cst\)\nb-algebras
with finitely many ideals are strongly classified by their dual
Pimsner--Voiculescu sequence in~\(\XK\) (up to the correct notion of
equivalence).

\begin{corollary}
  \label{cor:unital_graph_alg_classification}
  Let \(A_1\) and \(A_2\) be unital, purely infinite graph
  \(\Cst\)\nb-algebras such that \(\Prim(A_1)\cong\Prim(A_2)\cong X\).
  Then any isomorphism \(\XK\delta(A_1)\cong\XK\delta(A_2)\)
  taking the unit class in~\(\K_0(A_1)\) to the unit class
  in~\(\K_0(A_2)\) lifts to a \Star{}isomorphism between \(A_1\)
  and~\(A_2\).
\end{corollary}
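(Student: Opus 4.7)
The plan is to strengthen Theorem~\ref{thm:graph_alg_classification} from stable to unital $\Star$\nb-isomorphism by replacing its appeal to the stable version of Kirchberg's Classification Theorem with the unital version.

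First, I would extract from the proof of Theorem~\ref{thm:graph_alg_classification} the intermediate step that produces a $\KK(X)$\nb-equivalence $\varphi\colon A_1\to A_2$ lifting the given isomorphism of invariants (via Theorems~\ref{thm:Ext_enriched_classification} and~\ref{the:obstruction_class_cone}); only afterwards does that proof invoke Kirchberg's Classification Theorem to pass to a stable $\Star$\nb-isomorphism. Since the hypothesised invariant isomorphism sends $[1_{A_1}]$ to $[1_{A_2}]$, the induced map on $\K_0$ coming from~$\varphi$ does the same.

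Second, I would invoke the unital version of Kirchberg's Classification Theorem over~$X$: a $\KK(X)$\nb-equivalence between two unital Kirchberg $X$\nb-algebras in the bootstrap class which sends the unit class to the unit class lifts to a unital $\Star$\nb-isomorphism over~$X$. Unital purely infinite graph $\Cst$\nb-algebras with finitely many ideals satisfy the required hypotheses (tightness, nuclearity, $\mathcal{O}_\infty$\nb-absorption, bootstrap class), so this applies to~$\varphi$ and yields the desired unital $\Star$\nb-isomorphism.

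The main obstacle is the passage from the stable to the unital version of Kirchberg's theorem. This is by now standard: one conjugates the stable $\Star$\nb-isomorphism $\tilde{\varphi}\colon A_1\otimes\Comp\to A_2\otimes\Comp$ produced by Theorem~\ref{thm:graph_alg_classification} by a partial isometry implementing the Murray--von Neumann equivalence between $\tilde{\varphi}(1_{A_1}\otimes e_{11})$ and $1_{A_2}\otimes e_{11}$ in $A_2\otimes\Comp$; these projections are Murray--von Neumann equivalent because they are properly infinite, full in the same distinguished subquotients of~$A_2$ (using the compatibility of~$\tilde{\varphi}$ with the $X$\nb-structure), and represent the same class in $\K_0(A_2)$ (using the unit\nb-class hypothesis). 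Restricting the conjugated isomorphism to the unital corners $1_{A_1}\otimes e_{11}\cdot(A_1\otimes\Comp)\cdot 1_{A_1}\otimes e_{11}\cong A_1$ and $1_{A_2}\otimes e_{11}\cdot(A_2\otimes\Comp)\cdot 1_{A_2}\otimes e_{11}\cong A_2$ yields the asserted unital $\Star$\nb-isomorphism.
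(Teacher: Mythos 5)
Your overall plan is the same as the paper's: obtain the strong stable classification from Theorem~\ref{thm:graph_alg_classification} and then upgrade from stable to unital. The paper does the upgrade by citing a general meta-theorem of Eilers--Restorff--Ruiz (strong stable classification implies strong unital classification once the unit class is added to the invariant), whereas you re-derive the same passage by hand with the standard partial-isometry conjugation argument. That part of your sketch is morally fine, although you should be careful that the conjugating unitary respects the $X$\nb-structure so that the resulting $\Star$\nb-isomorphism is still $X$\nb-equivariant; with the ERR citation this bookkeeping is absorbed into their theorem.

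There is, however, a real gap you do not address: you need the phrase ``taking the unit class in $\K_0(A_1)$ to the unit class in $\K_0(A_2)$'' to actually make sense as a condition on an isomorphism $\XK\delta(A_1)\cong\XK\delta(A_2)$. The invariant $\XK$ records the $\K$\nb-theory groups $\K_*\bigl(A(U_x)\bigr)$ of the ideals attached to minimal open neighbourhoods, not $\K_0(A)$ itself, so an isomorphism of $\XK\delta$ does not a priori induce a map on $\K_0$. You write ``the hypothesised invariant isomorphism sends $[1_{A_1}]$ to $[1_{A_2}]$'' and then that ``the induced map on $\K_0$ coming from $\varphi$ does the same,'' but this silently assumes $\K_0(A)$ is naturally determined by $\XK_0(A)$. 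The paper makes this explicit: for real-rank-zero $A$ the group $\K_0(A)$ is recoverable from $\XK_0(A)$ as a cokernel (citing Arklint--Bentmann--Katsura). Without that input, neither the hypothesis of the corollary nor the ERR/partial-isometry argument can get off the ground.
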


\begin{proof}
  This follows from
  \cite{Eilers-Restorff-Ruiz:Strong_class_of_ext}*{Theorem~3.3} and
  our strong classification theorem up to stable isomorphism.  Here we
  use that, when~\(A\) has real rank zero, the group~\(\K_0(A)\) can
  be naturally recovered from the module~\(\XK_0(A)\) as a certain
  cokernel, see \cite{Arklint-Bentmann-Katsura:Reduction}*{Lemma~8.3}.
\end{proof}

\begin{bibdiv}
  \begin{biblist}
\bib{Arklint-Bentmann-Katsura:Reduction}{article}{
  author={Arklint, Sara E.},
  author={Bentmann, Rasmus},
  author={Katsura, Takeshi},
  title={Reduction of filtered K-theory and a characterization of Cuntz--Krieger algebras},
  journal={J. K-Theory},
  volume={14},
  date={2014},
  number={3},
  pages={570--613},
  issn={1865-2433},
  review={\MRref{3349327}{}},
  doi={10.1017/is014009013jkt281},
}

\bib{Bates-Hong-Raeburn-Szymanski:Ideal_structure}{article}{
  author={Bates, Teresa},
  author={Hong, Jeong Hee},
  author={Raeburn, Iain},
  author={Szyma\'nski, Wojciech},
  title={The ideal structure of the $C^*$\nobreakdash -algebras of infinite graphs},
  journal={Illinois J. Math.},
  volume={46},
  date={2002},
  number={4},
  pages={1159--1176},
  issn={0019-2082},
  review={\MRref {1988256}{2004i:46105}},
  eprint={http://projecteuclid.org/euclid.ijm/1258138472},
}

\bib{Bentmann-Koehler:UCT}{article}{
  author={Bentmann, Rasmus},
  author={Köhler, Manuel},
  title={Universal coefficient theorems for \(\textup C^*\)\nobreakdash -algebras over finite topological spaces},
  status={eprint},
  note={\arxiv {1101.5702}},
  date={2011},
}

\bib{Bentmann:Real_rank_zero_and_int_cancellation}{article}{
  author={Bentmann, Rasmus},
  title={Kirchberg $X$\nobreakdash-algebras with real rank zero and intermediate cancellation},
  journal={J. Noncommut. Geom.},
  volume={8},
  date={2014},
  number={4},
  pages={1061--1081},
  issn={1661-6952},
  review={\MRref{3310939}{}},
  doi={10.4171/JNCG/178},
}

\bib{Bousfield:K_local_at_add_prime}{article}{
  author={Bousfield, Aldridge Knight},
  title={On the homotopy theory of $K$-local spectra at an odd prime},
  journal={Amer. J. Math.},
  volume={107},
  date={1985},
  number={4},
  pages={895--932},
  issn={0002-9327},
  review={\MRref {796907}{87c:55010}},
  doi={10.2307/2374361},
}

\bib{Boyle:Shift_equivalence_Jordan_from}{article}{
  author={Boyle, Mike},
  title={Shift equivalence and the Jordan form away from zero},
  journal={Ergodic Theory Dynam. Systems},
  volume={4},
  date={1984},
  number={3},
  pages={367--379},
  issn={0143-3857},
  review={\MRref {776874}{ (86i:28018)}},
  doi={10.1017/S0143385700002510},
}

\bib{Connes:NCG}{book}{
  author={Connes, Alain},
  title={Noncommutative geometry},
  publisher={Academic Press Inc.},
  place={San Diego, CA},
  date={1994},
  pages={xiv+661},
  isbn={0-12-185860-X},
  review={\MRref {1303779}{95j:46063}},
}

\bib{Cuntz:topological_Markov_chains_II}{article}{
  author={Cuntz, Joachim},
  title={A class of $C^*$\nobreakdash -algebras and topological Markov chains. II. Reducible chains and the Ext-functor for $C^*$\nobreakdash -algebras},
  journal={Invent. Math.},
  volume={63},
  date={1981},
  number={1},
  pages={25--40},
  issn={0020-9910},
  review={\MRref {608527}{82f:46073b}},
  doi={10.1007/BF01389192},
}

\bib{Dadarlat-McClure:When_are_two}{article}{
  author={D\u {a}d\u {a}rlat, Marius},
  author={McClure, James},
  title={When are two commutative $C^*$-algebras stably homotopy equivalent?},
  journal={Math. Z.},
  volume={235},
  date={2000},
  number={3},
  pages={499--523},
  issn={0025-5874},
  review={\MRref {1800209}{ (2001k:46080)}},
  doi={10.1007/s002090000143},
}

\bib{Echterhoff-Kaliszewski-Quigg-Raeburn:Naturality}{article}{
  author={Echterhoff, Siegfried},
  author={Kaliszewski, Steven},
  author={Quigg, John},
  author={Raeburn, Iain},
  title={Naturality and induced representations},
  journal={Bull. Austral. Math. Soc.},
  volume={61},
  date={2000},
  number={3},
  pages={415--438},
  issn={0004-9727},
  review={\MRref {1762638}{2001j:46101}},
  doi={10.1017/S0004972700022449},
}

\bib{Eilers-Restorff-Ruiz:Strong_class_of_ext}{article}{
  author={Eilers, S\o ren},
  author={Restorff, Gunnar},
  author={Ruiz, Efren},
  title={Strong classification of extensions of classifiable $C^*$\nobreakdash -algebras},
  note={\arxiv {1301.7695}},
  date={2013},
}

\bib{Finn-Martinez-McGovern:Global_dimension_of_f-ring}{article}{
  author={Finn, Robert T.},
  author={Martinez, Jorge},
  author={McGovern, Warren W.},
  title={The global dimension of an $f$\nobreakdash -ring via its space of minimal prime ideals},
  journal={Comm. Algebra},
  volume={25},
  date={1997},
  number={3},
  pages={905--921},
  issn={0092-7872},
  review={\MRref {1433441}{ (97m:06013)}},
  doi={10.1080/00927879708825896},
}

\bib{Gardella:Classif_circle_actions_I}{article}{
  author={Gardella, Eusebio},
  title={Classification theorems for circle actions on Kirchberg algebras, I},
  status={preprint},
  date={2014},
  note={\arxiv {1405.2469}},
}

\bib{Gardella:Classif_circle_actions_II}{article}{
  author={Gardella, Eusebio},
  title={Classification theorems for circle actions on Kirchberg algebras, II},
  status={preprint},
  date={2014},
  note={\arxiv {1406.1208}},
}

\bib{Izumi:Finite_group}{article}{
  author={Izumi, Masaki},
  title={Finite group actions on $C^*$\nobreakdash -algebras with the Rokhlin property. II},
  journal={Adv. Math.},
  volume={184},
  date={2004},
  number={1},
  pages={119--160},
  issn={0001-8708},
  review={\MRref {2047851}{2005b:46153}},
  doi={10.1016/S0001-8708(03)00140-3},
}

\bib{Kasparov:Novikov}{article}{
  author={Kasparov, Gennadi G.},
  title={Equivariant \(KK\)-theory and the Novikov conjecture},
  journal={Invent. Math.},
  volume={91},
  date={1988},
  number={1},
  pages={147--201},
  issn={0020-9910},
  review={\MRref {918241}{88j:58123}},
  doi={10.1007/BF01404917},
}

\bib{Kirchberg:Michael}{article}{
  author={Kirchberg, Eberhard},
  title={Das nicht-kommutative Michael-Auswahlprinzip und die Klassifikation nicht-einfacher Algebren},
  pages={92--141},
  book={ title={\(C^*\)-Algebras (M\"unster, 1999)}, publisher={Springer}, place={Berlin}, date={2000}, },
  review={\MRref {1796912}{2001m:46161}},
}

\bib{Kirchberg-Rordam:Infinite_absorbing}{article}{
  author={Kirchberg, Eberhard},
  author={R\o rdam, Mikael},
  title={Infinite non-simple $C^*$\nobreakdash -algebras: absorbing the Cuntz algebra~$\mathcal O_\infty $},
  journal={Adv. Math.},
  volume={167},
  date={2002},
  number={2},
  pages={195--264},
  issn={0001-8708},
  review={\MRref {1906257}{2003k:46080}},
  doi={10.1006/aima.2001.2041},
}

\bib{Koehler:Thesis}{thesis}{
  author={K\"ohler, Manuel},
  title={Universal coefficient theorems in equivariant KK-theory},
  institution={Georg-August-Universit\"at G\"ottingen},
  type={phdthesis},
  date={2010},
  eprint={http://hdl.handle.net/11858/00-1735-0000-0006-B6A9-9},
}

\bib{Lind-Marcus:Intro_to_symbolic_dynamics}{book}{
  author={Lind, Douglas},
  author={Marcus, Brian},
  title={An introduction to symbolic dynamics and coding},
  publisher={Cambridge University Press},
  place={Cambridge},
  date={1995},
  pages={xvi+495},
  isbn={0-521-55124-2},
  isbn={0-521-55900-6},
  review={\MRref {1369092}{ (97a:58050)}},
  doi={10.1017/CBO9780511626302},
}

\bib{Meyer:Homology_in_KK_II}{article}{
  author={Meyer, Ralf},
  title={Homological algebra in bivariant \(\textup K\)\nobreakdash -theory and other triangulated categories. II},
  journal={Tbil. Math. J.},
  volume={1},
  date={2008},
  pages={165--210},
  issn={1875-158X},
  review={\MRref {2563811}{2011c:19012}},
  eprint={http://www.tcms.org.ge/Journals/TMJ/Volume1/Xpapers/tmj1_9.pdf},
}

\bib{Meyer-Nest:BC}{article}{
  author={Meyer, Ralf},
  author={Nest, Ryszard},
  title={The Baum--Connes conjecture via localisation of categories},
  journal={Topology},
  volume={45},
  date={2006},
  number={2},
  pages={209--259},
  issn={0040-9383},
  review={\MRref {2193334}{2006k:19013}},
  doi={10.1016/j.top.2005.07.001},
}

\bib{Meyer-Nest:BC_Coactions}{article}{
  author={Meyer, Ralf},
  author={Nest, Ryszard},
  title={An analogue of the Baum--Connes isomorphism for coactions of compact groups},
  journal={Math. Scand.},
  volume={100},
  date={2007},
  number={2},
  pages={301--316},
  issn={0025-5521},
  review={\MRref {2339371}{2008g:19005}},
  eprint={http://www.mscand.dk/article/view/15025/13020},
}

\bib{Meyer-Nest:Bootstrap}{article}{
  author={Meyer, Ralf},
  author={Nest, Ryszard},
  title={\(C^*\)\nobreakdash -Algebras over topological spaces: the bootstrap class},
  journal={M\"unster J. Math.},
  volume={2},
  date={2009},
  pages={215--252},
  issn={1867-5778},
  review={\MRref {2545613}{2011a:46105}},
  eprint={http://nbn-resolving.de/urn:nbn:de:hbz:6-10569452982},
}

\bib{Meyer-Nest:Homology_in_KK}{article}{
  author={Meyer, Ralf},
  author={Nest, Ryszard},
  title={Homological algebra in bivariant $K$-theory and other triangulated categories. I},
  conference={ title={Triangulated categories}, },
  book={ series={London Math. Soc. Lecture Note Ser.}, editor={Holm, Thorsten}, editor={J\o rgensen, Peter}, editor={Rouqier, Rapha\"el}, volume={375}, publisher={Cambridge Univ. Press}, place={Cambridge}, },
  date={2010},
  pages={236--289},
  review={\MRref {2681710}{}},
  doi={10.1017/CBO9781139107075.006},
}

\bib{Meyer-Nest:Filtrated_K}{article}{
  author={Meyer, Ralf},
  author={Nest, Ryszard},
  title={\(C^*\)\nobreakdash -Algebras over topological spaces: filtrated \(\textup K\)\nobreakdash -theory},
  journal={Canad. J. Math.},
  volume={64},
  pages={368--408},
  date={2012},
  review={\MRref {2953205}{}},
  doi={10.4153/CJM-2011-061-x},
}

\bib{Nekrashevych:Cstar_selfsimilar}{article}{
  author={Nekrashevych, Volodymyr},
  title={$C^*$\nobreakdash -algebras and self-similar groups},
  journal={J. Reine Angew. Math.},
  volume={630},
  date={2009},
  pages={59--123},
  issn={0075-4102},
  review={\MRref {2526786}{2010i:46106}},
  doi={10.1515/CRELLE.2009.035},
}

\bib{Pask-Raeburn:K-Theory_of_CKA}{article}{
  author={Pask, David},
  author={Raeburn, Iain},
  title={On the $K$\nobreakdash -theory of Cuntz-Krieger algebras},
  journal={Publ. Res. Inst. Math. Sci.},
  volume={32},
  date={1996},
  number={3},
  pages={415--443},
  issn={0034-5318},
  review={\MRref {1409796}{97m:46111}},
  doi={10.2977/prims/1195162850},
}

\bib{Phillips:Freeness_actions_finite_groups}{article}{
  author={Phillips, N. {Ch}ristopher},
  title={Freeness of actions of finite groups on $C^*$\nobreakdash -algebras},
  book={ title={Operator structures and dynamical systems}, series={Contemp. Math.}, volume={503}, publisher={Amer. Math. Soc.}, place={Providence, RI}, date={2009}, },
  pages={217--257},
  doi={10.1090/conm/503/09902},
  review={\MRref {2590625}{2012f:46143}},
}

\bib{Raeburn-Szymanski:CK_algs_of_inf_graphs_and_matrices}{article}{
  author={Raeburn, Iain},
  author={Szymanski, Wojciech},
  title={Cuntz-Krieger algebras of infinite graphs and matrices},
  journal={Trans. Amer. Math. Soc.},
  volume={356},
  date={2004},
  pages={39-59},
  issn={0002-9947},
  doi={10.1090/S0002-9947-03-03341-5},
  review={\MRref {2020023}{}},
}

\bib{Restorff:Classification}{article}{
  author={Restorff, Gunnar},
  title={Classification of Cuntz--Krieger algebras up to stable isomorphism},
  journal={J. Reine Angew. Math.},
  volume={598},
  date={2006},
  pages={185--210},
  issn={0075-4102},
  review={\MRref {2270572}{2007m:46090}},
  doi={10.1515/CRELLE.2006.074},
}

\bib{Rosenberg-Schochet:Kunneth}{article}{
  author={Rosenberg, Jonathan},
  author={Schochet, Claude},
  title={The K\"unneth theorem and the universal coefficient theorem for equivariant \(\textup K\)\nobreakdash-theory and \(\textup{KK}\)-theory},
  journal={Mem. Amer. Math. Soc.},
  volume={62},
  date={1986},
  number={348},
  issn={0065-9266},
  review={\MRref{0849938}{87k:46147}},
  doi={10.1090/memo/0348},
}

\bib{Rordam:Classification_extensions}{article}{
  author={R\o rdam, Mikael},
  title={Classification of extensions of certain \(C^*\)\nobreakdash -algebras by their six term exact sequences in \(K\)\nobreakdash -theory},
  journal={Math. Ann.},
  volume={308},
  date={1997},
  number={1},
  pages={93--117},
  issn={0025-5831},
  doi={10.1007/s002080050067},
  review={\MRref {1446202}{99b:46108}},
}

\bib{Steinberg:Theorem_of_Pittie}{article}{
  author={Steinberg, Robert},
  title={On a theorem of Pittie},
  journal={Topology},
  volume={14},
  date={1975},
  pages={173--177},
  issn={0040-9383},
  review={\MRref {0372897}{51\,\#9101}},
  doi={10.1016/0040-9383(75)90025-7},
}

\bib{Thom:Thesis}{thesis}{
  author={Thom, Andreas},
  title={Connective E-theory and bivariant homology},
  institution={Univ. M\"unster},
  type={phdthesis},
  date={2003},
  eprint={http://nbn-resolving.de/urn:nbn:de:hbz:6-85659543776},
}

\bib{Wolbert:Classifying_K-modules}{article}{
  author={Wolbert, Jerome J.},
  title={Classifying modules over $K$-theory spectra},
  journal={J. Pure Appl. Algebra},
  volume={124},
  date={1998},
  number={1-3},
  pages={289--323},
  issn={0022-4049},
  review={\MRref {1600317}{99e:55013}},
  doi={10.1016/S0022-4049(96)00112-0},
}
  \end{biblist}
\end{bibdiv}
\end{document}